\title[Spacelike Singularity]{Curvature blow-up rates in spherically symmetric gravitational collapse to a Schwarzschild black hole
}
\date{\today}
\author{Xinliang An$^*$$^1$}\author{Dejan Gajic$^{\dag}$$^2$}
\address{$^{1}$\small Department of Mathematics, National University of Singapore, 
10 Lower Kent Ridge Road, Singapore, 119076 }
\address{$^{2}$\small University of Cambridge, Department of Pure Mathematics and Mathematical Statistics, Wilberforce Road, Cambridge CB3 0WB, United Kingdom }
\email{$^*$matax@nus.edu.sg}
\email{$^{\dag}$d.gajic@dpmms.cam.ac.uk}
\newtheorem{lemma}{Lemma}[section]
\newtheorem{proposition}[lemma]{Proposition}
\newtheorem{theorem}[lemma]{Theorem}
\newtheorem{remark}{Remark}
\numberwithin{equation}{section}
\newtheorem{thmx}{Theorem}
\begin{document}

\newcommand{\ub}{\underline{u}}
\newcommand{\Cb}{\underline{C}}
\newcommand{\Lb}{\underline{L}}
\newcommand{\Lh}{\hat{L}}
\newcommand{\Lbh}{\hat{\Lb}}
\newcommand{\phib}{\underline{\phi}}
\newcommand{\Phib}{\underline{\Phi}}
\newcommand{\Db}{\underline{D}}
\newcommand{\Dh}{\hat{D}}
\newcommand{\Dbh}{\hat{\Db}}
\newcommand{\omb}{\underline{\omega}}
\newcommand{\omh}{\hat{\omega}}
\newcommand{\ombh}{\hat{\omb}}
\newcommand{\Pb}{\underline{P}}
\newcommand{\chib}{\underline{\chi}}
\newcommand{\chih}{\hat{\chi}}
\newcommand{\chibh}{\hat{\chib}}

\newcommand{\alb}{\underline{\alpha}}
\newcommand{\zeb}{\underline{\zeta}}
\newcommand{\beb}{\underline{\beta}}
\newcommand{\etb}{\underline{\eta}}
\newcommand{\Mb}{\underline{M}}
\newcommand{\oth}{\hat{\otimes}}


\def\a {\alpha}
\def\b {\beta}
\def\ab {\alphab}
\def\bb {\betab}
\def\nab {\nabla}


\def\ub {\underline{u}}
\def\th {\theta}
\def\Lb {\underline{L}}
\def\Hb {\underline{H}}
\def\chib {\underline{\chi}}
\def\chih {\hat{\chi}}
\def\chibh {\hat{\underline{\chi}}}
\def\omegab {\underline{\omega}}
\def\etab {\underline{\eta}}
\def\betab {\underline{\beta}}
\def\alphab {\underline{\alpha}}
\def\Psib {\underline{\Psi}}
\def\hot{\widehat{\otimes}}
\def\Phib {\underline{\Phi}}
\def\thb {\underline{\theta}}
\def\t {\tilde}
\def\st {\tilde{s}}

\def\rhoc{\check{\rho}}
\def\sigmac{\check{\sigma}}
\def\Psic{\check{\Psi}}
\def\kappab{\underline{\kappa}}
\def\betabc {\check{\underline{\beta}}}

\def\d {\delta}
\def\f {\frac}
\def\i {\infty}
\def\l {\bigg(}
\def\r {\bigg)}
\def\S {S_{u,\underline{u}}}
\def\o{\omega}
\def\O{\Omega}\
\def\be{\begin{equation}\begin{split}}
\def\en{\end{split}\end{equation}}
\def\at{a^{\frac{1}{2}}}
\def\af{a^{\frac{1}{4}}}
\def\od{\omega^{\dagger}}
\def\ombd{\underline{\omega}^{\dagger}}
\def\K{K-\frac{1}{|u|^2}}
\def\ut{\frac{1}{|u|^2}}
\def\Kb{K-\frac{1}{(u+\underline{u})^2}}
\def\bf{b^{\frac{1}{4}}}
\def\bt{b^{\frac{1}{2}}}
\def\de{\delta}
\def\ls{\lesssim}
\def\om{\omega}
\def\Om{\Omega}
\def\Orw{O(r_1^{\f{1}{100}})}
\def\Ort{O(r_2^{\f{1}{100}})}
\def\Or{O(r^{\f{1}{100}})}

\newcommand{\e}{\epsilon}
\newcommand{\et} {\frac{\epsilon}{2}}
\newcommand{\ef} {\frac{\epsilon}{4}}
\newcommand{\LH} {L^2(H_u)}
\newcommand{\LHb} {L^2(\underline{H}_{\underline{u}})}
\newcommand{\M} {\mathcal}
\newcommand{\TM} {\tilde{\mathcal}}
\newcommand{\p}{\psi\hspace{1pt}}
\newcommand{\q}{\underline{\psi}\hspace{1pt}}
\newcommand{\Li}{_{L^{\infty}(S_{u,\underline{u}})}}
\newcommand{\Lt}{_{L^{2}(S)}}
\newcommand{\da}{\delta^{-\frac{\epsilon}{2}}}
\newcommand{\db}{\delta^{1-\frac{\epsilon}{2}}}
\newcommand{\D}{\Delta}
\newcommand{\snabla}{\slashed{\nabla}}
\newcommand{\sg}{\slashed{g}}
\newcommand{\sD}{\slashed{{\Delta}}}
\newcommand{\R}{\mathbb{R}}
\newcommand{\s}{\mathbb{S}}
\newcommand{\C}{\mathbb{C}}
\newcommand{\Q}{\mathbb{Q}}
\newcommand{\Z}{\mathbb{Z}}
\newcommand{\N}{\mathbb{N}}
\newcommand{\T}{\mathbf{T}}


\renewcommand{\div}{\mbox{div }}
\newcommand{\curl}{\mbox{curl }}
\newcommand{\trchb}{\mbox{tr} \chib}
\def\trch{\mbox{tr}\chi}
\newcommand{\tr}{\mbox{tr}}

\newcommand{\Ls}{{\mathcal L} \mkern-10mu /\,}
\newcommand{\eps}{{\epsilon} \mkern-8mu /\,}

\newcommand{\xib}{\underline{\xi}}
\newcommand{\psib}{\underline{\psi}}
\newcommand{\rhob}{\underline{\rho}}
\newcommand{\thetab}{\underline{\theta}}
\newcommand{\gammab}{\underline{\gamma}}
\newcommand{\nub}{\underline{\nu}}
\newcommand{\lb}{\underline{l}}
\newcommand{\mub}{\underline{\mu}}
\newcommand{\Xib}{\underline{\Xi}}
\newcommand{\Thetab}{\underline{\Theta}}
\newcommand{\Lambdab}{\underline{\Lambda}}
\newcommand{\vphb}{\underline{\varphi}}

\newcommand{\ih}{\hat{i}}

\newcommand{\tcL}{\widetilde{\mathscr{L}}}

\newcommand{\sRic}{Ric\mkern-19mu /\,\,\,\,}
\newcommand{\sL}{{\cal L}\mkern-10mu /}
\newcommand{\sLh}{\hat{\sL}}
\newcommand{\seps}{\epsilon\mkern-8mu /}
\newcommand{\sd}{d\mkern-10mu /}
\newcommand{\sR}{R\mkern-10mu /}
\newcommand{\snab}{\nabla\mkern-13mu /}
\newcommand{\sdiv}{\mbox{div}\mkern-19mu /\,\,\,\,}
\newcommand{\scurl}{\mbox{curl}\mkern-19mu /\,\,\,\,}
\newcommand{\slap}{\mbox{$\triangle  \mkern-13mu / \,$}}
\newcommand{\sGamma}{\Gamma\mkern-10mu /}
\newcommand{\somega}{\omega\mkern-10mu /}
\newcommand{\somb}{\omb\mkern-10mu /}
\newcommand{\spi}{\pi\mkern-10mu /}
\newcommand{\sJ}{J\mkern-10mu /}
\renewcommand{\sp}{p\mkern-9mu /}
\newcommand{\su}{u\mkern-8mu /}

\def\be{\begin{equation}}
\def\ee{\end{equation}}
\def\bes{\begin{equation*}}
\def\ees{\end{equation*}}
\newcommand\ba{\begin{align}}
\newcommand\ea{\end{align}}
\newcommand\bas{\begin{align*}}
\newcommand\eas{\end{align*}} 
\def\Orw{O\big((M^{-1}r_1)^{\f{1}{100}}\big)}
\def\Ort{O\big((M^{-1}r_2)^{\f{1}{100}}\big)}
\def\Or{O\big((M^{-1}r)^{\f{1}{100}}\big)}
\def\ba{\begin{align}}
\def\ea{\end{align}}
\def\bas{\begin{align*}}
\def\eas{\end{align*}}

\begin{abstract}
We study the black hole interiors of spacetimes arising from gravitational collapse in the spherically symmetric Einstein--scalar field setting, and we investigate the precise blow-up rates of curvature and mass at the spacelike singularity near timelike infinity. We show in particular that the Kretschmann scalar blows up faster than in the Schwarzschild setting, due to mass inflation. Moreover, the blow-up rate is not constant and converges to the Schwarzschild rate towards timelike infinity and it depends on the precise late-time polynomial behaviour of the scalar field along the event horizon. This indicates a new blow-up phenomenon, driven by a PDE mechanism, rather than an ODE mechanism. 
\end{abstract}

\maketitle

\tableofcontents

\section{Introduction}

\subsection{Background}
A central feature of the celebrated Schwarzschild spacetime solutions \cite{schw1916} to the vacuum Einstein equations
\begin{equation*}
\mbox{Ric}_{\mu\nu}[g]=0,
\end{equation*}
is the presence of a black hole region which is bounded to the future by a \emph{spacelike singularity} with area radius $r=0$. The Kretschmann scalar of the Schwarzschild solutions satisfies the following equation:
\begin{equation*}
R_{\alpha \beta \gamma \delta}R^{\alpha \beta \gamma \delta}= \frac{48M^2}{r^6},
\end{equation*}
where $M>0$ is the Schwarzschild mass. An immediate corollary of the blow-up of the Kretschmann scalar as $r\downarrow 0$ is that the metric cannot be extended in $C^2$ across the singularity.\footnote{In fact, by \cite{sbierski18}, it cannot even be extended in $C^0$. }

Singularities of dynamical black holes were investigated by Christodoulou in a series of papers \cite{DC91, DC93, DC94, DC99} in the setting of the Einstein-scalar field system:
\begin{equation}\label{ES}
\begin{split}
&\mbox{Ric}_{\mu\nu}-\f12Rg_{\mu\nu}=2T_{\mu\nu},\\
&T_{\mu\nu}=\partial_{\mu}\phi \partial_{\nu}\phi-\f12g_{\mu\nu}\partial^{\sigma}\phi \partial_{\sigma}\phi,
\end{split}
\end{equation}
under the restriction of spherical symmetry.

Christodoulou {\color{black}showed} in the spherically symmetric setting that the maximal globally hyperbolic Cauchy development of generic initial data, with respect to an appropriate choice of norm, contain a future-complete null infinity and are future $C^0$-inextendible as spherically symmetric Lorentzian manifolds, affirming thus both the Weak and Strong Cosmic Censorship conjectures in this spherically symmetric setting; see for example \cite{dl-scc} for modern statements of these conjectures and a discussion of their history. Christodoulou demonstrated moreover that for suitably \emph{large} initial data, the corresponding spacetime solutions will have a black hole region, bounded to the future by a spacelike singularity $\mathcal{S}$ where the area radius of the round 2-spheres foliating the spacetime goes to zero and the Kretschmann scalar blows up. See Figure \ref{fig:chrBHs} for a Penrose diagrammatic representation of such black hole solutions.
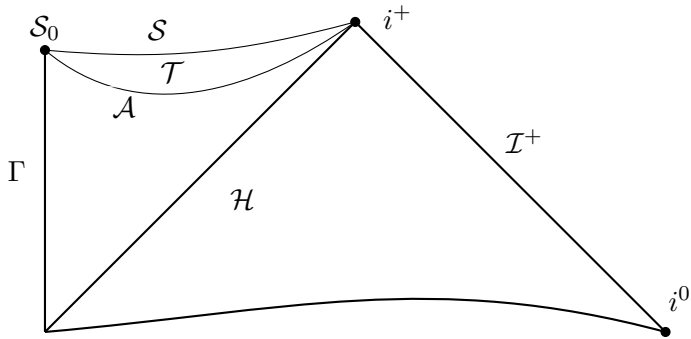
\begin{figure}[H]
\begin{center}
\begin{minipage}[!t]{0.4\textwidth}
\begin{tikzpicture}[scale=0.75]
\draw [white](-1, -2.5)-- node[midway, sloped, above,black]{$\Gamma$}(0, -2.5);
\draw [white](0, 0)-- node[midway, sloped, above,black]{$\mathcal{S}$}(4, 0);
\draw [white](0, -0.75)-- node[midway, sloped, above,black]{$\mathcal{T}$}(4.5, -0.75);
\draw [white](-1, 0)-- node[midway, sloped, above,black]{$\mathcal{S}_0$}(1, 0);
\draw [white](5.5, 0.2)-- node[midway, sloped, above,black]{$i^+$}(7, 0.2);
\draw [white](10, -4.8)-- node[midway, sloped, above,black]{$i^0$}(12.5, -4.8);
\draw (0,0) to [out=-5, in=195] (5.5, 0.5);
\draw (0,0) to [out=-40, in=215] (5.5, 0.5);
\draw [white](0, -3)-- node[midway, sloped, above,black]{$\mathcal{H}$}(7, -3);
\draw [white](7, -2)-- node[midway, sloped, above,black]{$\mathcal{I^+}$}(10, -2);
\draw [white](0, -0.65)-- node[midway, sloped, below,black]{$\mathcal{A}$}(2.8, -0.65);

\draw [thick] (0, -5)--(0,0);
\draw [thick] (5.5, 0.5)--(0,-5);
\draw[fill] (0,0) circle [radius=0.08];
\draw[fill] (5.5, 0.5) circle [radius=0.08];
\draw[fill] (11, -5) circle [radius=0.08];
\draw [thick] (5.5, 0.5)--(11,-5);
\draw [thick] (0,-5) to [out=5, in=165] (11, -5);
\end{tikzpicture}
\end{minipage}
\begin{minipage}[!t]{0.6\textwidth}
\end{minipage}
\hspace{0.05\textwidth}
\end{center}
\vspace{-0.2cm}
\caption{A Penrose diagram of the dynamical black hole solutions constructed by Christodoulou, with  $\Gamma$ denoting the center of spherical symmetry, future null infinity $\mathcal{I}^+$, event horizon $\mathcal{H}$ and spacelike singularity $\mathcal{S}$. The trapped region $\mathcal{T}$ is foliated by trapped spheres, $\mathcal{A}$ denotes apparent horizon, which is a hypersurface foliated by marginally trapped spheres and the point $S_0$ denotes the first singularity.}
	\label{fig:chrBHs}
\end{figure}
 
In the present paper, we investigate quantitatively the precise interior dynamics of black hole spacetimes constructed by Christodoulou. Assuming both quantitative inverse polynomial \emph{upper bounds} on the convergence of the black hole exterior to Schwarzschild (obtained in \cite{MDIR05}) and inverse polynomial \emph{lower bounds} (obtained in a linearized setting in \cite{aagprice}), \textbf{we establish blow-up of the Hawking mass at the spacelike singularity, resulting in a blow-up of the Kretschmann that is \emph{stronger} than in the non-dynamical Schwarzschild setting} and depends on the precise late-time behaviour of the scalar field along the event horizon. Along the way, \textbf{we show that appropriately renormalized dynamical quantities converge to their Schwarzschild values as we approach future timelike infinity in the black hole interior.}

\subsection{Lower bounds for the Kretschmann scalar from monotonicity}

In this section, we provide a sketch of the argument provided in \cite{DC91} for establishing blow-up of the Kretschmann scalar at the spacetime singularity. We can express the relevant spacetime metric $g$ in double null coordinates:
\begin{equation}\label{metric}
g=-\Omega^2(u,v)dudv+r^2(u,v)\big(d\theta^2+\sin^2\theta d\phi^2\big),
\end{equation}
where the level sets of $u$ and $v$ correspond to outgoing and ingoing null hypersurfaces, respectively; see Figure \ref{fig:DNintro} for an illustration.

\begin{figure}[H]
\begin{minipage}[!t]{0.4\textwidth}
	\begin{tikzpicture}[scale=0.9]
	\node[] at (1.25,3.25) {\LARGE $\mathcal{D}$};
	\begin{scope}[thick]
	\draw[->] (0,0) node[anchor=north]{$(u_0, v_0)$} -- (0,5)node[anchor = east]{$\Gamma$};
	\draw[->] (0,0) --node[anchor = north]{$v$} (3,3);
	\draw[->] (1.75,1.75) node[anchor=west]{$(u_0,v_1)$} --node[anchor=north]{$u$} (0,3.5)node[anchor = east]{$(u_1, v_1)$};
	\draw[->] (2.75,2.75) node[anchor=west]{$(u_0,v_2)$} -- (1,4.5);
	\end{scope}
	\begin{scope}[gray]
	\draw (2,2) -- (0.25,3.75);
	\draw (2.25,2.25) -- (0.5,4);
	\draw (2.5,2.5) -- (0.75,4.25);
	\draw(1.5,2) -- (2.5,3);
	\draw(1.25,2.25) -- (2.25,3.25);
	\draw(1,2.5) -- (2,3.5);
	\draw(0.75,2.75) -- (1.75,3.75);
	\draw(0.5,3) -- (1.5,4);
	\draw(0.25,3.25) -- (1.25,4.25);
	\draw(0,3.5) -- (1,4.5);
	\end{scope}
	\end{tikzpicture}
\end{minipage}
\caption{The double null foliation of the spacetime, with each point $(u,v)$ representing a $2$-sphere $S_{u,v}$ with area radius $r(u,v)$, and $\Gamma$ denoting the center of spherical symmetry.}
	\label{fig:DNintro}
\end{figure}
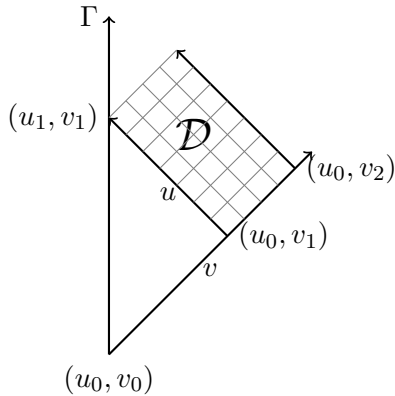

{\color{black} The Hawking mass is defined} as the quantity
\begin{equation*}
m(u,v)=\f{r}{2}\left(1+4\Omega^{-2}\partial_u r \partial_v r\right).
\end{equation*}

In \cite{DC91}, the following three properties are observed:
\begin{itemize}
\item[1)]  \begin{equation}\label{mass inequality}
R_{\a\b\gamma\delta}R^{\a\b\gamma\delta}(u,v)\geq \f{32\, m^2(u,v)}{r^6(u,v)}.
\end{equation}
\item [2)] The Hawking mass $m$ is monotonic in the trapped region $\mathcal{T}$: $\partial_u m(u,v)\geq 0$.
\item [3)] The apparent horizon $\mathcal{A}$ consists of points $(u_{\mathcal{A}} (v),v)$ such that $\partial_v r(u_{\mathcal{A}} (v),v)=0$ and hence $m(u_{\mathcal{A}} (v),v)=\frac{1}{2} r(u_{\mathcal{A}} (v),v)$.
\end{itemize}

From these properties, one can easily deduce the following uniform lower bound in $\mathcal{T}$:
\begin{equation}
\label{eq:prelimlowboundkretsch}
R_{\a\b\gamma\delta}R^{\a\b\gamma\delta}(u,v) \geq 8\frac{r^2(u_{\mathcal{A}}(v),v)}{r^6(u,v)}.
\end{equation}

It follows that the Kretschmann scalar blows up \emph{at least} as fast as $r^{-6}$ as we approach $\mathcal{S}$ along hypersurfaces of constant $v$. {\color{black}Recall} that in Schwarzschild, the $r^{-6}$ rate in the above lower bound is in fact {\color{black}precisely} attained.

\subsection{Polynomial upper bounds for the Kretschmann scalar} Following the arguments in the proof of {\color{black}the} \underline{qualitative} \textit{extension principle}\footnote{{\color{black}This} states that for characteristic initial data prescribed on initial incoming and outgoing hypersurfaces  {\color{black}{with}} $r\geq \e>0$, local existence towards the future can be proved {\color{black}for (\ref{ES})}.} in \cite{DC93}, fixing the final Bondi mass $M$ to be $1$ for notational convenience, one can prove that at $\mathcal{S}$:
$$R^{\alpha\beta\gamma\delta}R_{\alpha\beta\gamma\delta}(u,v)\lesssim \exp\big(\exp(\f{1}{r(u,v)})\big).$$
The extension principle proved in \cite{DC93} is qualitative in nature. To get a sharper upper bound, one needs to adapt a different approach and needs to improve all the estimates into \underline{quantitive, sharp} estimates. 

Recent work of the first author and Zhang \cite{AZ}, shows that the above \emph{double-exponential} upper bounds can be improved to \emph{polynomial} upper bounds (see Figure \ref{fig:anzhang}): 
\begin{thmx}[\cite{AZ}]
Consider the trapped region of dynamical black hole spacetimes of Christodoulou \cite{DC91}, solving (\ref{ES}) and let $(u_0,v_0)$ denote a point along the space singularity $\mathcal{S}$, away from the first singularity $\mathcal{S}_0$ and let $M=1$ denote the final Bondi mass. Then:
\begin{enumerate}[(i)]
\item There exist positive numbers $D_1$ and $D_2$, such that
 \begin{equation}
\label{eq:dphinearr0}
r^2(u_0,v_0)|\partial_u \phi|(u_0,v_0)\leq D_1, \quad \quad r^2(u_0,v_0)|\partial_v \phi|(u_0,v_0)\leq D_2.
\end{equation}
\item For $r_0>0$ suitably small, there exists a positive number $N_{u_0,v_0}$, such that 
\begin{equation}
\label{eq:upboundanzhang}
R^{\alpha\beta\gamma\delta}R_{\alpha\beta\gamma\delta}(u,v)\lesssim r(u,v)^{-N_{u_0,v_0}}
\end{equation}
for all $(u,v)$ lie both in the causal past of $(u_0,v_0)$  and the causal future of $\{r=r_0\}$.
\end{enumerate}
The values of $D_1, D_2$ and $N_{u_0,v_0}$ depend on the values of $\partial_u\phi$ and $\partial_v\phi$ in the causal past of $(u_0,v_0)$ along the hypersurface $\{r=r_0\}$.
\end{thmx}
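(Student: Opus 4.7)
The plan is a backwards bootstrap argument in the rectangle $R := [u_{*},u_0]\times[v_{*},v_0]$ lying in the causal past of $(u_0,v_0)$ and in the causal future of $\{r=r_0\}$, using scale-invariant renormalised field quantities together with the sign structure of the trapped region $\mathcal{T}$. In the double-null gauge \eqref{metric}, I would write the system \eqref{ES} as evolution equations for $r$, $\Omega^2$, $\phi$ and the Hawking mass $m$: the Raychaudhuri equations take the form $\partial_u(\Omega^{-2}\partial_u r)=-r\Omega^{-2}(\partial_u\phi)^2$ (and the mirror one), and the wave equation for $\phi$ rewrites cleanly as
\[
\partial_v\zeta=-\tfrac{\partial_u r}{r}\,\theta,\qquad \partial_u\theta=-\tfrac{\partial_v r}{r}\,\zeta,
\]
where $\zeta:=r\,\partial_u\phi$ and $\theta:=r\,\partial_v\phi$. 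In $\mathcal{T}$ one has $\partial_u r,\partial_v r<0$, $m$ is monotone, and $m=\tfrac12 r$ on $\mathcal{A}$.

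For part (i), I would study the sharper quantities $r\zeta=r^2\partial_u\phi$ and $r\theta=r^2\partial_v\phi$, which obey
\[
\partial_v(r\zeta)=\partial_v r\cdot\zeta-\partial_u r\cdot\theta,\qquad \partial_u(r\theta)=\partial_u r\cdot\theta-\partial_v r\cdot\zeta,
\]
and run a continuity argument on $R$. Initial values for $|r\zeta|,|r\theta|$ along $\{r=r_0\}$ are bounded using the qualitative extension principle of \cite{DC93}, which supplies finite seed data. Inside $R$ one then postulates an a priori bound $\sup_R(|r\zeta|+|r\theta|)\le A$, integrates the transport equations along the good characteristic direction so that the signs of $\partial_u r$, $\partial_v r$ in $\mathcal{T}$ yield sign-definite contributions, and closes with Gr\"onwall using pointwise control on $\Omega^{-2}|\partial_u r|$ and $\Omega^{-2}|\partial_v r|$ extracted from the Raychaudhuri equations. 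This produces \eqref{eq:dphinearr0} with $D_1, D_2$ depending only on the field data along $\{r=r_0\}\cap J^-(u_0,v_0)$.

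For part (ii), I would use the spherically symmetric decomposition
\[
R_{\alpha\beta\gamma\delta}R^{\alpha\beta\gamma\delta}=\frac{48 m^2}{r^6}+F(\partial\phi),
\]
where $F$ is quadratic in $\partial_u\phi,\partial_v\phi$ with coefficients controlled by $\Omega^{-2}|\partial_u r\,\partial_v r|\,r^{-2}$. Part (i) yields $F\lesssim r^{-8}$, so it suffices to control $m$. The Hawking mass obeys $\partial_v m = 2\,r^2(\partial_v\phi)^2\cdot(-\Omega^{-2}\partial_u r)$ in $\mathcal{T}$; using \eqref{eq:dphinearr0} and the bounds on $|\partial_u r|/\Omega^2$, $|\partial_v r|/\Omega^2$ obtained in the previous step, one gets $\partial_v m \lesssim r^{-C}|\partial_v r|$ for some $C>0$. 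Changing variable $dv\mapsto dr/\partial_v r$ and integrating from $\{r=r_0\}$ (where $m$ is finite) down to $(u,v)$ gives $m\lesssim r^{-C'}$, which inserted in the formula above yields \eqref{eq:upboundanzhang} with $N_{u_0,v_0}=6+2C'$.

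The hard part is closing the bootstrap in part (i) with constants that do not deteriorate as the foot point approaches $\mathcal{S}$. A naive Gr\"onwall on the $(\zeta,\theta)$ system produces $|\log r|$ factors, because $\int |\partial_u r|/r\,du\sim |\log r|$ in the trapped region. To avoid this, I would exploit the coupling structure of the transport equations in $\mathcal{T}$, the monotonicity $\partial_u m\ge 0$, and the identity $m=r/2$ on $\mathcal{A}$, so that the $|\log r|$ pieces organise into sign-definite integrals that can be absorbed. This is exactly the place where Christodoulou's \emph{qualitative} extension principle from \cite{DC93} must be upgraded to a \emph{quantitative, sharp} statement, and I expect it to be the most delicate step of the argument.
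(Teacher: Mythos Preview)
This theorem is quoted from \cite{AZ} and is not proved in the present paper; however, a global version of exactly this argument is carried out here in Proposition~\ref{phi} (for part (i)) and Proposition~\ref{thm 8.1} (for part (ii)), so your proposal can be compared against those.

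Your bootstrap for part (i) has a genuine gap. You correctly observe that integrating the $(\zeta,\theta)$ system along characteristics produces a factor $\int |\partial r|/r \sim |\log r|$, but you misread this as an obstruction to be removed by sign considerations. In fact this $|\log r|$ is precisely the mechanism: applied to $\zeta=r\partial_u\phi$ and $\theta=r\partial_v\phi$ themselves (not to $r\zeta,r\theta$), Gr\"onwall with $\int_{\tilde r}^{r_0} r'^{-1}\,dr'=\log(r_0/\tilde r)$ in the exponent gives $|\zeta|,|\theta|\lesssim (r_0/r)\cdot \sup_{r=r_0}(|\zeta|+|\theta|)$, which is exactly the statement that $r^2|\partial\phi|$ is bounded. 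The way this is organised in \cite{AZ} (and in Proposition~\ref{phi} here) is to set $\tilde\Psi(s)=\max\bigl\{\sup_{r=s}|C_2\, r\partial_u\phi|,\ \sup_{r=s}|C_1\, r\partial_v\phi|\bigr\}$ over constant-$r$ level sets in $J^-(\tilde q)$, change the integration variable from $u$ (or $v$) to $r$ using $|\partial_v r/\partial_u r|=1+O(r^{1/100})$, and obtain the scalar inequality $\tilde\Psi(\tilde r)\le \tilde\Psi(r_0)+\int_{\tilde r}^{r_0}\frac{1+h(r')}{r'}\tilde\Psi(r')\,dr'$. Your attempt to close directly on $r\zeta,r\theta$ cannot work because the right-hand sides of your transport identities carry $\zeta,\theta$, not $r\zeta,r\theta$; absorbing the resulting $A|\log r|$ back into the bootstrap constant $A$ is impossible, and there is no sign-definite cancellation available (the signs of $\zeta,\theta$ are uncontrolled). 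The level-set maximum together with the change of variable to $r$ is the essential device you are missing.

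For part (ii), your decomposition $R_{\alpha\beta\gamma\delta}R^{\alpha\beta\gamma\delta}=48m^2/r^6+F(\partial\phi)$ is too coarse: in this gauge the Kretschmann scalar involves $\partial_u\partial_v r$, $\partial_u^2 r$, $\partial_v^2 r$, $\partial_u\log\Omega$, $\partial_v\log\Omega$ and $\partial_u\partial_v\Omega$ (see the explicit expression in the proof of Proposition~\ref{thm 8.1}). One therefore needs polynomial-in-$r^{-1}$ control on $r\Omega^2$, on $\partial_u\log(r\Omega^2)$, $\partial_v\log(r\Omega^2)$, and on second derivatives; these come from the renormalised equation \eqref{propeq:Omegarescale1} together with part (i). A bound on $m$ via $\partial_v m$ alone does not supply these.
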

\begin{remark}
The bounds \eqref{eq:upboundanzhang} and \eqref{eq:prelimlowboundkretsch} together imply that the blow-up of the Kretschmann scalar is bounded above and below by inverse polynomials in $r$. 
\end{remark}
\begin{remark}
The powers in $r$ appearing in \eqref{eq:dphinearr0} are sharp and are crucial for establishing \eqref{eq:upboundanzhang}.
\end{remark}
\begin{figure}[H]
	\begin{center}
\includegraphics[scale=0.75]{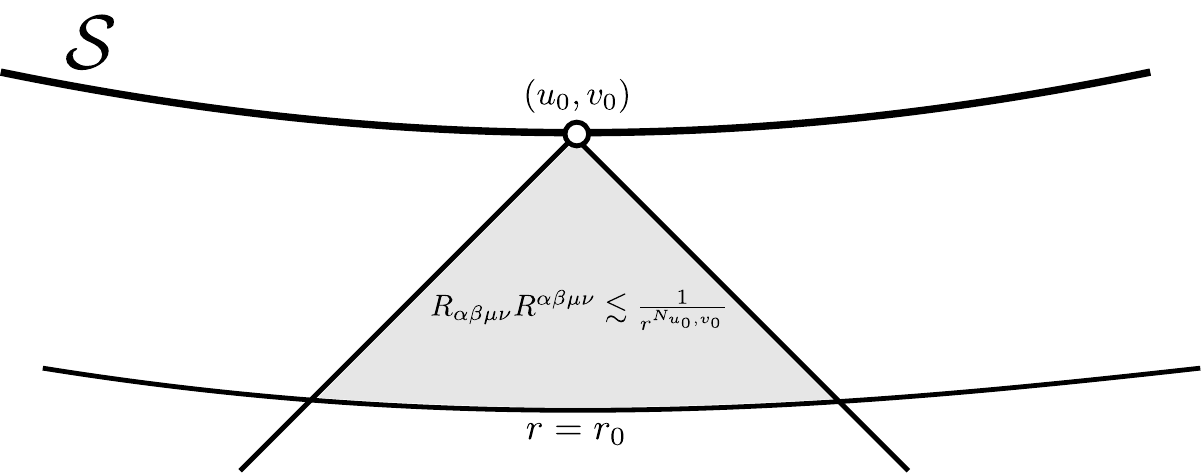}
\end{center}
\vspace{-0.2cm}
\caption{Local polynomial upper bound for Kretschmann scalar at $\mathcal{S}$.}
	\label{fig:anzhang}
\end{figure}

\subsection{Polynomial tails, mass inflation and precise blow-up of the Kretschmann scalar} In this section, we present a rough version of the main theorem of the present paper.

\begin{theorem}[Rough version of Theorem \ref{thm:precisemain}]
\label{main thm}
Consider the dynamical black hole spacetimes of Christodoulou \cite{DC91} with final Bondi mass $M=1$. Assume the following asymptotic scalar field behaviour along the event horizon $\mathcal{H}$:
\begin{equation}
\label{eq:priceassm}
D_1v^{-q} \leq \partial_v\phi|_{\mathcal{H}} \leq D_2 v^{-p},
\end{equation}
for some {\color{black}$1<p\leq q<{\color{black}3p-1}$}.

Then there exist positive numerical constants $0<\rho<\sigma$, with $\rho$ \underline{independent} and $\sigma$ dependent on {\color{black}the bounds in \eqref{eq:priceassm}}, so that we can estimate
\begin{align}
\liminf_{r\downarrow 0} \left[r^{6+\rho D_1^2v^{-2q} }R^{\a\b\mu\nu}R_{\a\b\mu\nu}\right](v,r)\gtrsim &\: 1,\\
\limsup_{r\downarrow 0} \left[r^{6+\sigma v^{-2p} }R^{\a\b\mu\nu}R_{\a\b\mu\nu}\right](v,r)\lesssim &\: 1
\end{align}
{\color{black}for suitably large $v$.}
\end{theorem}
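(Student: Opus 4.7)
My plan is to track the precise asymptotic behaviour of the scalar field from $\mathcal{H}$ into the interior, use this to compute a sharp growth rate for the Hawking mass along ingoing null hypersurfaces, and then convert mass bounds into Kretschmann-scalar bounds via the pointwise identity
\be
R_{\alpha\beta\gamma\delta}R^{\alpha\beta\gamma\delta} = \frac{48\, m^2}{r^6} + \text{(scalar-field corrections)}.
\ee
The correction terms involve $(\partial_u\phi)^2$ and $(\partial_v\phi)^2$ and are already controlled at rate $r^{-4}$ by \eqref{eq:dphinearr0}, hence are of lower order relative to $m^2 r^{-6}$ provided $m$ grows at any positive rate as $r\downarrow 0$.

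The first step is to establish that, in a neighbourhood of $i^+$, the renormalized geometric quantities $\Omega^2$, $\partial_u r$, $\partial_v r$ and $m$ converge to their Schwarzschild values with mass $M=1$. This would be achieved by combining the monotonicity of $m$ along $\mathcal{H}$, the decay assumption \eqref{eq:priceassm}, and a bootstrap argument along ingoing $\{v=\text{const}\}$ hypersurfaces, giving polynomial-in-$v^{-1}$ convergence rates. This provides the Schwarzschild background needed in subsequent steps and realises the renormalised convergence statement announced in the introduction. The second step is to propagate the horizon tails \eqref{eq:priceassm} into the trapped region by integrating the spherically symmetric wave equation along $\{v=\text{const}\}$. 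Because $v$ is essentially constant along ingoing null rays near $i^+$, the weights $v^{-q}$ and $v^{-p}$ transmit into the interior without degradation, yielding sharp estimates $D_1 v^{-q}\lesssim|\partial_v\phi|(u,v)\lesssim D_2 v^{-p}$ together with matching control on $|\partial_u\phi|$ via the transport equation $\partial_u(r\partial_v\phi)=-\partial_v r\cdot \partial_u\phi$.

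With these two ingredients in hand I would carry out the mass-inflation step: inserting the scalar field bounds into the Raychaudhuri equation $\partial_u(\Omega^{-2}\partial_v r)=-r\Omega^{-2}(\partial_v\phi)^2$ and integrating from a reference hypersurface near $\mathcal{H}$ down to $\{r=r_1\}$ yields, after exponentiation and use of the Hawking-mass identity,
\be
r^{-\tfrac12\rho D_1^2 v^{-2q}}\lesssim m(u,v)\lesssim r^{-\tfrac12\sigma v^{-2p}},
\ee
where the numerical constant $\rho$ arises from the Schwarzschild limiting values of $\Omega^{-2}(-\partial_v r)$ on the background (which are explicit), while $\sigma$ additionally depends on $D_2$. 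Substituting these bounds into the Kretschmann identity above, and bounding the scalar-field correction terms by $r^{-4}$ using \eqref{eq:dphinearr0}, then produces the two stated inequalities.

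The principal obstacle, in my view, is extracting the \emph{sharp} exponents in the mass-inflation step. The theorem asks for $6+O(v^{-2p})$ rather than some polynomial rate, so every error in the perturbative Schwarzschild expansion of Step 1, every deviation from the leading tail in Step 2, and every cross-term in the Raychaudhuri integration of Step 3 must be absorbed into implicit constants without contaminating the sharp $v$-weight in the exponent. In particular, the upper constraint $q<3p-1$ in the theorem's hypothesis is presumably used here to ensure that lower-order nonlinear corrections to $\partial_v\phi$ remain genuinely subdominant to the leading $v^{-p}$ tail. The coupling between Steps 1--3, rather than any single estimate in isolation, is the technical heart of the argument.
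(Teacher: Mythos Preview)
Your overall architecture (propagate horizon tails inward, deduce mass inflation, convert to Kretschmann bounds) matches the paper, but the central step --- the mass-inflation mechanism --- is not correctly identified. The equation you call Raychaudhuri, $\partial_u(\Omega^{-2}\partial_v r)=-r\Omega^{-2}(\partial_v\phi)^2$, does not exist; the actual Raychaudhuri constraints are $\partial_u(\Omega^{-2}\partial_u r)=-r\Omega^{-2}(\partial_u\phi)^2$ and $\partial_v(\Omega^{-2}\partial_v r)=-r\Omega^{-2}(\partial_v\phi)^2$. More seriously, even integrating the correct mass equation $\partial_v m = 2r^2\Omega^{-2}(-\partial_u r)(\partial_v\phi)^2$ with the \emph{Schwarzschild} background values $\Omega^2\sim M/r$, $\partial_u r\sim -M/r$, $r^2\partial_v\phi\sim v^{-p}$ yields only $\partial_v m\sim v^{-2p}r^{-2}$ and hence $m\sim v^{-2p}\log(r_0/r)$ --- logarithmic, not power-law, growth. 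The power $r^{-\rho v^{-2q}}$ only emerges after one knows that $\Omega^2$ itself has been distorted, and this is exactly what you are trying to prove. The paper breaks this circularity by working instead with the renormalized quantity $\log(r\Omega^2)$, which satisfies
\[
\partial_u\partial_v\log(r\Omega^2)=\tfrac{1}{4}r^{-2}\Omega^2-2\partial_u\phi\partial_v\phi,
\]
where the dangerous term $\tfrac{1}{4}r^{-2}\Omega^2$ is controlled by the monotonicity $\partial_u(\Omega^2/(-\partial_u r))\le 0$ and stays bounded, while the $\partial_u\phi\partial_v\phi$ term (with the sharp $r^{-4}$ weights from your Step~2, properly stated as $r^2|\partial_\bullet\phi|\lesssim v^{-p}$ rather than $|\partial_v\phi|\lesssim v^{-p}$) integrates to give $\log(r\Omega^2)\sim v^{-2p}\log r$. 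This directly yields $r\Omega^2\sim r^{c\,v^{-2p}}$, from which both the mass bounds and the Kretschmann lower bound follow via $m\sim r\Omega^{-2}\partial_u r\partial_v r$.

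Your treatment of the Kretschmann upper bound is also too optimistic. The ``scalar-field corrections'' to $48m^2/r^6$ include terms like $\Omega^{-4}(\partial_u\phi\partial_v\phi)^2$, which with $\Omega^{-2}\sim r$ and $\partial_\bullet\phi\sim r^{-2}$ are of order $r^{-6}$, not $r^{-4}$; they are subleading only because of the extra $v^{-4p}$ decay, and one must check that no term acquires more than the $r^{-\sigma v^{-2p}}$ enhancement carried by $\Omega^{-4}$. The paper does not use the $48m^2/r^6$ decomposition for the upper bound at all: it writes out the full Kretschmann scalar in double-null coordinates and bounds each term using the $r\Omega^2$ estimate together with pointwise bounds on $\partial_u r$, $\partial_v r$, $\partial_u\log\Omega$, $\partial_v\log\Omega$ and their second derivatives, all of which are controlled at rates $r^{-k}$ with no additional enhancement, so that every occurrence of $\Omega^{-2}$ contributes exactly one factor of $r^{1-O(v^{-2p})}$.
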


We refer to Theorem \ref{thm:precisemain} for a more precise version of Theorem \ref{main thm} and Theorem \ref{thm:approachschw} for additional bounds on {\color{black} the Hawking mass}, the scalar field $\phi$ and the components of the metric $g$, which are crucial for obtaining Theorem \ref{thm:precisemain}.

\begin{remark}
Theorem \ref{main thm} {\color{black} establishes the existence} of a class of {\color{black} spherically symmetric} black hole {\color{black} spacetimes} with a spacelike singularity where the Kretschmann scalar blow-up is \emph{stronger} than in Schwarzschild. This may be contrasted with spacetime regions bounded by spacelike singularities at which the Kretschmann scalar blows up according to the Schwarzschild rate, with respect to a natural choice of area radius; see for example \cite{fourno16} and references therein.
\end{remark}

\begin{remark}
The upper bound of the estimate \eqref{eq:priceassm} was proved in \cite{MDIR05} in the setting of the Einstein--Maxwell--scalar field system and holds in particular for the black hole spacetimes of Christodoulou \cite{DC91}. While an $L^2$-version of the lower bound in \eqref{eq:priceassm} has been obtained for generic initial data in \cite{Luk2016b} in the same setting, it remains open (in $L^{\infty}$) in the setting of \eqref{ES}.

In \cite{aagprice}, it was shown that in the \emph{linearized setting} the scalar field has ``polynomial tails'' along the event horizon, i.e.\ the leading-order asymptotic behaviour of the scalar field along the event horizon is inverse polynomial. This is sometimes called ``Price's law'' in the literature as it originates from earlier heuristic work by Price \cite{Price1972}. In light of the linearized results in \cite{aagprice} and the $L^2$-lower bounds in \cite{Luk2016b}, we expect that for suitably regular and localized spherically symmetric initial data for \eqref{ES}, \eqref{eq:priceassm} should hold with $p=q=4$.
\end{remark}

\begin{remark}
Note that the constant $D_1$ in the lower bound for the Kretschmann scalar in Theorem \ref{main thm} arises directly from the lower bound {\color{black}assumed on} the scalar field along the event horizon. In the linearized setting of \cite{aagprice}, $D_1$ is proportional to the \emph{time-inverted Newman--Penrose constant} (introduced in \cite{aagprice}), which can either be expressed as an integral over the initial data in the black hole exterior (that is generically non-vanishing) or as a quantity defined at future null infinity \cite{aagprice, lukoh17}. The time-inverted Newman--Penrose constant can moreover be interpreted as a global conserved quantity for the wave equation \cite{paper-bifurcate}.
\end{remark}

\begin{remark}
The bounds in Theorem \ref{main thm} imply that the blow-up rate of the Kretschmann scalar is not constant along $\mathcal{S}$, and moreover, it converges to the Schwarzschild rate as we approach future timelike infinity along $\mathcal{S}$. The variation of the blow-up rate at different points of $\mathcal{S}$ illustrate how the blow-up mechanism is not of an ``ODE type'', i.e. it can \emph{not} be understood at the level of an ODE model equation, but instead can be thought of as being of ``PDE type''. This is in contrast with many of the blow-up mechanisms encountered in the setting of nonlinear dispersive equations, geometric evolutionary equations and fluids.
\end{remark}

\subsection{Previous work}
In this section, we give an overview of some related previous work on singularities and curvature blow-up in black hole interiors.

\subsubsection{Spherically symmetric models}
In \cite{DC91}, Christodoulou showed that for generic large data, spherically symmetric solutions to (\ref{ES}) have black hole regions bounded to the future by a spacelike singularity where the Kretschmann scalar blows up and the metric is $C^{0}$-inextendible under spherical symmetry; thus validating the $C^{ 0}$-formulation of the Strong Cosmic Censorship conjecture (SCC) for the spherically symmetric Einstein--scalar field system (see for example \cite{dl-scc} for a precise formulations of this conjecture). {\color{black}In \cite{DC99} Christodoulou showed moreover that generic initial data ({\color{black}in the bounded variation class}) lead to solutions that are either future geodesically complete or contain a black hole region, establishing therefore also the Weak Cosmic Censorship conjecture in this spherically symmetric setting (see \cite{schris} for a precise formulation of this conjecture).}

In a series of papers \cite{MD03, MD05c, MD12} Dafermos considered spherically symmetric dynamical black hole solutions to the Einstein--Maxwell--scalar field system with constant electromagnetic charge $0<e<M$, which may be viewed as toy models for dynamical non-spherically symmetric black holes with an angular momentum. He showed that dynamical black hole solutions approaching sub-extremal Reissner--Nordstr\"om (with non-zero charge) along the event horizon, with rates that agree with the upper bounds for the scalar field established in \cite{MDIR05}, have a non-empty Cauchy horizon across which the metric is $C^0$-\emph{extendible}, thus violating the $C^0$-formulation of the SCC. 

Under the additional assumptions of pointwise \emph{lower bounds} for the scalar field, however, which are consistent with the rates predicted in \cite{Price1972} and proved in the linearized setting in \cite{aagprice}, it was moreover shown that the Hawking mass blows up at the Cauchy horizon (this is known as ``mass inflation'') and hence, the Kretschmann scalar blows up, resulting in $C^1$-inextendibility of the metric, or inextendibility with $L^2_{loc}$ Christoffel symbols, which is in agreement with the Christodoulou reformulation of SCC in \cite{DC09}. The Einstein--Maxwell--scalar field setting of Dafermos was revisited by Luk--Oh \cite{Luk2016a,Luk2016b}, who obtained $C^2$-inextendibility for generic initial data by removing the additional lower bound \emph{assumptions} on the event horizon and \emph{proving} instead an $L^2$-lower bound. 

Black holes arising from asymptotically flat two-ended initial data for the Einstein--Maxwell--scalar field system must have charge $0\leq e<M$, see  \cite{kommemiphd} and Appendix A of \cite{Luk2016a} for a proof, i.e. they are sub-extremal in the limit. Nevertheless, if one considers incomplete data hypersurfaces or adds charge to the scalar field, the corresponding black hole may be \emph{extremal in the limit}. Such solutions were considered by the second author and Luk \cite{gajluk} and it was shown that under expected upper bound assumptions for the scalar field along the event horizon, the metric is $H^1_{loc}$ at the Cauchy horizon and it is moreover possible to extend the metric as a solution to the Einstein equations across the horizon. A general characterization of solutions to the Einstein--Maxwell--charged scalar field system was initiated in \cite{kommemiphd} and it provides a setting for studying more generally black hole solutions forming from one-ended asymptotically flat initial data. See \cite{moortel18, moortel19, moortel20} for recent results on the nature of black hole interior singularities in this setting. Note also the numerics in \cite{ches20}.

Let us moreover note that adding a cosmological constant term to the Einstein equations results in exponential decay of dynamical black hole solutions towards a stationary state \cite{HV2016}. Consequently, there can be a range of stability and instability phenomena at the Cauchy horizon \cite{Costa2015, Costa2014, Costa2014a}, depending on the precise {\color{black}value of the} exponential decay rate. 

\subsubsection{Linearized setting}
A first step towards understanding the interiors of black hole solutions outside of spherical symmetry is to develop a theory of stability and instability for solutions to the linear wave equation on the interior of a fixed black hole background. Fournodavlos--Sbierski considered the behaviour of linear waves $\phi$ on Schwarzschild interiors \cite{fournosbier19} and obtained the general blow-up profile $\phi$ at $r=0$, with logarithmic blow-up to leading-order.\footnote{We note that this blow-up profile is consistent with the behaviour in the spherically symmetric non-linear setting found in \cite{AZ}.}

Franzen established in \cite{franzen14} boundedness and $C^0$-extendibility of $\phi$ at the Cauchy horizon of sub-extremal Reissner--Nordstr\"om, whereas $H^1_{loc}$ blow-up was established by Luk--Oh \cite{lukoh17}. In contrast, it was shown by the second author in \cite{gaj17a} that $\phi$ is bounded in $H^1_{loc}$ at the inner horizon of extremal Reissner--Nordstr\"om, assuming the decay properties of \cite{aagprice}, with higher regularity depending delicately on the precise asymptotic behaviour of $\phi$ along the event horizon. The situation in the cosmological setting of near-extremal Reissner--Nordstr\"om--de Sitter spacetimes is similarly more delicately connected to precise asymptotics along the event horizon; see \cite{hprl18, costfranz16}.

 Stability and instability results for $\phi$ at the Cauchy horizon have been extended to non-spherically symmetric backgrounds: see for example \cite{luksbier16, dafshl16, hintz17, franzen19} for results in sub-extremal Kerr, \cite{gaj17b} for results in extremal Kerr and \cite{diasetal18} in sub-extremal Kerr--de Sitter.
 
 \subsubsection{Einstein equations in polarized axisymmetry}
 
{\color{black} Recently, \cite{fournoalex} Alexakis and Fournodavlos proved a stability result for the Schwarzschild singularity {\color{black}in the setting of the} Einstein vacuum equations in polarized axisymmetry. In contrast with the spherically symmetric setting, there is no longer a single natural parameter that measures the strength of the singularity (i.e. the area radius of the 2-spheres foliating the spacetime). 

The authors evolved small perturbations of Schwarzschild initial data on constant (small) $r$ level sets in the black hole interior via an ``asymptotically constant-mean-curvature spacetime foliation'', i.e.\ they considered a congruence of timelike geodesics emanating from the initial data hypersurface with a parameter $s$ and showed that the mean curvature $\mbox{tr}_g K$ of level sets of $s$ satisfies:
\[
\mbox{tr}_g K=\f32 s^{-\f32}+O(s^{-\f32+\f14}), 
\]
and the Kretschmann scalar blows up with the rate $\frac{1}{s^6}$ as $s\downarrow 0$. 

While in Schwarzschild spacetimes, the above coordinate $s$ coincides with the area radius $r$ of 2-spheres, this is no longer true in the dynamical setting. We refer to Section 1.2.3 of \cite{fournoalex} for additional comments on the blow-up rates of the Kretschmann scalar with respect to the area elements of the spheres in the asymptotically constant-mean-curvature foliation {\color{black}(an analogue of the area radius function in spherical symmetry)}.}

\subsubsection{Einstein equations without symmetry}
In \cite{fourno16}, Fournodavlos proved the backward stability of the Schwarzschild singularity for Einstein vacuum equations. In \cite{dl-scc}, Dafermos--Luk established $C^0$-extendibility across the Cauchy horizon for black hole spacetimes approaching sub-extremal Kerr solutions suitably fast along the event horizon.

\subsubsection{Cosmological singularities}
A related problem to understanding curvature blow-up near spacelike singularities inside black holes is the behaviour of past singularities in the setting of perturbations of cosmological spacetimes like the FLRW spacetimes. See for example \cite{ring17, rodspeck18, alho19} and references therein.

\subsection{\emph{Open problem}: interior of dynamical black holes approaching Schwarzschild outside of symmetry}
In light of the construction of dynamical black hole spacetimes approaching Schwarzschild in the exterior without any symmetries, following from a stability analysis the Schwarzschild exterior announced in \cite{dhrt} and similar results obtained recently in polarized axisymmetry \cite{klainszef17}, it is natural to investigate quantitatively the nature of the singularities in the corresponding black hole interiors near timelike infinity, {\color{black} and in particular, the effect of expected inverse polynomial behaviour of dynamical quantities along the event horizon on the blow-up rate of the Kretschmann scalar}. We believe that the quantitative upper and lower bound estimates developed in the present paper, together with the \emph{global} characterization of blow-up rates in $r^{-1}$ near $r=0$ may be useful for such an analysis outside of spherical symmetry. 

\section{Geometric preliminaries}

\subsection{Schwarzschild black holes}
The Schwarzschild black hole interior with mass $M>0$ is defined as the Lorentzian manifold $(\mathcal{M}_{\rm int},g_S)$, with $\mathcal{M}_{\rm int}=(\R_u\times \R_v\cap\{v+u<0\})\times \s^2$, with
\begin{equation*}
g_S=-\Omega^2_S(r_S(u,v))dudv+r_S^2(u,v) (d\theta^2+\sin^2\theta d\varphi^2),
\end{equation*}
where
\begin{equation*}
\Omega^2_S=\frac{2M}{r_S}-1
\end{equation*}
and $r_S$ is the unique solution to
\begin{equation*}
\frac{1}{2}(v+u)=r_*(r_S)=r_S+2M\log\left(\frac{2M-r_S}{2M}\right).
\end{equation*}
with $0<r_s<2M$.

 \begin{figure}[H]
	\begin{center}
\includegraphics[scale=0.7]{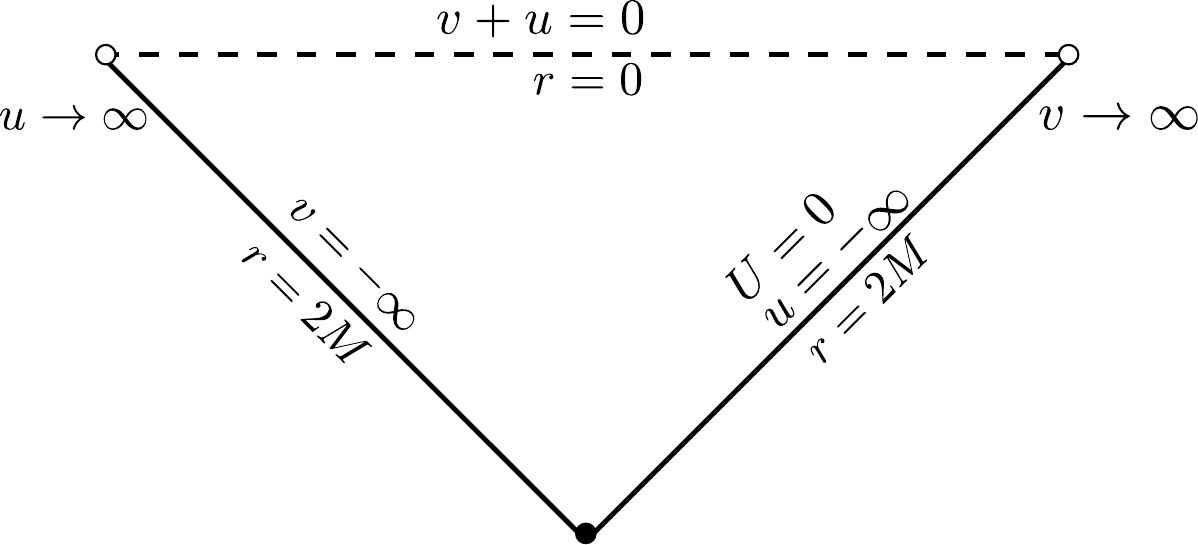} 
\end{center}
\vspace{-0.2cm}
\caption{A Penrose diagrammatic representation of the Schwarzschild black hole interior.}
	\label{fig:schw}
\end{figure}

Note that we can express
\begin{equation*}
\frac{r_S}{2M}\Omega^2_S(u,v)=e^{-\frac{r_S}{2M}}\cdot e^{\frac{1}{4M}(v-|u|)}\leq e^{\frac{1}{4M}(v-|u|)}.
\end{equation*}
Furthermore,
\begin{equation*}
\partial_ur_s=\partial_vr_s=-\frac{1}{2}\Omega^2_S,
\end{equation*}
so we can also estimate
\begin{equation}
\label{eq:durschwest}
|\partial_ur_s|=|\partial_vr_s|\leq \frac{M}{r_s} e^{\frac{1}{4M}(v-|u|)}.
\end{equation}

Let $U=4Me^{\frac{u}{4M}}$, then we can express 
\begin{equation*}
g_S=-\Omega^2_S(r_S(u,v))e^{\frac{|u|}{4M}}dUdv+r_S^2 (d\theta^2+\sin^2\theta d\varphi^2),
\end{equation*}
and we can extend $(\mathcal{M}_{\rm int},g_S)$ smoothly to the manifold-with-boundary 
\begin{equation*}
\overline{\mathcal{M}_{\rm int}}=\left([0,\infty)_U \times  \R_v\cap\left\{v+4M\log\left(\frac{U}{4M}\right)<0\right\} \right)\times \s^2.
\end{equation*}

\section{System of equations}
The Einstein--scalar system of equations \eqref{ES} for Lorentzian metrics of the form
\begin{equation*}
g=-\Omega^2(u,v)dudv+r^2(u,v)\big(d\theta^2+\sin^2\theta d\phi^2\big),
\end{equation*}
with $r$ and $\Omega^2$ strictly positive functions, reduces to a 1+1-dimensional system of equations for $(r,\Omega^2,\phi)$ on subsets of $\R^2$. We can split the system into:
\begin{enumerate}[1)]
\item Propagation equations for the metric components $r$ and $\Omega^2$,
\item Constraint equations for the metric components $r$ and $\Omega^2$,
\item Propagation equations for the scalar field $\phi$.
\end{enumerate}
\subsection{Propagation equations for the metric components} 
\begin{align}
\label{propeq:r1}
\partial_u(r\partial_v r)=&-\frac{1}{4}\Omega^2,\\
\label{propeq:r2}
\partial_v(r\partial_u r)=&-\frac{1}{4}\Omega^2,\\
\label{propeq:Omega}
\partial_u\partial_v\log \Omega^2=&-2\partial_u\phi\partial_v\phi+\frac{1}{2}r^{-2}\Omega^2+2r^{-2}\partial_ur \partial_v r.
\end{align}
It will be convenient to consider also the following equations for the rescaled quantities $r\Omega^2$ and $r^2\Omega^2$: 
\begin{align}
\label{propeq:Omegarescale1}
\partial_u\partial_v\log (r\Omega^2)=&\:\frac{1}{4}r^{-2}\Omega^2-2\partial_u\phi\partial_v\phi,\\
\label{propeq:Omegarescale2}
\partial_u\partial_v\log (r^2\Omega^2)=&-2r^{-2}\partial_ur \partial_v r-2\partial_u\phi\partial_v\phi.
\end{align}
The equations \eqref{propeq:Omegarescale1} and \eqref{propeq:Omegarescale2} follow from a combination of \eqref{propeq:Omega} with \eqref{propeq:r1}.

\subsection{Constraint equations for the metric components} 
\begin{align}
\label{conseq:r1}
\partial_u(\Omega^{-2}\partial_u r)=&-r \Omega^{-2}(\partial_u \phi)^2,\\
\label{conseq:r2}
\partial_v(\Omega^{-2}\partial_v r)=&-r \Omega^{-2}(\partial_v \phi)^2.
\end{align}
{\color{black} Equations \eqref{conseq:r1} and \eqref{conseq:r2} are known as the ``Raychaudhuri equations''.} 
\subsection{Propagation equations for the scalar field}
\begin{align}
\label{propeq:phi1}
\partial_u(r\partial_v \phi)=&-\partial_vr \partial_u \phi,\\
\label{propeq:phi2}
\partial_v(r\partial_u \phi)=&-\partial_ur \partial_v \phi.
\end{align}

Denote
\begin{equation}
\label{eq:defY}
Y=\frac{1}{-\partial_u r}\partial_u.
\end{equation}

It will be convenient to consider also the following propagation equation for the rescaled quantity $Y\phi$:
\begin{equation}
\label{propeq:Yphi}
\partial_v(r^2 Y\phi)=\left[-\frac{1}{4}r^{-1}\left(\frac{\Omega^2}{-\partial_u r}\right)+2r^{-1}\partial_vr\right] r^2 Y\phi+r\partial_v\phi.
\end{equation}
Equation \eqref{propeq:Yphi} follows from a combination of \eqref{propeq:phi2} with \eqref{propeq:r2}.

We also consider the Hawking mass, which is defined as follows:
\begin{equation}\label{Hawking mass}
m(u,v)=\f{r}{2}(1+4\Omega^{-2}\partial_u r \partial_v r).
\end{equation} 

\section{Initial data}
\label{sec:initialdata}
We consider a characteristic initial value problem for $(r,\widehat{\Omega}^2, \phi)$ satisfying the system of equations \eqref{propeq:r1}--\eqref{propeq:Omega},  \eqref{conseq:r1}--\eqref{propeq:phi2} with $\widehat{\Omega}^2$ replacing  ${\Omega}^2$ and $U$ replacing $u$, and initial data imposed on the hypersurfaces-with-boundary
\begin{align*}
H_0:=&\:\{(U,v)\in \R^2\,|\, U=0, v_0\leq v<\infty\},\\
\underline{H}_0:=&\:\{(U,v)\in \R^2\,|\, 0\leq U\leq U_0, v=v_0\}.
\end{align*}
We denote also $\mathcal{H}:=H_0$.

Let $M>0$ be a Schwarzschild mass parameter. We normalize the coordinates $(U,v)$ via the following gauge choices:
\begin{align}
\label{gaugeeq:Omega1}
\widehat{\Omega}^2(0,v)=&\widehat{\Omega}_S^2(0,v)=\:e^{-1+\frac{1}{4M} v}\quad \textnormal{for all $v\geq v_0$},\\
\label{gaugeeq:Omega2}
\widehat{\Omega}^2(U,v_0)=&\widehat{\Omega}_S^2(U,v_0)=\frac{2M}{r_s(U,v_0)} e^{\frac{-r_s(U,v_0)}{2M}}e^{\frac{1}{4M}v_0}\quad \textnormal{for all $0\leq U\leq U_0$,}
\end{align}
and we assume that $r$ approaches the Schwarzschild horizon radius along $H_0$, i.e.\
\begin{equation*}
\lim_{v \to \infty} r(0,v)=2M,
\end{equation*}
and moreover,
\begin{equation*}
\lim_{v \to \infty} \phi(0,v)=0.
\end{equation*}

We then take our freely prescribable data to be $rY\phi$ on $\underline{H}_0$, $\partial_v\phi$ on $H_0$ and $\partial_Ur(0,v_0)$. We making the following quantitative assumption on $\partial_v\phi|_{H_0}$: let $1<p\leq q<{\color{black}3}p-1$, then there exist positive dimensionless constants $D_{1}$ and $D_2$, such that
\begin{equation}
\label{eq:estdataphi}
D_1  (M^{-1}v)^{-q}\leq r\partial_v\phi(0,v)\leq D_2 (M^{-1}v)^{-p}.
\end{equation}

We moreover assume
\begin{equation}
\label{eq:estdataphi2}
\sup_{0\leq U\leq U_0}  |rY\phi|(U,v_0)+|\partial_Ur|(0,v_0)\leq D_3.
\end{equation}
for some dimensionless constant $D_3$ and, {\color{black}without loss of generality}, we take $rY\phi(U,v_0)>0$.

 \begin{figure}[H]
	\begin{center}
\includegraphics[scale=0.5]{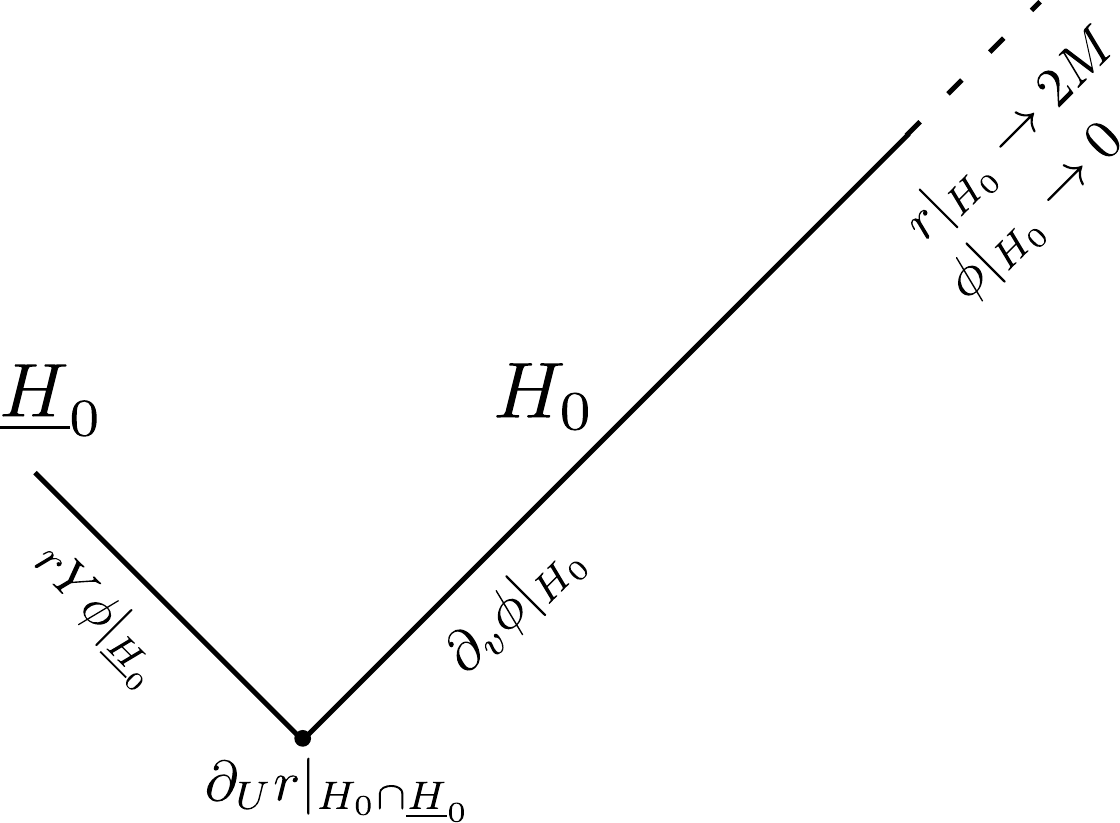} 
\end{center}
\vspace{-0.2cm}
\caption{The initial data.}
	\label{fig:contour}
\end{figure}

Using the constraint equations \eqref{conseq:r1} and \eqref{conseq:r2}, we obtain the following estimates for $\partial_v\phi(0,v)$ and $\partial_Ur(0,v)$:
\begin{lemma}
\label{lm:rdata}
There exists constants $c,C>0$ such that along ${H}_0$:
\begin{align}
\label{eq:estdatadvr}
 c D_1^2 (M^{-1}{v})^{-2q}\leq \partial_vr(0,v)\leq&\: C D_2^2(M^{-1}{v})^{-2p},\\
 \label{eq:estdatar}
 c MD_1^2(M^{-1}v)^{-2q+1}\leq 2M-r(0,v)\leq &\:C M D_2^2(M^{-1}{v})^{-2p+1},\\
 \label{eq:estdatadUr}
\left|\frac{\partial_Ur}{\widehat{\Omega}^2}(0,v)+\frac{1}{2}\right|\leq&\: CD_2^2 (M^{-1}v)^{-2p+1}+C D_3e^{-\frac{1}{4M}(v-v_0)}.
\end{align}
Furthermore, for all $\epsilon>0$, there exists a $v_1\geq v_0$ that depends moreover on $D_i$, $i=1,2,3$, such that for all $v\geq v_1$:
\begin{equation}
\label{eq:idlowboundYphi}
r^2Y\phi(0,v)\geq 4M(1-\epsilon)D_1(M^{-1}v)^{-q}.
\end{equation}
\end{lemma}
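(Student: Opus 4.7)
The plan is to treat the restrictions of the constraint and propagation equations to $H_0=\{U=0\}$ as ODEs in $v$, and to exploit the explicit gauge $\widehat{\Omega}^2(0,v)=e^{-1+v/(4M)}$ together with the assumed bounds \eqref{eq:estdataphi}--\eqref{eq:estdataphi2}. I would prove the four estimates in the order in which they are listed, since each one feeds into the next.

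For \eqref{eq:estdatadvr}, I start from the Raychaudhuri equation \eqref{conseq:r2}, which along $H_0$ reads
\[
\partial_v(\widehat{\Omega}^{-2}\partial_v r)(0,v)=-r\,\widehat{\Omega}^{-2}(\partial_v\phi)^2(0,v)\leq 0,
\]
so $\widehat{\Omega}^{-2}\partial_v r(0,\cdot)$ is nonincreasing. A nonzero limit $L$ as $v\to\infty$ is incompatible with $r(0,v)$ being bounded: since $\widehat{\Omega}^2$ grows exponentially, $L>0$ would force $\partial_v r\gtrsim L\widehat{\Omega}^2$, driving $r$ to $\infty$, while $L<0$ would drive $r$ through zero. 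Hence $L=0$ and
\[
\widehat{\Omega}^{-2}\partial_v r(0,v)=\int_v^\infty r\,\widehat{\Omega}^{-2}(\partial_v\phi)^2(0,v')\,dv'.
\]
Plugging in the assumed upper and lower bounds on $r\partial_v\phi$, together with $\widehat{\Omega}^{-2}(0,v')=e^{1-v'/(4M)}$, one integration by parts on the dominant exponential factor yields $\int_v^\infty e^{1-v'/(4M)}(M^{-1}v')^{-2\ell}\,dv'\sim 4Me^{1-v/(4M)}(M^{-1}v)^{-2\ell}$ for $\ell\in\{p,q\}$, and multiplying by $\widehat{\Omega}^2(0,v)$ produces \eqref{eq:estdatadvr}. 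The bound \eqref{eq:estdatar} then follows at once from $2M-r(0,v)=\int_v^\infty\partial_v r(0,v')\,dv'$, using $r(0,v)\to 2M$.

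For \eqref{eq:estdatadUr}, the propagation equation \eqref{propeq:r2} along $H_0$ is a first-order ODE for $r\partial_U r$ in $v$,
\[
\partial_v(r\partial_U r)(0,v)=-\tfrac14\widehat{\Omega}^2(0,v),
\]
integrating explicitly to $r\partial_U r(0,v)=r(0,v_0)\partial_U r(0,v_0)+M\widehat{\Omega}^2(0,v_0)-M\widehat{\Omega}^2(0,v)$. Dividing by $r(0,v)\widehat{\Omega}^2(0,v)$ and rearranging,
\[
\frac{\partial_U r}{\widehat{\Omega}^2}(0,v)+\frac12=\frac{r(0,v)-2M}{2r(0,v)}+\frac{r(0,v_0)\partial_U r(0,v_0)+M\widehat{\Omega}^2(0,v_0)}{r(0,v)\widehat{\Omega}^2(0,v)}.
\]
The first term is $O(D_2^2(M^{-1}v)^{-2p+1})$ via \eqref{eq:estdatar} and $r(0,v)\gtrsim M$, while the second is $O(D_3 e^{-(v-v_0)/(4M)})$ after invoking $|\partial_U r(0,v_0)|\leq D_3$ and $\widehat{\Omega}^2(0,v_0)/\widehat{\Omega}^2(0,v)=e^{-(v-v_0)/(4M)}$, giving \eqref{eq:estdatadUr}.

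For \eqref{eq:idlowboundYphi}, I view \eqref{propeq:Yphi} along $H_0$ as a linear ODE for $X(v):=r^2 Y\phi(0,v)$, of the form $\partial_v X=A(v)X+r\partial_v\phi$ with $A(v)=-\tfrac{1}{4r}\cdot\widehat{\Omega}^2/(-\partial_U r)+2r^{-1}\partial_v r$. By the estimates just obtained, $r(0,v)\to 2M$, $\widehat{\Omega}^2/(-\partial_U r)\to 2$, and $\partial_v r\to 0$, so $A(v)\to -1/(4M)$ with polynomially small errors in $v$. Solving via the integrating factor gives
\[
X(v)=X(v_0)\exp\!\left(\int_{v_0}^v\! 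A\right)+\int_{v_0}^v\exp\!\left(\int_{v'}^v\! A\right)r\partial_v\phi(0,v')\,dv';
\]
the first contribution is exponentially small, and a standard Laplace-type analysis of the second term — using $r\partial_v\phi(0,v')\geq D_1(M^{-1}v')^{-q}$ together with the kernel $e^{-(v-v')/(4M)}$ peaked at $v'=v$ — shows $\int_{v_0}^v e^{-(v-v')/(4M)}D_1(M^{-1}v')^{-q}\,dv'\to 4MD_1(M^{-1}v)^{-q}$ at leading order, producing \eqref{eq:idlowboundYphi} for all $v\geq v_1$ with $v_1$ large enough. The main technical obstacle is twofold: first, ruling out a nontrivial limit of $\widehat{\Omega}^{-2}\partial_v r$ at $i^+$ in a genuinely dynamical setting; and second, controlling the error $A(v)+1/(4M)$ uniformly on $[v_0,v]$ (not just near $v'=v$) so that replacing $\int_{v'}^v A$ by $-(v-v')/(4M)$ in the Laplace reduction preserves the leading constant $4M$.
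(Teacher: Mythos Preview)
Your proposal is correct and follows the same skeleton as the paper: Raychaudhuri in $v$ for \eqref{eq:estdatadvr}, integration in $v$ for \eqref{eq:estdatar}, the propagation equation \eqref{propeq:r2} for \eqref{eq:estdatadUr}, and the linear ODE \eqref{propeq:Yphi} for \eqref{eq:idlowboundYphi}. The argument that the limit of $\widehat{\Omega}^{-2}\partial_v r$ vanishes is also the paper's: it is forced by $r(0,v)\to 2M$.

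The one genuine difference is in how you extract \eqref{eq:idlowboundYphi}. You solve $X'=A(v)X+r\partial_v\phi$ with the \emph{exact} integrating factor $\exp(\int A)$ and then do a Laplace-type reduction, using that $\int_{v'}^{v}|A+\tfrac{1}{4M}|\to 0$ as $v'\to\infty$ so the kernel is $(1+o(1))e^{-(v-v')/(4M)}$ on the region where it concentrates, while the tail $v'\in[v_0,v_*]$ is exponentially suppressed. The paper instead multiplies by the \emph{fixed} weight $e^{v/(4M)}$, which leaves an error proportional to $(A+\tfrac{1}{4M})\,|e^{v/(4M)}X|$; to control it the paper first proves an upper bound $|r^2Y\phi|\lesssim M(M^{-1}v)^{-p}$ via Gr\"onwall, and then shows the resulting error in the lower bound is $O((M^{-1}v)^{-3p+1})$, which is dominated by $D_1(M^{-1}v)^{-q}$ precisely because of the standing hypothesis $q<3p-1$. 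Your route avoids both the auxiliary upper bound and the use of $q<3p-1$ at this step, at the cost of the splitting argument you flag as the second ``obstacle'' (which is routine once stated). The paper's route is more explicit and yields the upper bound on $r^2Y\phi|_{H_0}$ as a byproduct.
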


We will need to appeal to the following lemma:

\begin{lemma}
\label{lm:expint}
Let $f: [a,\infty) \to \R$ be a continuous function satisfying:
\begin{equation*}
f(x)\leq B x^{-p}
\end{equation*}
for all $x\in [a,\infty)$, with $p,B>0$.

Then for all $\alpha>0$, there exist constant $C>0$ such that
\begin{align}
\label{eq:expint1}
\int_a^b e^{- \alpha x} f(x)\,dx \leq &\:C\alpha^{-1}\cdot Be^{- \alpha a} a^{-p},\\
\label{eq:expint2}
\int_a^b e^{\alpha x} f(x)\,dx \leq &\: C\alpha^{-1}\cdot  Be^{\alpha b} b^{-p}.
\end{align}
\end{lemma}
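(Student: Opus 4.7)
The plan is to prove the two inequalities separately using elementary estimates, exploiting the decreasing nature of $x^{-p}$ for the first and integration by parts for the second.

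For the first inequality, the approach is direct: since $x^{-p}$ is a decreasing function of $x$, we have $x^{-p} \leq a^{-p}$ for all $x \in [a,b]$. Combined with the assumed bound $f(x) \leq Bx^{-p}$, this yields
\begin{equation*}
\int_a^b e^{-\alpha x} f(x)\,dx \leq B a^{-p} \int_a^b e^{-\alpha x}\,dx = \frac{B a^{-p}}{\alpha} \left( e^{-\alpha a} - e^{-\alpha b}\right) \leq \frac{1}{\alpha} B e^{-\alpha a} a^{-p},
\end{equation*}
which gives the desired bound with $C=1$.

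For the second inequality, monotonicity works against us because $e^{\alpha x}$ is growing, so the integral is concentrated near $x=b$ where $x^{-p}$ is smallest. The idea is to exploit this concentration via integration by parts, writing $e^{\alpha x} = \alpha^{-1} \frac{d}{dx} e^{\alpha x}$:
\begin{equation*}
\int_a^b e^{\alpha x} x^{-p}\,dx = \frac{1}{\alpha}\left[ e^{\alpha x} x^{-p}\right]_a^b + \frac{p}{\alpha} \int_a^b e^{\alpha x} x^{-p-1}\,dx.
\end{equation*}
The boundary term contributes $\alpha^{-1} e^{\alpha b} b^{-p}$, which already has the desired form. To control the remaining integral, I use $x^{-p-1} \leq a^{-1} x^{-p}$ for $x \geq a$, which gives
\begin{equation*}
\int_a^b e^{\alpha x} x^{-p}\,dx \leq \frac{1}{\alpha} e^{\alpha b} b^{-p} + \frac{p}{\alpha a} \int_a^b e^{\alpha x} x^{-p}\,dx.
\end{equation*}
Provided that $\alpha a > 2p$ (which will hold in the intended applications of the lemma, where $\alpha \sim M^{-1}$ and $a \sim v \geq v_0$ with $v_0$ chosen sufficiently large), the last term can be absorbed into the left-hand side to yield $\int_a^b e^{\alpha x} x^{-p}\,dx \leq \frac{2}{\alpha} e^{\alpha b} b^{-p}$. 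Multiplying through by $B$ gives the stated bound with $C=2$.

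The only genuine obstacle is the absorption step in the second bound, which requires $a$ to be sufficiently large relative to $\alpha^{-1} p$. If one wanted the lemma to hold for arbitrarily small $a$, one could iterate integration by parts finitely many times (picking up factors $x^{-p-k}$), or simply absorb the resulting constant into $C$ by allowing it to depend on the ratio $p/(\alpha a)$; since later applications in the paper will use this lemma only after restricting to a regime where $v$ is large, the simpler absorption argument is sufficient.
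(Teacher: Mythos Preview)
Your proof is correct and, for the second inequality \eqref{eq:expint2}, follows exactly the paper's approach: integration by parts to peel off the boundary term $\alpha^{-1}e^{\alpha b}b^{-p}$, then control the remainder. The paper writes the remainder bound as $\alpha^{-1}(1+b^{-1}p\alpha^{-1})e^{\alpha b}b^{-p}$, which like your absorption step implicitly requires $\alpha a$ (or $\alpha b$) to be bounded below for the constant to be uniform; you were right to flag this, and right that the applications in the paper only ever use the lemma in that regime.

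For the first inequality \eqref{eq:expint1} your route is slightly different and more elementary: the paper again uses the derivative identity $e^{-\alpha x}x^{-p} = -\alpha^{-1}\frac{d}{dx}(e^{-\alpha x}x^{-p}) - p\alpha^{-1}e^{-\alpha x}x^{-p-1}$ and drops the negative remainder, whereas you simply use the monotonicity $x^{-p}\leq a^{-p}$ and integrate the exponential. Both yield $C=1$; your argument avoids any calculus beyond the exponential integral, while the paper's argument is parallel to its treatment of \eqref{eq:expint2}.
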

\begin{proof}
See Appendix \ref{app:basic}. 
\end{proof}

\begin{proof}[Proof of Lemma \ref{lm:rdata}]
By integrating \eqref{conseq:r2} along $H_0$ (with $\Omega$ replaced by $\widehat{\Omega}$) and using \eqref{eq:estdataphi}, it follows that $e^{-\frac{1}{4M} v}\partial_vr(0,v)$ attains a finite limit as $v\to \infty$, which moreover is equal to 0, because $r(0,v)\to 2M$ as $v\to \infty$. Furthermore, by \eqref{conseq:r2}, we must have that $r(0,v)\geq 2M$.

We can therefore integrate \eqref{conseq:r2} and apply \eqref{eq:estdataphi} and Lemma \ref{lm:expint} to estimate:
\begin{equation*}
\begin{split}
 c D_1^2e^{-\frac{1}{4M} v} (M^{-1}{v})^{-2q}\leq e^{-\frac{1}{4M} v}\partial_vr(0,v)\leq &\: C D_2^2e^{-\frac{1}{4M} v} (M^{-1}{v})^{-2p}
\end{split}
\end{equation*}
for all $v\geq v_0$ and obtain \eqref{eq:estdatadvr}. We integrate \eqref{eq:estdatadvr} in $v$ to obtain \eqref{eq:estdatar}.

In order to estimate $\partial_Ur(0,v)$, we apply \eqref{propeq:r2} along $H_0$, with $u$ replaced by $U$ and $\Omega^2$ replaced by $\widehat{\Omega}^2$:
\begin{equation*}
\begin{split}
r\partial_U r(0,v)=&\:r\partial_U r(0,v_0)-\int_{v_0}^{v}\frac{1}{4} e^{-1+\frac{1}{4M}v'}\,dv'\\
=&\:r\partial_U r(0,v_0)-M(e^{-1+\frac{1}{4M}v}-e^{-1+\frac{1}{4M}v_0})
\end{split}
\end{equation*}
and therefore
\begin{align*}
\left|\frac{r\partial_Ur}{\widehat{\Omega}^2}(0,v)+M\right|\leq&\: CM(1+\partial_U r(0,v_0)) e^{-\frac{1}{4M}v},\\
\left|\frac{\partial_Ur}{\widehat{\Omega}^2}(0,v)+\frac{1}{2}\right|\leq&\: CD_2^2 (M^{-1}v)^{-2p+1}+ CD_3  e^{-\frac{1}{4M}v},
\end{align*}
where we applied \eqref{eq:estdatar} and to arrive at the second inequality.

By \eqref{propeq:Yphi}, together with \eqref{eq:estdatadvr}, \eqref{eq:estdatar} and \eqref{eq:estdatadUr}, we have that along $H_0$:
\begin{equation}
\label{eq:YphialongH0}
|\partial_v(e^{\frac{1}{4M}v}r^2Y\phi)-e^{\frac{1}{4M}v}r\partial_v\phi|\leq CM^{-1}[D_2^2 (M^{-1}v)^{-2p+1}+D_3e^{-\frac{1}{4M}v}]|e^{\frac{1}{4M}v}r^2Y\phi|.
\end{equation}
We integrate along $H_0$ and use Lemma \ref{lm:expint} and \eqref{eq:estdataphi} to obtain:
\begin{equation*}
\begin{split}
|e^{\frac{1}{4M}v}r^2Y\phi|(0,v)\leq &\: |e^{\frac{1}{4M}v_0}r^2Y\phi|(0,v_0)+ {\color{black}\int_{v_0}^v D_2e^{\frac{1}{4M}v'}(M^{-1}v')^{-p}\,dv'}\\
&+CM^{-1}\int_{v_0}^v[D_2^2 (M^{-1}v')^{-2p+1}+D_3e^{-\frac{1}{4M}v}]|e^{\frac{1}{4M}v'}r^2Y\phi|(0,v')\,dv'\\
= &\:  |e^{\frac{1}{4M}v_0}r^2Y\phi|(0,v_0)+CM D_2 {\color{black}e^{\frac{1}{4M}v}}(M^{-1}v)^{-p} \\
&+CM^{-1}\int_{v_0}^v[D_2^2 (M^{-1}v)^{-2p+1}+D_3e^{-\frac{1}{4M}v}]|e^{\frac{1}{4M}v}r^2Y\phi|(0,v')\,dv'
\end{split}
\end{equation*}
We can finally apply a Gr\"onwall inequality to obtain:
\begin{equation}
\label{eq:upboundYphialongH0}
{\color{black}|e^{\frac{1}{4M}v}r^2Y\phi|(0,v) \leq C(D_2,D_3,v_0,p) M(1+(M^{-1}v)^{-p} e^{\frac{1}{4M}v})}.
\end{equation}

{\color{black}We apply \eqref{eq:YphialongH0} again, together with the lower bound in \eqref{eq:estdataphi} and the upper bound in \eqref{eq:upboundYphialongH0}, to obtain:
\begin{equation}
\label{eq:lowboundYphialongH0}
\begin{split}
r^2Y\phi(0,v)\geq&\: e^{-\frac{1}{4M}(v-v_0)}r^2Y\phi(0,v_*)+ e^{-\frac{1}{4M}v}\int_{v_0}^v D_1e^{\frac{1}{4M}v'}(M^{-1}v')^{-q}\,dv'\\
&- CM^{-1}e^{-\frac{1}{4M}v}\int_{v_0}^v[D_2^2 (M^{-1}v)^{-2p+1}+D_3e^{-\frac{1}{4M}v}]|e^{\frac{1}{4M}v}r^2Y\phi|(0,v')\,dv' \\
\geq&\: e^{-\frac{1}{4M}(v-v_0)}rY\phi(0,v_0)+ 4Me^{-\frac{1}{4M}v}\int_{v_0}^v\frac{d}{dv}\left(D_1e^{\frac{1}{4M}v'}(M^{-1}v')^{-q}\right)\,dv'\\
&- C(D_2,D_3,v_0,p)M[e^{-\frac{1}{4M}v}+(M^{-1}v)^{-3p+1}]\\
\geq &\: 4MD_1(M^{-1}v)^{-q}+e^{-\frac{1}{4M}(v-v_0)}rY\phi(0,v_0)-4MD_1e^{-\frac{1}{4M}(v-v_0)}(M^{-1}v_0)^{-q}\\
&- C(D_2,D_3,v_0,p)M(M^{-1}v)^{-3p+1}.
\end{split}
\end{equation}

Using that $q<3p-1$, we conclude that for all $\epsilon>0$, there exists $v_1\geq v_0$ depending on $\epsilon$, $v_0$, $D_1$, $D_2$ and $D_3$ such that for all $v\geq v_1$:}
\begin{equation*}
r^2Y\phi(0,v)\geq 4M(1-\epsilon)D_1(M^{-1}v)^{-q}.
\end{equation*}
\end{proof}

\section{Precise statements of the main theorems}
Consider solutions $(r, \widehat{\Omega}^2, \phi)(U,v)$ to the system of equations \eqref{propeq:r1}--\eqref{propeq:Omega},  \eqref{conseq:r1}--\eqref{propeq:phi2} arising from initial data on $H_0$ and $\underline{H}_0$, as prescribed in Section \ref{sec:initialdata}. We denote with
\begin{equation*}
g=-\widehat{\Omega}^2(U,v) dUdv+r^2(U,v)(d\theta^2+\sin^2\theta d\varphi^2)
\end{equation*}
the Lorentzian metric corresponding to $(r, \widehat{\Omega}^2, \phi)$. Then $(g,\phi)$ form a solution to the Einstein--scalar field system of equations \eqref{ES}.

We denote with $R_{\alpha \beta \mu \nu}[g]$, $\alpha,\beta,\mu,\nu\in \{0,1,2,3\}$, the components of the Riemann curvature tensor corresponding to $g$.
\begin{theorem}
\label{thm:precisemain}
There exists a unique smooth solution $(r, \widehat{\Omega}^2, \phi)(U,v)$ in $[0,U_0]_U\times [v_0,\infty)_v\cap \{r>0\}$ to characteristic initial data prescribed in Section \ref{sec:initialdata} and the following bounds hold for the corresponding $R_{\alpha \beta \mu \nu}[g]$: there exist constants $c,C,\rho>0$ depending only on $M$ and a constant $\sigma>0$ depending on $M$, $D_2$ and $D_3$, and $v_1>v_0$ suitably large, such that for all $v\geq v_1$
\begin{equation*}
 \frac{cM^2}{r^{6+\rho D_1^2(v/M)^{-2p} }}\leq \sum_{\alpha,\beta,\mu,\nu=0}^3R^{\alpha\beta \mu \nu}[g]R_{\alpha \beta \mu \nu}[g]\leq \frac{ CM^2}{r^{6+\sigma (v/M)^{-2p} }}.
\end{equation*}
\end{theorem}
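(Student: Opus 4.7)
The plan is to deduce the two-sided Kretschmann bounds as a corollary of sharp two-sided estimates for the Hawking mass $m(U,v)$ deep in the trapped region, obtained by propagating the horizon data of Lemma \ref{lm:rdata} into the black hole interior via a bootstrap argument. First I would set up a bootstrap region $\mathcal{R}_{v_1}=\{(U,v): 0\leq U\leq U_0,\ v\geq v_1,\ r>0\}$ with bootstrap assumptions saying that $\widehat\Omega^2$ and $r$ stay close to their Schwarzschildean counterparts $\widehat\Omega_S^2$, $r_S$, that $-\partial_Ur/\widehat\Omega^2$ stays close to $\tfrac12$, and that $rY\phi$ is pinched between a constant multiple of $D_1(v/M)^{-q}$ from below and $D_2(v/M)^{-p}$ from above. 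Local existence combined with the quantitative extension principle of \cite{AZ} provides a smooth solution in $\mathcal{R}_{v_1}$; the bootstrap is closed by the improved estimates of the next step.

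Second, I would partition $\mathcal{R}_{v_1}$ along the apparent horizon $\mathcal{A}=\{\partial_vr=0\}$ into a red-shift/no-shift region near $\mathcal{H}$ and a trapped region past $\mathcal{A}$. In the former I integrate \eqref{propeq:r1}--\eqref{propeq:Omega} along outgoing characteristics, exploiting the factor $e^{-v/(4M)}$ coming from $\widehat\Omega_S^2$ on $\mathcal{H}$ together with Lemma \ref{lm:expint} to convert exponential weights into polynomial gains in $v/M$. The transport equation \eqref{propeq:Yphi} is then treated as an ODE in $v$ with source $r\partial_v\phi$; a Gr\"onwall argument, in the spirit of the proof of \eqref{eq:idlowboundYphi}, shows that on each slice $\{v=\mathrm{const}\}$ inside the trapped region $rY\phi$ inherits the horizon bounds of Lemma \ref{lm:rdata} up to a multiplicative factor $1+o(1)$ as $v\to\infty$. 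This is the content of the companion Theorem \ref{thm:approachschw}.

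Third, I would execute the mass-inflation analysis. Combining \eqref{conseq:r1}, \eqref{propeq:r2}, the definition \eqref{Hawking mass} and the substitution $\partial_U\phi=(-\partial_Ur)Y\phi$ yields
\begin{equation*}
\partial_U m = \tfrac{1}{2}\,r\,(-\partial_Ur)\,(2m-r)\,(Y\phi)^2,
\end{equation*}
which, once $m\gg r$, rearranges as
\begin{equation*}
-\frac{d\log m}{d\log r}\bigg|_{v=\mathrm{const}} = (1+o(1))\,(rY\phi)^2.
\end{equation*}
Inserting the two-sided bound for $rY\phi$ from Step 2 and integrating in $\log r$ from $\mathcal{A}$ (where $m=r/2\sim M$) down toward $\{r=0\}$ yields, for sufficiently large $v$,
\begin{equation*}
cM r^{-\rho D_1^2(v/M)^{-2p}} \leq m(U,v) \leq CM r^{-\sigma(v/M)^{-2p}},
\end{equation*}
where $\rho$ depends only on $M$ (since it is fixed by the lower bound on $rY\phi$ whose coefficient involves $D_1$ alone) and $\sigma$ depends additionally on $D_2$, $D_3$. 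Combined with Christodoulou's inequality \eqref{mass inequality} this gives the lower Kretschmann estimate. The upper estimate follows from a spherically symmetric algebraic decomposition of the form
\begin{equation*}
R^{\alpha\beta\mu\nu}R_{\alpha\beta\mu\nu} = \frac{48 m^2}{r^6} + \bigl(\text{terms polynomial in } \widehat\Omega^{-2}\partial_U\phi\,\partial_v\phi\bigr),
\end{equation*}
where the propagated scalar-field bounds from Step 2 show that the additional term is dominated by $48 m^2/r^6$.

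The technical heart of the proof is Step 3, and the main obstacle is to ensure that the lower-bound coefficient $\rho D_1^2$ is not polluted by the larger constants $D_2,D_3$ entering through the nonlinear error terms. This requires, first, that the pointwise lower bound on $rY\phi$ be preserved through propagation by a Gr\"onwall argument whose exponential error contributes $1+O((v/M)^{-\delta})$ for some $\delta>0$, and second, that the nonlinear errors in the mass-inflation equation act as multiplicative perturbations of comparable size. A secondary difficulty is that because the exponent in the mass bound is itself $v$-dependent, the analysis must proceed slice-by-slice and be assembled uniformly in $v\geq v_1$, which relies on the red-shift gain from Step 2 dominating all polynomial data losses --- this is where the quantitative relation $q<3p-1$ between the two tail rates enters.
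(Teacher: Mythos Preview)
Your Steps 1 and 2 track the paper's Section~\ref{Section7} closely, and your mass-inflation route to the \emph{lower} Kretschmann bound is valid and essentially equivalent to the paper's: the paper obtains two-sided control of $r\Omega^2$ near $r=0$ (Proposition~\ref{Omega 03}) and then reads off the mass bound (Proposition~\ref{mass inflation}), whereas you integrate the Hawking-mass transport identity directly. Since $m\sim \Omega^{-2}\partial_ur\partial_vr$ and $-r\partial_ur,-r\partial_vr\to M$, these two computations are interchangeable, and combining either with \eqref{mass inequality} gives the lower bound.

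Your \emph{upper} bound argument, however, has a real gap. The identity you invoke,
\[
R^{\alpha\beta\mu\nu}R_{\alpha\beta\mu\nu}=\frac{48m^2}{r^6}+\bigl(\textnormal{terms polynomial in }\Omega^{-2}\partial_U\phi\,\partial_v\phi\bigr),
\]
is not correct as stated. The Weyl--Ricci split gives $K=C^2+2|\mathrm{Ric}|^2-\tfrac13R^2$, and for the scalar field the Ricci part is indeed $\tfrac{5}{3}\cdot 64\,\Omega^{-4}(\partial_u\phi\partial_v\phi)^2$; but $C^2$ in spherical symmetry is \emph{not} exactly $48m^2/r^6$ once matter is present --- $\Psi_2$ picks up scalar-field corrections, and unpacking $C^2$ brings back second derivatives of the metric ($\partial_u^2r$, $\partial_v^2r$, $\partial_u\partial_v\Omega$, $\partial_u\log\Omega\cdot\partial_v\log\Omega$) that are \emph{not} all expressible in terms of $m$ and $\Omega^{-2}\partial_u\phi\partial_v\phi$. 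The paper therefore does \emph{not} argue via domination by $48m^2/r^6$: it writes out the full Kretschmann expression \eqref{Kretschmann2} and bounds each term separately using the pointwise estimate on $\Omega^{-2}$ from Proposition~\ref{Omega 03} together with the $r$-blow-up rates of $\partial_u\log\Omega$, $\partial_v\log\Omega$, $\partial_u\partial_v\Omega$, $\partial_u^2r$, $\partial_v^2r$ from \cite{AZ}. Indeed, the dominant contribution in that computation is the $\Omega^{-8}(\Omega\partial_u\partial_v\Omega)^2$ block, which produces an exponent four times the one in $\Omega^{-2}$ --- strictly larger than the $2\times$ you would get from $m^2/r^6$. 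So the ``extra'' terms are not subleading with the estimates available; to close the upper bound you must either carry out the paper's term-by-term argument or prove a sharp cancellation in the Weyl part that the paper does not claim.
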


\begin{remark}
While Theorem \ref{thm:precisemain} applies in particular to the interior of dynamical black hole spacetimes of Christodoulou \cite{DC91} with the additional assumption of quantitative upper and lower bounds on $\phi$ along the event horizon, it actually provides a self-contained treatment of the black hole interior regions corresponding to dynamical black holes spacetimes approaching Schwarzschild.
\end{remark}

The bounds in Theorem \ref{thm:precisemain} rely on upper and lower bounds for the triple $(r,\Omega^2,\phi)$ and its derivatives. These are summarized in Theorem \ref{thm:approachschw} below. Theorem \ref{thm:precisemain} follows directly from Proposition \ref{thm 8.1}.
\begin{theorem}
\label{thm:approachschw}
Let $(r, \widehat{\Omega}^2, \phi)$ denote the solution from Theorem \ref{thm:precisemain}. Then there exist constants $c,C,\rho>0$ depending only on $M$, a constant $\sigma>0$ depending on $M$, $D_2$ and $D_3$, and $v_1>v_0$ suitably large, such that the following bounds hold in $(0,U_0]_U\times [v_1,\infty)_v\cap \{r>0\}$, with $u=-4M \log(\frac{4M}{U})$ and $\Omega^2=\frac{dU}{du}\widehat{\Omega}^2$:
\begin{align*}
|r\partial_ur+M|(u,v)\leq&\: M(r/M)^{\frac{1}{100}}+ CM (v/M)^{-p},\\
|r\partial_vr+M|(u,v)\leq&\: M(r/M)^{\frac{1}{100}}+ CM (v/M)^{-p},\\
 c M \left(\frac{r}{M}\right)^{\sigma (v/M)^{-2p}} \leq r\Omega^2(u,v)\leq&\: C M \left(\frac{r}{M}\right)^{\rho D_1^2 (v/M)^{-2q}} &&\textnormal{in $\left\{r\leq \frac{1}{10}M\right\}$},\\
  c M \left(\frac{r}{M}\right)^{-\rho D_1^2 (v/M)^{-2q}} \leq m(u,v) \leq&\: C M \left(\frac{r}{M}\right)^{-\sigma (v/M)^{-2p}},\\
\rho D_1 M(v/M)^{-q} \leq  |r^2\partial_v\phi|(u,v)\leq&\:  \sigma M (v/M)^{-p}\\
\rho D_1 M(v/M)^{-q} \leq  |r^2\partial_u\phi|(u,v)\leq&\: \sigma M(v/M)^{-p} &&\textnormal{in $\left\{r\leq \frac{1}{10}M\right\}$},\\
\rho D_1 (v/M)^{-q} \leq  |rY\phi|(u,v)\leq&\:  \sigma (v/M)^{-p} &&\textnormal{in $\left\{r>\frac{1}{10}M\right\}$}.
\end{align*}
\end{theorem}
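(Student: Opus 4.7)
The plan is a continuity/bootstrap argument in the region $(0,U_0]_U \times [v_1,\infty)_v \cap \{r>0\}$ that simultaneously controls the deviation of each of the eight displayed quantities from its Schwarzschild value ($r\partial_u r_S, r\partial_v r_S \to -M$, $m_S \equiv M$, $r\Omega_S^2 \to 2M$). I would enlarge each claimed estimate by a fixed factor as bootstrap assumption and improve it strictly using the propagation and constraint equations, closing the bootstrap for $v_1$ sufficiently large in $M, D_1, D_2, D_3$. On the ``near-horizon'' slab $\{r > M/10\}$, which has bounded $u$-thickness, all eight bounds follow from a perturbative argument around Schwarzschild using the sharp initial values of Lemma \ref{lm:rdata}, in particular the lower bound $r^2 Y\phi|_{H_0} \geq 4M(1-\epsilon)D_1(v/M)^{-q}$.

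The main work occurs in $\{r \leq M/10\}$, where I would improve the estimates in the order: (1) $r\partial_u r, r\partial_v r$; (2) the scalar-field quantities $r^2\partial_u\phi, r^2\partial_v\phi, rY\phi$; (3) $r\Omega^2$ and $m$. For (1), integrate \eqref{propeq:r1}--\eqref{propeq:r2} in $U$ or $v$: the bootstrap on $r\Omega^2$ controls $\int \Omega^2$ and yields the $(r/M)^{1/100}$ contribution, while Lemma \ref{lm:rdata} supplies the $(v/M)^{-p}$ contribution via the initial data. For (2), integrate \eqref{propeq:phi2} and \eqref{propeq:Yphi} and apply Gr\"onwall, the coefficient $\Omega^2/(-\partial_u r)$ being controlled by step (1); the lower bound for $r^2 Y\phi$ propagates forward since all corrections are strictly subleading as $v_1 \to \infty$. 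For (3), the Hawking mass identity rearranges to
\[
r\Omega^2 = \frac{-4\,(r\partial_u r)(r\partial_v r)}{2m - r},
\]
reducing control of $r\Omega^2$ to control of $m$. Differentiating $m$ and substituting the Raychaudhuri equations \eqref{conseq:r1}--\eqref{conseq:r2} together with \eqref{propeq:r1}--\eqref{propeq:r2} gives the mass-inflation identities
\[
\partial_u m = -2r^2\Omega^{-2}(\partial_u\phi)^2 \partial_v r, \qquad \partial_v m = -2r^2\Omega^{-2}(\partial_v\phi)^2\partial_u r,
\]
both nonnegative in the trapped region. Using the step-(1) bound $r\partial r \sim -M$ together with the step-(3) relation $r\Omega^2 \sim 2M^2/m$, the first identity simplifies along constant-$v$ slices (and the second analogously along constant-$u$ slices) to
\[
\frac{d(\log m)}{d(\log r)} \sim -\frac{(r^2\partial_u\phi)^2}{M^2},
\]
whose right-hand side lies in $[-\sigma(v/M)^{-2p}, -\rho D_1^2 (v/M)^{-2q}]$ by step (2). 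Integrating from $r \sim M$ down to the current $r$ produces the stated power laws for $m$ and hence for $r\Omega^2$; the universal constant $\rho$ arises because the $(v/M)^{-2q}$ coefficient is $D_1^2$ times a purely geometric factor, while $\sigma$ absorbs $D_2, D_3$.

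The chief obstacle is the tight mutual coupling in the bootstrap: $\Omega^2$ sources the $\partial r$ equations, while $\partial_u r, \partial_v r$ appear in the Hawking-mass identity used to control $\Omega^2$. Closing the loop demands that the exponent $\rho D_1^2 (v/M)^{-2q}$ in the $r\Omega^2$ bootstrap remain much smaller than the $1/100$ margin in the $r\partial r$ bounds (true as $v_1 \to \infty$) and careful Gr\"onwall book-keeping as $r \downarrow 0$ so that logarithmic losses do not spoil the sharp exponents. Cleanly separating the universal $\rho$ from the data-dependent $\sigma$, propagating the sharp lower bound $D_1(v/M)^{-q}$ through the Gr\"onwall step without degrading the constant, and maintaining simultaneous consistency across all eight estimates is the most intricate part of the argument.
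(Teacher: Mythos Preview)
Your overall strategy is sound and close in spirit to the paper, but the execution differs in a few structural ways worth noting.

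\textbf{Near-horizon region.} The slab $\{r>M/10\}$ does \emph{not} have bounded $u$-thickness: it extends to the event horizon at $u=-\infty$. The paper handles this by splitting into a \emph{red-shift region} $\{v-|u|\le -\delta^{-1}\}$, where the exponential weight $e^{\frac{1}{4M}(v-|u|)}$ gives integrability in $u$, and a \emph{no-shift region} where indeed $|v-|u||$ is bounded. Your ``perturbative argument around Schwarzschild'' is the right idea, but it requires this red-shift mechanism to close; a naive Gr\"onwall in $u$ would diverge.

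\textbf{Decoupling the $r\partial r$ estimate from $\Omega^2$.} You propose to control $\int\Omega^2$ in step~(1) via a bootstrap on $r\Omega^2$. The paper instead exploits the Raychaudhuri monotonicity $\partial_u\bigl(\Omega^2/(-\partial_ur)\bigr)\le 0$, which bounds $\Omega^2/(-\partial_ur)$ by its value on $\{r=r_0\}$ \emph{without} any bootstrap. This decouples a crude estimate $|r\partial_vr+M|\le\epsilon M$ from everything else and breaks the circularity you identify as the chief obstacle. Your simultaneous bootstrap could in principle be made to work, but the monotonicity shortcut is much cleaner.

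\textbf{The $(r/M)^{1/100}$ term.} In the paper this comes from a \emph{local} variation estimate (Proposition~5.1 of \cite{AZ}): $|r\partial_ur(u,v)-r\partial_ur(u_{\tilde q},v_{\tilde q})|\le M(r/M)^{1/100}$ in the causal past of any singular point $\tilde q$. The $(v/M)^{-p}$ term then bounds the \emph{limit} at $r=0$ separately (Theorem~\ref{partial r}), via a refined integration of $\partial_u(r\partial_vr+M-r/2)$ that uses the $\phi$-estimates to show $\bigl|\Omega^2/(4\partial_ur)+\tfrac12\bigr|$ integrates to $O(r_0^{1/2}r^{1/2})+O(r_0(v/M)^{-2p})$. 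Your bootstrap on $r\Omega^2$ alone would not directly produce the $(r/M)^{1/100}$ power; you would need either this local input or an equivalent refined integration.

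\textbf{Route for $r\Omega^2$ and $m$.} Your step~(3) via the mass-evolution ODE $d(\log m)/d(\log r)\sim -(r^2\partial_u\phi)^2/M^2$ is a legitimate and rather elegant alternative. The paper instead integrates the renormalized equation $\partial_u\partial_v\log(r\Omega^2)=\tfrac14 r^{-2}\Omega^2-2\partial_u\phi\partial_v\phi$ twice (Propositions~\ref{Omega 02}--\ref{Omega 03}), controlling the $\Omega^2/r^2$ term again by Raychaudhuri monotonicity so that the $\partial_u\phi\partial_v\phi$ term drives both the upper and lower bounds on $r\Omega^2$; the mass bounds (Proposition~\ref{mass inflation}) then follow algebraically. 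Both routes arrive at the same power laws, but the paper's renormalized-equation approach keeps the $\Omega^2$ and $m$ estimates modular rather than entangled in a joint bootstrap.
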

Theorem \ref{thm:approachschw} follows directly by combining the results of Theorem \ref{partial r}, Propositions \ref{Omega 03},  \ref{mass inflation}, \ref{prop:lowboundsposr0}, \ref{prop:lowboundsposr} and \ref{phi}.

\begin{remark}
The lower bound for the Hawking mass $m$ in Theorem \ref{thm:approachschw} implies that $m$ blows up at $r=0$, which is a phenomenon known as \emph{mass-inflation}. Note that this contrasts the Schwarzschild setting, where $m$ is constant (and equal to the Schwarzschild mass parameter $M$). Mass-inflation in the dynamical setting can be viewed as the reason why the Kretschmann scalar in the dynamical setting blows up at a stronger rate than in the static Schwarzschild setting.
\end{remark} 

\begin{remark}
{\color{black}In \cite{AZ} it is shown that the quantities $r\partial_u r$ and $ r\partial_v r$ attain finite limits along $\mathcal{S}$ (where $r=0$), so from the first two estimates in Theorem \ref{thm:approachschw} it follows that they moreover approach their corresponding Schwarzschild values:
\begin{align*}
 r\partial_u r|_{\mathcal{S}}(v)\rightarrow -M,\\
 r\partial_u r|_{\mathcal{S}}(v)\rightarrow -M,
\end{align*}
as $v\to \infty$.}
\end{remark} 

\section{Main ideas in the proofs of Theorem \ref{thm:precisemain} and \ref{thm:approachschw}}
As a first step, we establish $L^{\infty}$ estimates for the quantities $(r,\Omega,\phi)$ \emph{away from $r=0$} via a continuity argument. We \emph{assume} bootstrap estimates for $(r,\Omega,\phi)$ in a spacetime region $$D_{v_{\infty},r_0}=[0,U_0]_U\times [v_0,v_{\infty}]\cap\{r\geq r_0\},$$ with $r_0$ an arbitrarily small positive number, which are certainly valid \emph{locally} (i.e.\ for small values of $v_{\infty}$) and we close the bootstrap estimates by \emph{improving} them in $D_{v_{\infty}}$. We then complete the continuity argument by taking $v_{\infty}\to \infty$. 

As a second step, we employ the estimates on $\{r=r_0\}$ to extend $(r,\Omega,\phi)$ to a larger region $D_{\infty,0}=[0,U_0]_U\times [v_0,v_{\infty}]\cap\{r>0\}$ and derive appropriate upper and lower bound estimates (without the need for bootstrap assumptions).

\subsection{Red-shift region}
We consider a region where $v-|u|<-\delta^{-1}$, with $\delta>0$ suitably small. This corresponds to the spacetime region where the \emph{red-shift effect} {\color{black}plays a role}. The red-shift effect has been used in a similar setting in previous works \cite{MD03, MD05c,Luk2016a} to establish smallness and decay of the difference quantities $(r-r_S,\log \frac{\Omega^2}{\Omega^2_s})$ in the red-shift region, together with upper bound estimates on derivatives of $\phi$. We follow an analogous approach to obtain sufficient control on the curve $v-|u|=-\delta^{-1}$. We moreover identify precisely the location of the apparent horizon $\mathcal{A}$ in this region.

\subsection{No-shift region}
Next, we consider a region where $v-|u|>-\delta^{-1}$ but $r\geq r_0$, which we call the \emph{no-shift region}. The estimates in this region deviate from the estimates in the ``no-shift regions'' of \cite{MD03, MD05c,Luk2016a} and other arguments in the Einstein--Maxwell--scalar field setting as we can take $r_0$ arbitrarily small and $r$ will attain arbitrarily small values. In this region, we derive new estimates to identify the leading-order behaviour of (derivatives of) $(r,\Omega^2,\phi)$ in $r$ with error terms that decay in $v$ and $|u|$.

The upper bound estimates in the red-shift and no-shift region allow us to complete the continuity argument, as discussed above.

\subsection{Lower bounds}
We subsequently establish additional lower bound estimates in $\{r\geq r_0\}$ for the derivatives of $\phi$, {\color{black}propagating the lower bounds along $\mathcal{H}$}. Here it is important to keep precise track of the dependence of our estimates on $r_0$.

\subsection{Small-$r$ region}
The estimates in this region may be interpreted as \emph{global versions} of {\color{black}the} main local estimates in \cite{AZ}, {\color{black} incorporating appropriate $u$-weights and $v$-weights that reflect the decay behaviour along $\{r=r_0\}$ derived in the previous steps.} To calculate the precise {\color{black}value} for {\color{black}the} blow-up rate of {\color{black}the} Kretschmann scalar, given initial data, we trace the change of every quantity very carefully.  We employ renormalized equations to trace the changes of lower order terms. For example, in the small-$r$ region, to estimate $r\partial_v r$ and $r\partial_u r$. We prove \textbf{Proposition} \ref{partial r}:

\noindent For $(u,v)\in J^-(\t q)$, with $|u|\geq |u_1|+2v_{r=0}(u_1)$ and $r(u,v)\leq r_0$, we have
\begin{equation}\label{d 1.11}
|r\partial_u r(u,v)+M-f_1(u_{\t q})|\leq M[M^{-1}r(u,v)]^{\f{1}{100}},
\end{equation}
\begin{equation}\label{d 1.12}
|r\partial_v r(u,v)+M-f_2(v_{\t q})|\leq M[M^{-1}r(u,v)]^{\f{1}{100}},
\end{equation}
where 
\begin{align*}
|f_1(u_{\t q})|\lesssim& M (M^{-1}v_{\t q)^{-2p}}\lesssim M|M^{-1}[u_{\t q}-u_1-v_0(u_1)]|^{-2p}, \\
|f_2(v_{\t q})|\lesssim& M|M^{-1}[u_{\t q}-u_1-v_0(u_1)]|^{-2p} \lesssim M (M^{-1}v_{\t q})^{-2p}.
\end{align*}

\begin{center}
\begin{minipage}[!t]{0.4\textwidth}
\begin{tikzpicture}[scale=0.95]
\draw [white](-1, 0)-- node[midway, sloped, above,black]{$\big(u_1, v_0(u_1)\big)$}(1, 0);
\draw [white](2.9, 0)-- node[midway, sloped, above,black]{$\t q$}(3.1, 0);
\draw [white](1.7, 0)-- node[midway, sloped, above,black]{$\mathcal{S}$}(2, 0);
\draw [white](5.5, 0.2)-- node[midway, sloped, above,black]{$i^+$}(7, 0.2);
\draw (0,0) to [out=-5, in=195] (5.5, 0.5);
\draw [white](0, -3)-- node[midway, sloped, above,black]{$\mathcal{H}$}(7, -3);
\draw [white](0, -2.8)-- node[midway, sloped, above, black]{$r=r_0$}(0.5,-2.8);
\draw [white](-2.5, -2.5)-- node[midway, sloped, above, black]{$u=u_1$}(0,0);
\draw [white](3.58, -0.70)-- node[midway, sloped, below, black]{$\t q_2$}(3.78, -0.70);
\draw [white](1.29, -1.79)-- node[midway, sloped, below, black]{$\t q_1$}(1.38, -1.79);
\draw [thick] (-2.5,-2.5)--(0,-5);
\draw [dashed](-2.5, -2.5)--(0,0);
\draw [thick] (5.5, 0.5)--(0,-5);
\draw (-2,-2) to [out=-5, in=215] (5.5, 0.5);
\draw[fill] (0,0) circle [radius=0.08];
\draw[fill] (5.5, 0.5) circle [radius=0.08];
\draw[fill] (-2,-2) circle [radius=0.08];
\draw[fill] (3.68,-0.68) circle [radius=0.08];
\draw[fill](1.29, -1.71) circle [radius=0.08];
\fill[black!30!white] (2.47, -0.53)--(3,0)--(3.53, -0.53)--(3,-1.06);
\draw [thin](1.29, -1.71)--(3,0);
\draw[fill] (3,0) circle [radius=0.08];
\draw [thin](2.47, -0.53)--(3,-1.06);
\draw [thin](3.53, -0.53)--(3,-1.06);
\draw [thin](3.68, -0.68)--(3,0);
\draw[fill] (3, -1.06) circle [radius=0.08]; 

\end{tikzpicture}
\end{minipage}
\begin{minipage}[!t]{0.5\textwidth}
\end{minipage}
\hspace{0.05\textwidth}
\end{center}
Here deriving the decay rates of $f_1(u_{\t q})$ and $f_2(v_{\t q})$ is crucial for the proof. Instead of using equation $\partial_u(r\partial_v r+M)$, we use equation of $\partial_u(r\partial_v r+M-r/2)$ to trace the lower order term and to explore the cancellations. 

Similarly, in \textbf{Proposition} \ref{Omega 02} and \textbf{Proposition} \ref{Omega 03}, we prove sharp estimates for both the lower and upper bounds of $\O^2(u,v)$:
for $(u,v)$, with $|u|\geq |u_1|+2v_0(u_1)$ and $r(u,v)\leq r_0$, it holds {\color{black}that}
$$\f{1}{[M^{-1}r(u,v)]^{1-{\f{6\t D_1 \t D_2}{1-\epsilon}\cdot\f{1}{M^{-2p}|u-u_1+v_0(u_1)|^{2p}}}}} \lesssim\O^2(u,v)\lesssim  \f{1}{[M^{-1}r(u,v)]^{1-{\f{D_1' D_2'}{6(1+\e)}\cdot\f{1}{M^{-2p}|u-u_1+v_{r=0}(u_1)|^{2q}}}}},$$
where $0\leq \e\ll 1$ and $\t D_1, \t D_2, D'_1, D'_2$ constants {\color{black}defined} in Proposition \ref{phi}. To obtain the inequalities above, instead of using equation for $\partial_u \partial_v \log(\O^2)$, we use renormalized equations of $\partial_u \partial_v \log(r\O^2)$ and $\partial_u \partial_v \log(r^2 \O^2)$ to cancel some borderline terms. \textit{The use of these renormalized equations is new}.

By {\color{black} applying an} algebraic calculation {\color{black}from} \cite{DC91}, the Kretschmann scalar obeys
$$R^{\a\b\mu\nu}R_{\a\b\mu\nu}(u,v)\geq \f{32 m(u,v)^2}{r(u,v)^6}.$$
Here the Hawking mass $m(u,v)=\f{r}{2}(1+4\Omega^{-2}\partial_u r \partial_v r)$ is defined in \eqref{Hawking mass}. Clearly, the sharp upper bound on $\O^2(u,v)$ would lead to a sharp lower bound on the Kretschmann scalar. And it is \underline{different} from the Schwarzschild value.

\section{Estimates away from the singularity}\label{Section7}

\subsection{Bootstrap assumptions}
Let $\frac{r_0}{M}$ be an arbitrarily small number such that
\begin{equation*}
r_0< \inf_{0\leq U\leq U_0}r(U,v_0).
\end{equation*} 

By a standard local existence argument, there exist a smooth solution $(r,\hat{\Omega},\phi)$ satisfying the system of equations \eqref{propeq:r1}--\eqref{propeq:Omega},  \eqref{conseq:r1}--\eqref{propeq:phi2}, with initial data as prescribed in Section \ref{sec:initialdata}, in $[0,U_0]\times [v_0,v_0+\epsilon)$, for $\epsilon>0$ suitably small and hence the set
\begin{equation*}
D_{v_{\infty},r_0}=[0,U_0)\times [v_0,v_{\infty})\cap\{r\geq r_0\}
\end{equation*}
is well-defined and non-empty for $v_{\infty}$ suitably small. We will now assume $D_{v_{\infty},r_0}$ is well-defined for some $v_{\infty}>v_0$, with $(r,\hat{\Omega},\phi)$ a smooth solution.

We moreover denote the apparent horizon in $D_{v_{\infty},r_0}$ as follows:
\begin{equation*}
\mathcal{A}=\{\partial_vr=0\}\cap D_{v_{\infty},r_0}.
\end{equation*}

By \eqref{propeq:r1} it follows that $\mathcal{A}$ has only (possible empty) spacelike or outgoing null segments.

\begin{lemma}
Suppose $(r,\Omega^2,\phi)$ is a well-defined smooth solution to the system \eqref{propeq:r1}--\eqref{propeq:Omega}, \eqref{conseq:r1}--\eqref{propeq:phi2} in $D_{v_{\infty},r_0}$ corresponding to initial data as prescribed in Section \ref{sec:initialdata}. Then, for $U_0$ suitably small, 
\begin{equation*}
\gamma_{r_0}:=\{r=r_0\}\cap D_{v_{\infty},r_0}
\end{equation*}
is either empty or a connected, spacelike curve intersecting $\{U=U_0\}$ at $v>v_0$, and, if non-empty, we can write
\begin{equation*}
D_{v_{\infty},r_0}=[0,U_0)\times [v_0,v_{\infty})\cap J^-(\gamma_{r_0}).
\end{equation*}
\end{lemma}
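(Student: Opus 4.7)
The plan is to first establish the sign properties $\partial_U r < 0$ throughout $D_{v_\infty, r_0}$ and $\partial_v r < 0$ on $\gamma_{r_0}$, and then exploit the resulting strict monotonicity of $r$ in $U$ to derive the geometric claims.

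For the first sign, I would use the Raychaudhuri equation \eqref{conseq:r1} in the form $\partial_U(\widehat{\Omega}^{-2}\partial_U r) = -r\widehat{\Omega}^{-2}(\partial_U\phi)^2 \leq 0$, so that $\widehat{\Omega}^{-2}\partial_U r$ is non-increasing along each $v = \text{const}$ slice. The estimate \eqref{eq:estdatadUr} gives $\widehat{\Omega}^{-2}\partial_U r|_{H_0} \approx -\tfrac12$ for $v$ large, and together with \eqref{eq:estdataphi2}, for $U_0$ suitably small, yields strict negativity on both $H_0$ and $\underline{H}_0$. Integrating Raychaudhuri in $U$ then propagates $\partial_U r < 0$ to all of $D_{v_\infty, r_0}$.

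For the second sign, the data on $\underline{H}_0$ is a small perturbation of Schwarzschild interior data, with $\partial_v r_s = -\tfrac12 \Omega_s^2 < 0$ strictly for $U > 0$. Integrating the propagation equation \eqref{propeq:r1} along $\underline{H}_0$ from $(0, v_0)$, where $r\partial_v r$ is small by Lemma \ref{lm:rdata} for $v_0$ large, one obtains $\partial_v r(U, v_0) < 0$ for $U$ beyond a small threshold. The second Raychaudhuri equation \eqref{conseq:r2}, $\partial_v(\widehat{\Omega}^{-2}\partial_v r) \leq 0$, then propagates this sign to the future along constant-$U$ lines. Since $r$ approaches $2M > r_0$ along $H_0$ and exceeds $r_0$ along $\underline{H}_0$ by the choice of $r_0$, the curve $\gamma_{r_0}$ cannot reach either initial hypersurface and must lie entirely within the region where $\partial_v r < 0$, giving the claim on $\gamma_{r_0}$.

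Combining the two signs, the tangent of $\gamma_{r_0}$ is proportional to $(\partial_v r, -\partial_U r)$ and satisfies $g(\dot\gamma, \dot\gamma) = \widehat{\Omega}^2\partial_U r\,\partial_v r > 0$, so $\gamma_{r_0}$ is spacelike. The strict monotonicity $\partial_U r < 0$ means each line $\{v = \text{const}\}$ meets $\gamma_{r_0}$ in at most one point, so by the implicit function theorem $\gamma_{r_0}$ is a smooth graph $U = F(v)$, hence connected, with $F'(v) = -\partial_v r/\partial_U r < 0$. Because $\gamma_{r_0}$ cannot terminate on $H_0$ or $\underline{H}_0$, tracing $F$ to the smallest $v$ in its domain forces an endpoint on $\{U = U_0\}$ at some $v > v_0$. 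Finally, strict decrease of $r$ in $U$ gives $r(U,v) \geq r_0 \Leftrightarrow U \leq F(v)$, and monotonicity of $F$ in $v$ identifies $\{U \leq F(v)\}$ with $J^-(\gamma_{r_0}) \cap [0, U_0) \times [v_0, v_\infty)$. The main obstacle is the second step: ensuring $\gamma_{r_0}$ lies in the trapped region requires controlling the perturbation of $\partial_v r$ from Schwarzschild along $\underline{H}_0$, which couples the smallness of $U_0$ with the prescribed data size $D_3$ and the cutoff $v_0$.
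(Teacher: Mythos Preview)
Your overall skeleton—establish $\partial_U r<0$ everywhere, then $\partial_v r<0$ on $\gamma_{r_0}$, then read off spacelikeness, connectedness, and the $J^-$ description from the implicit function theorem—matches the paper's, and your steps for $\partial_U r<0$ and the concluding geometric deductions are correct.

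The gap is in your second step. You argue that $\partial_v r(U,v_0)<0$ for $U$ beyond a small threshold $U_*$ and propagate this via \eqref{conseq:r2}, but that only controls the sign of $\partial_v r$ in the strip $\{U>U_*\}$; nothing you have written rules out points of $\gamma_{r_0}$ with $U\le U_*$. The sentence ``\dots cannot reach either initial hypersurface and must lie entirely within the region where $\partial_v r<0$'' is therefore a non sequitur. There is also a tension with the hypothesis ``$U_0$ suitably small'': the paper in fact chooses $U_0$ so small that $\partial_v r(U,v_0)>0$ on \emph{all} of $\underline{H}_0$, by continuity from $\partial_v r(0,v_0)>0$ (see \eqref{eq:estdatadvr}). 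In that regime your threshold $U_*$ lies outside $[0,U_0]$ and the argument is vacuous.

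The paper's fix is simpler, purely qualitative, and needs none of the perturbation control you flag as the ``main obstacle''. Using the Raychaudhuri monotonicity \eqref{conseq:r2} that you already invoke, along each line $U=\mathrm{const}$ the set $\{\partial_v r\ge 0\}$ is an initial $v$-segment $[v_0,v_{\mathcal A}(U)]$; on that segment $r(U,\cdot)$ is non-decreasing, hence $r(U,v)\ge r(U,v_0)>r_0$ by the choice of $r_0$. Thus the non-trapped region $\{\partial_v r\ge 0\}=J^-(\mathcal A)$ lies entirely in $\{r>r_0\}$, which forces $\gamma_{r_0}\subset J^+(\mathcal A)\setminus\mathcal A$ where $\partial_v r<0$. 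With this replacement for your second step, the rest of your argument goes through unchanged.
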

\begin{proof}
First of all, $\partial_vr(U,v_0)>0$ for $|U_0|$ suitably small by $\partial_vr(0,v_0)>0$ and continuity. We also have that $\partial_Ur<0$ in $D_{v_{\infty},r_0}$ by \eqref{propeq:r2}. Hence, $r>r_0$ in $D_{v_{\infty},r_0}\cap \{\partial_vr\geq 0\}=D_{v_{\infty},r_0}\cap J^-(\mathcal{A})$. Since $\{r=r_0\}\subset J^+(\mathcal{A})\setminus \mathcal{A}$ and $\partial_vr,\partial_Ur<0$ in $ J^+(\mathcal{A})\setminus \mathcal{A}$, we can conclude by Rolle's theorem that $\{r=r_0\}$ must be connected and moreover spacelike. By $\partial_vr,\partial_Ur<0$ it moreover follows that $r\geq r_0$ in $J^-(\gamma_{r_0})$.
\end{proof}

It will be more convenient to work with an Eddington--Finkelstein-type coordinate $u$ instead of $U$ in $D_{v_{\infty},r_0}\cap \{U>0\}$, which is related to $U$ as follows: 
\begin{equation}
\label{eq:reluU}
u=-4M \log \left(\frac{4M}{U}\right).
\end{equation}
Accordingly, we define
\begin{equation}
\label{eq:relhatomega}
\Omega^2(u(U),v):=\widehat{\Omega}^2(U,v)\frac{dU}{du}=\widehat{\Omega}^2(U(u),v)e^{\f{-|u|(U)}{4M}}
\end{equation}
and we consider $(r,\Omega^2,\phi)$ as functions of $(u,v)$, satisfying \eqref{propeq:r1}--\eqref{propeq:Omega},  \eqref{conseq:r1}--\eqref{propeq:phi2}. We moreover denote $u_0:=u(U_0)$.

We define for $\delta>0$
\begin{align*}
\mathcal{R}_{\delta}=&\:D_{v_{\infty},r_0}\cap \{v-|u|\leq -\delta^{-1}\},\\
\mathcal{N}_{\delta,r_0}=&\: D_{v_{\infty},r_0}\setminus \mathcal{R}_{\delta},
\end{align*}
and we denote
\begin{equation*}
\gamma_{\delta}:= D_{v_{\infty},r_0}\cap\{v-|u|= -\delta^{-1}\}.
\end{equation*}

We moreover introduce the constant $r_{\delta}:=r_S(u,v_{\gamma_{\delta} (u)})$.
 \begin{figure}[H]
	\begin{center}
\includegraphics[scale=0.5]{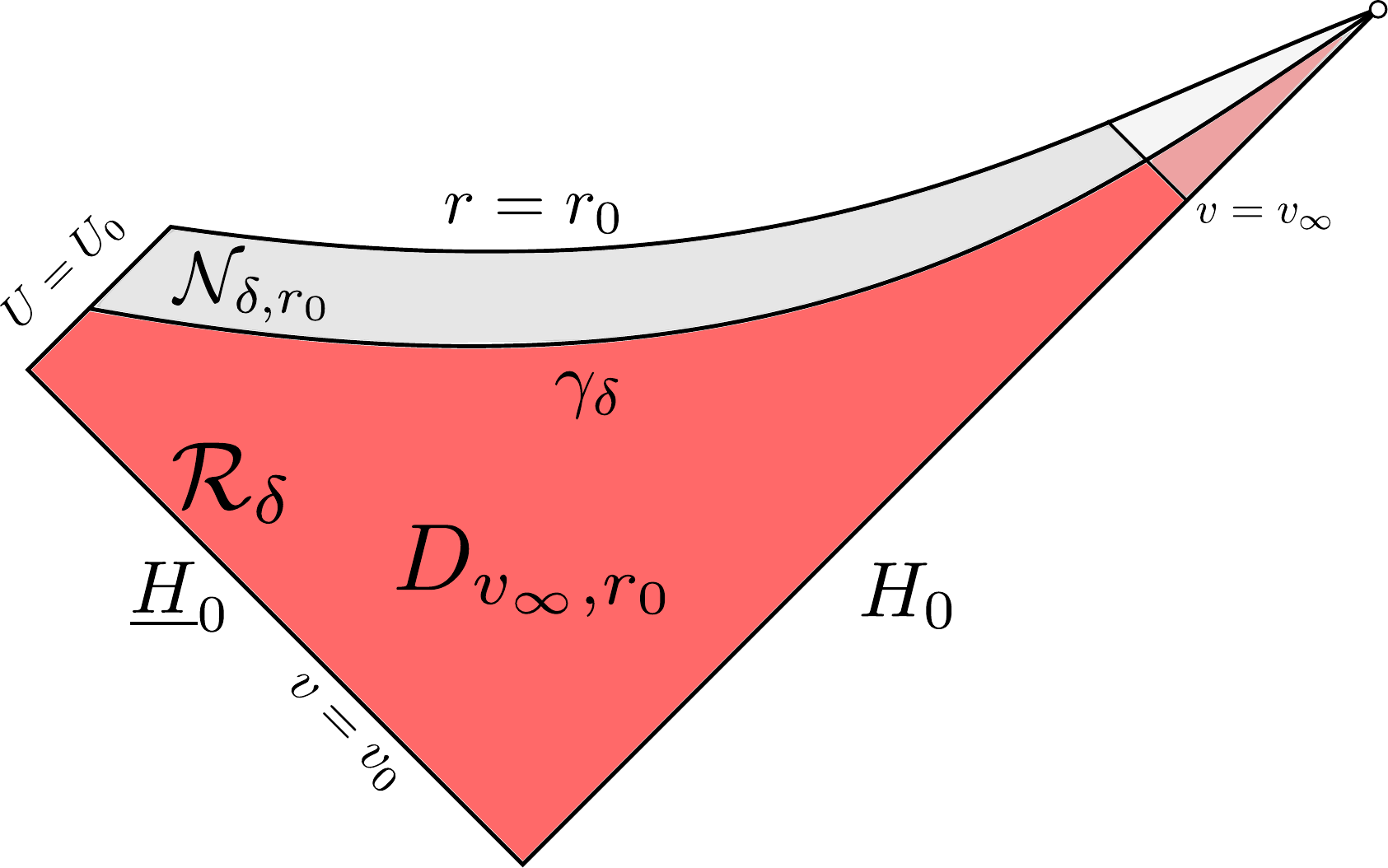}
\end{center}
\vspace{-0.2cm}
\caption{The domain under consideration, away from $r=0$.}
	\label{fig:contour}
\end{figure}

We make the following \textbf{bootstrap assumptions} on the solution $(r,\Omega,\phi)$:
\begin{align}
\label{ba:dvphi}
r^2| \partial_v\phi|(u,v)\leq &\: 2{\Delta_1}M (M^{-1}v)^{-p}\quad \textnormal{for all $(u,v)\in D_{v_{\infty},r_0}$},\\
\label{ba:Yphi} 
r|Y\phi|(u,v)\leq &\: 2{\Delta_1} (M^{-1}v)^{-p}\quad \textnormal{for all $(u,v)\in D_{v_{\infty},r_0}$},\\
\label{ba:Omega}
\left| \partial_u \log \left(\frac{\Omega^2}{\Omega^2_S}\right)\right|\leq&\: \Delta_2 e^{\frac{1}{4M}(v-|u|)}(M^{-1}v)^{-2p+1}\quad \textnormal{for all $(u,v)\in \mathcal{R}_{\delta}\subseteq D_{v_{\infty},r_0}$}.
\end{align}
where $\Delta_1,\Delta_2$ are dimensionless constants that will be chosen suitably large later.

\subsection{Estimates for $r$ and $\Omega^2$} 
\label{sec:estromega}
We will first derive estimates for $r$ and $\Omega^2$, assuming \eqref{ba:dvphi} and \eqref{ba:Yphi}.
\begin{proposition}
\label{prop:metricest}
Assuming \eqref{ba:dvphi} and \eqref{ba:Yphi}, we have that for all $(u,v)\in D_{v_{\infty},r_0}$:
\begin{align}
\label{eq:durest1prop}
\left|- \Omega^{-2}\partial_u r-\frac{1}{2}\right|(u,v)\leq&\:C(\Delta_1,D_2,D_3,v_0) (M^{-1}v)^{-2p+1},\\
\label{eq:dvrestprop}
\left|r \partial_vr +(M-\frac{1}{2}r)\right|(u,v)\leq&\: C(\Delta_1,D_2,D_3,v_0) M (M^{-1}v)^{-2p+1},
\end{align}
where $C(\Delta_1,D_2,D_3,v_0)>0$ is a constant depending on $\Delta_1,D_2,D_3$ and $v_0$.
\end{proposition}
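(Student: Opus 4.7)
\medskip

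\noindent\textbf{Proof plan for Proposition \ref{prop:metricest}.} My plan is to derive the two estimates in turn, propagating inward from the event horizon $H_0$: the first bound from the Raychaudhuri constraint \eqref{conseq:r1} (using only the bootstrap on $Y\phi$), and the second from the propagation equation \eqref{propeq:r2} (using the first bound to replace $\widehat{\Omega}^2$ by $-2\partial_U r$ up to acceptable errors). Throughout, I work in the $(U,v)$ chart, where $\Omega^{-2}\partial_u r=\widehat{\Omega}^{-2}\partial_U r$, and I abbreviate $W:=\widehat{\Omega}^{-2}\partial_U r$; the target of \eqref{eq:durest1prop} is to show $|W+\tfrac12|\leq C(M^{-1}v)^{-2p+1}$.

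For the first estimate, I note that $\partial_U\phi=-\partial_U r\cdot Y\phi$, so the Raychaudhuri equation \eqref{conseq:r1} can be rewritten as $\partial_U W=-rW\partial_U r(Y\phi)^2$, i.e.\ $\partial_U\log|W|=-r\partial_U r(Y\phi)^2$. Integrating along constant $v$ from $U=0$ and changing variables from $U'$ to $r$, I obtain
\[
\log\left|\frac{W(U,v)}{W(0,v)}\right|=\int_{r(U,v)}^{r(0,v)} r(Y\phi)^2\,dr.
\]
The bootstrap \eqref{ba:Yphi} gives $r(Y\phi)^2\leq 4\Delta_1^2 r^{-1}(M^{-1}v)^{-2p}$, so the right-hand side is bounded by $4\Delta_1^2(M^{-1}v)^{-2p}\log(r(0,v)/r(U,v))\leq 4\Delta_1^2(M^{-1}v)^{-2p}\log(2M/r_0)$; this logarithm is a harmless constant since $r_0$ is fixed in Section~\ref{Section7}. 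Combining with Lemma \ref{lm:rdata}, which gives $|W(0,v)+\tfrac12|\leq CD_2^2(M^{-1}v)^{-2p+1}+CD_3 e^{-\frac{1}{4M}(v-v_0)}$, and using $(M^{-1}v)^{-2p}\leq (M^{-1}v_0)^{-1}(M^{-1}v)^{-2p+1}$ for $v\geq v_0$ to convert the $(M^{-1}v)^{-2p}$ rate into the required $(M^{-1}v)^{-2p+1}$ rate, I obtain \eqref{eq:durest1prop} with constant depending on $\Delta_1,D_2,D_3,v_0$.

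For the second estimate, I integrate the propagation equation \eqref{propeq:r2} in $U$ at fixed $v$:
\[
r\partial_v r(U,v)=r\partial_v r(0,v)-\tfrac14\int_0^U \widehat{\Omega}^2(U',v)\,dU'.
\]
Writing $\widehat{\Omega}^2=\partial_U r/W$ and changing variables to $r$, the integral becomes $-\int_{r(U,v)}^{r(0,v)}W^{-1}\,dr$. By the first estimate, $|W^{-1}+2|=2|W+\tfrac12|/|W|\leq C(M^{-1}v)^{-2p+1}$, so
\[
\int_0^U \widehat{\Omega}^2\,dU'=2\bigl(r(0,v)-r(U,v)\bigr)+O\!\bigl(M(M^{-1}v)^{-2p+1}\bigr).
\]
Substituting back and using the Lemma \ref{lm:rdata} estimates $r(0,v)=2M+O(M(M^{-1}v)^{-2p+1})$ together with $r\partial_v r(0,v)\leq 2MCD_2^2(M^{-1}v)^{-2p}=O(M(M^{-1}v)^{-2p+1})$, I conclude $r\partial_v r(U,v)=-M+\tfrac12 r(U,v)+O(M(M^{-1}v)^{-2p+1})$, which is \eqref{eq:dvrestprop}. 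The main subtlety is keeping track of the precise power of $(M^{-1}v)$: the crude bound $(M^{-1}v)^{-2p}$ coming from both the $Y\phi$-bootstrap and the initial data on $\partial_v r$ is one power short of the target rate $(M^{-1}v)^{-2p+1}$, and it is essential that one uses $v\geq v_0>0$ to upgrade these to the claimed rate, absorbing the loss into a $v_0$-dependent constant.
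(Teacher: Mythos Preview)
Your proof is correct and follows essentially the same approach as the paper's: integrating the Raychaudhuri equation \eqref{conseq:r1} in $u$ (changing variables to $r$) for the first estimate, then integrating $\partial_u(r\partial_v r)=-\tfrac14\Omega^2$ and inserting the first estimate for the second. One minor slip: the equation you integrate for the second estimate is \eqref{propeq:r1}, not \eqref{propeq:r2}.
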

\begin{proof}
We can rewrite \eqref{conseq:r1} to obtain
\begin{equation*}
\partial_u\log(- \Omega^{-2}\partial_u r)=\frac{r}{-\partial_ur}(\partial_u\phi)^2=r(-\partial_ur)(Y\phi)^2.
\end{equation*}

By monotonicity of $\partial_ur$ (which follows directly from \eqref{conseq:r1}), we have that $\partial_ur<0$. We can therefore integrate in $u$ and change the integration variable to $r$ to obtain:
\begin{equation*}
\log(- \Omega^{-2}\partial_u r)(u,v)-\log(- \Omega^{-2}\partial_u r)(-\infty,v)=\int_{r(u,v)}^{r_{\mathcal{H}^+}(v)} r(Y\phi)^2\,dr'\leq C {\Delta_1}^2 (M^{-1}v)^{-2p},
\end{equation*}
where we moreover applied \eqref{ba:Yphi} and we used that $r$ is bounded, by \eqref{eq:estdatar} and monotonicity of $\partial_ur$.

By applying \eqref{eq:estdatadUr} and using that $\Omega^{-2}\partial_ur =\widehat{\Omega}^{-2} \partial_Ur$, we therefore obtain
\begin{equation*}
\begin{split}
\left|- \Omega^{-2}\partial_u r(u,v)-\frac{1}{2}\right|\leq &\:CD_2^2 (M^{-1}v)^{-2p+1}+CD_3 e^{-\frac{1}{4M}v}+(e^{C {\Delta_1}^2 D_2^2 (M^{-1}v)^{-2p}}-1).
\end{split}
\end{equation*}
The estimate \eqref{eq:durest1prop} follows by taking $C=C(\Delta_1,D_2,D_3,v_0)>0$ to be suitably large.

Subsequently, we integrate \eqref{propeq:r1} in $u$ to obtain
\begin{equation*}
\begin{split}
r\partial_vr(u,v)=&\:r\partial_vr(-\infty,v)+\int_{-\infty}^u\frac{-\frac{1}{4}\Omega^2}{\partial_u r} \partial_u r du'\\
=&\:r\partial_vr(-\infty,v)-\frac{1}{2}(r_{\mathcal{H}^+}(v)-r(u,v))-\int^{r_{\mathcal{H}^+}(v)}_r \left[-\frac{1}{4}{\Omega^2}({\partial_u r})^{-1}-\frac{1}{2}\right]\,dr'.
\end{split}
\end{equation*}
Hence,
\begin{equation*}
\left|r \partial_vr+M-\frac{1}{2}r\right|\leq r\partial_vr(-\infty,v)+ \frac{1}{2}(2M-r_{\mathcal{H}^+}(v))+\int^{r_{\mathcal{H}^+}(v)}_r \left|-\frac{1}{4}{\Omega^2}({\partial_u r})^{-1}-\frac{1}{2}\right|\,dr',
\end{equation*}
so we can apply Lemma \ref{lm:rdata} and \eqref{eq:durest1prop} to obtain \eqref{eq:dvrestprop}.
\end{proof}

In the following proposition, we derive additional estimates for the difference quantities $r-r_S$ and $\frac{\Omega^2}{\Omega^2_S}$ in the region $\mathcal{R}_{\delta}$. In contrast with Proposition \ref{prop:metricest}, we will additionally make use of the bootstrap assumption \eqref{ba:Omega}.

\begin{proposition}
\label{lm:metricredshift}
There exist a suitably large $C(\Delta_1,D_2,D_3,v_0)>0$, such that for any $\epsilon>0$ and $\delta=\delta(\epsilon)>0$ appropriately small:
\begin{align}
\label{eq:redshiftOmega}
\left|\frac{\Omega^2}{\Omega^2_S}-1\right|\leq&\: C\epsilon  \Delta_2 (M^{-1}v)^{-2p+1},\\
\label{eq:redshiftdur}
|\partial_u(r-r_S)| \leq&\: C\epsilon \Delta_2 e^{\frac{1}{4M}(v-|u|)}  (M^{-1}v)^{-2p+1}+C(\Delta_1,D_2,D_3,v_0)e^{\frac{1}{4M}(v-|u|)}(M^{-1}v)^{-2p+1},\\
\label{eq:redshiftr}
|r-r_S| \leq&\:  C\epsilon \Delta_2 Me^{\frac{1}{4M}(v-|u|)} (M^{-1}v)^{-2p+1}+C(\Delta_1,D_2,D_3,v_0) M (M^{-1}v)^{-2p+1},\\
\label{eq:redshiftdvr}
|\partial_v(r-r_S)| \leq&\:  C\epsilon \Delta_2 e^{\frac{1}{4M}(v-|u|)} (M^{-1}v)^{-2p+1}+C(\Delta_1,D_2,D_3,v_0)(M^{-1}v)^{-2p+1},
\end{align}
for all $(u,v)\in \mathcal{R}_{\delta}$.

We moreover have that
\begin{equation}
\label{eq:redshiftOmega2}
\left|\log \left(\frac{r^2\Omega^2}{2Mr-r^2}\right)\right|\leq C\epsilon  \Delta_2 Me^{\frac{1}{4M}(v-|u|)} (M^{-1}v)^{-2p+1}+C(\Delta_1,D_2,D_3,v_0)(M^{-1}v)^{-2p+1}\quad \textnormal{on $\gamma_{\delta}$}.
\end{equation}
\end{proposition}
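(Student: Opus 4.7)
\medskip

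\noindent\textbf{Proof plan.} The strategy is to propagate the bootstrap bound \eqref{ba:Omega} from the event horizon $\{U=0\}$ (where the gauge conditions \eqref{gaugeeq:Omega1}--\eqref{gaugeeq:Omega2} force $\Omega^2/\Omega^2_S=1$) into the red-shift region, and then cascade these bounds first to $\partial_u(r-r_S)$, then to $r-r_S$, and finally to $\partial_v(r-r_S)$. In the $u$-coordinate $\{U=0\}$ corresponds to $u=-\infty$, so I would first write
\begin{equation*}
\log\!\left(\frac{\Omega^2}{\Omega^2_S}\right)(u,v)=\int_{-\infty}^{u}\partial_u\log\!\left(\frac{\Omega^2}{\Omega^2_S}\right)(u',v)\,du',
\end{equation*}
insert \eqref{ba:Omega}, and use $\int_{-\infty}^u e^{(v+u')/(4M)}du'=4Me^{(v-|u|)/(4M)}$. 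In $\mathcal{R}_{\delta}$ one has $e^{(v-|u|)/(4M)}\le e^{-\delta^{-1}/(4M)}$, which by choosing $\delta=\delta(\epsilon)$ sufficiently small is $\le \epsilon/M$. Exponentiating and using $|e^x-1|\le 2|x|$ for $|x|$ small then gives \eqref{eq:redshiftOmega}.

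\smallskip

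For \eqref{eq:redshiftdur} I would use the Schwarzschild identity $\partial_ur_S=-\tfrac12\Omega^2_S$ (from Section 2.1) together with the bound $|-\Omega^{-2}\partial_ur-\tfrac12|\le C(\Delta_1,D_2,D_3,v_0)(M^{-1}v)^{-2p+1}$ of Proposition \ref{prop:metricest}, to write
\begin{equation*}
\partial_u(r-r_S)=-\tfrac12(\Omega^2-\Omega^2_S)+\Omega^2\!\cdot\!O\!\bigl((M^{-1}v)^{-2p+1}\bigr).
\end{equation*}
Factoring $\Omega^2-\Omega^2_S=\Omega^2_S(\Omega^2/\Omega^2_S-1)$ and applying \eqref{eq:redshiftOmega}, and using the Schwarzschild pointwise bound $\Omega^2_S\le (2M/r_S)e^{(v-|u|)/(4M)}$ together with the fact that $r_S$ is bounded below by a multiple of $M$ in $\mathcal{R}_\delta$, yields \eqref{eq:redshiftdur}. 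Integrating in $u$ from $u=-\infty$, where $|r-r_S|(0,v)\le CMD_2^2(M^{-1}v)^{-2p+1}$ by \eqref{eq:estdatar}, and once again using the exponential integral $\int_{-\infty}^u e^{(v-|u'|)/(4M)}du'=4Me^{(v-|u|)/(4M)}$, gives \eqref{eq:redshiftr}. For \eqref{eq:redshiftdvr} I would combine $\partial_v r_S=-\tfrac12\Omega^2_S=-M/r_S+1/2$ with the $r\partial_vr$ estimate from Proposition \ref{prop:metricest} to obtain $\partial_v(r-r_S)=M(r-r_S)/(rr_S)+O((M^{-1}v)^{-2p+1})$; substituting \eqref{eq:redshiftr} and using $rr_S\gtrsim M^2$ on $\mathcal{R}_\delta$ closes the estimate.

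\smallskip

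The last estimate \eqref{eq:redshiftOmega2} on $\gamma_\delta$ is the delicate one, because it recombines $\Omega^2$ with $2M-r$ and the latter is small on $\gamma_\delta$. I would split
\begin{equation*}
\log\!\left(\frac{r^2\Omega^2}{2Mr-r^2}\right)=\log\!\left(\frac{\Omega^2}{\Omega^2_S}\right)+\log\!\left(\frac{r(2M-r_S)}{r_S(2M-r)}\right),
\end{equation*}
using $\Omega^2_S=(2M-r_S)/r_S$. The first term is handled by \eqref{eq:redshiftOmega}. For the second, writing $r=r_S+(r-r_S)$ and Taylor expanding gives a leading term of size $|r-r_S|/r_S+|r-r_S|/(2M-r_S)$. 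The second ratio is the dangerous one: $2M-r_S$ is exponentially small in $\delta^{-1}$ on $\gamma_\delta$. However, on $\gamma_\delta$ the defining relation $\tfrac12(v+u)=r_*(r_S)$ gives $2M-r_S\asymp M e^{(v-|u|)/(4M)}$, so this exponential smallness \emph{exactly} matches the $e^{(v-|u|)/(4M)}$ factor sitting in the first part of the bound \eqref{eq:redshiftr} for $|r-r_S|$; the residual data-error piece $C(\Delta_1,D_2,D_3,v_0)M(M^{-1}v)^{-2p+1}$, when divided by $Me^{-1/(4M\delta)}$, accounts for the non-exponential term in \eqref{eq:redshiftOmega2}. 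This delicate matching, which forces us to re-examine the initial-data contribution to $r-r_S$ in tandem with the precise structure of $r_S$ on $\gamma_\delta$, is the main obstacle and is the reason the final estimate is stated only on $\gamma_\delta$ rather than throughout $\mathcal{R}_\delta$.
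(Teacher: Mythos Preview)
Your approach matches the paper's: integrate the bootstrap assumption \eqref{ba:Omega} in $u$ to get \eqref{eq:redshiftOmega}, then cascade via \eqref{eq:durest1prop}, integration in $u$ with the data bound \eqref{eq:estdatar}, and \eqref{eq:dvrestprop} to obtain \eqref{eq:redshiftdur}--\eqref{eq:redshiftdvr}, and finally combine \eqref{eq:redshiftOmega} with \eqref{eq:redshiftr} and the splitting $\Omega^2_S=(2M-r_S)/r_S$ for \eqref{eq:redshiftOmega2}.

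One remark on your last step: the paper does not treat it as a delicate matching of exponentials. On $\gamma_\delta$ the Schwarzschild radius $r_S$ equals the fixed constant $r_\delta<2M$, so $(2M-r_S)^{-1}=(2M-r_\delta)^{-1}$ is simply a positive $\delta$-dependent number; dividing the bound \eqref{eq:redshiftr} for $|r-r_S|$ by it just rescales the constant. The resulting $\delta$-dependence in the second term of \eqref{eq:redshiftOmega2} is harmless, since the estimate is only ever fed into Proposition~\ref{prop:durdvrr0}, whose constants already carry a $C_{\delta,r_0}$ prefactor. Your exponential bookkeeping is not incorrect, just more elaborate than what is needed.
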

\begin{proof}
We obtain \eqref{eq:redshiftOmega} by integrating \eqref{ba:Omega} in the $u$ direction, using that $\frac{\Omega^2}{\Omega^{2}_S}(-\infty,v)=1$ and
\begin{equation*}
e^{\frac{1}{4M}(v-|u|)}\leq e^{-\frac{1}{\delta}}
\end{equation*}
in $\mathcal{R}_{\delta}$.

By combining \eqref{eq:redshiftOmega} with \eqref{eq:durest1prop}, we obtain \eqref{eq:redshiftdur}. We then integrate \eqref{eq:redshiftdur} and apply \eqref{eq:estdatar} to obtain \eqref{eq:redshiftr}. The estimate \eqref{eq:redshiftdvr} follows by combining \eqref{eq:dvrestprop} with \eqref{eq:redshiftr}. Finally, \eqref{eq:redshiftOmega2} follows from a combination of \eqref{eq:redshiftOmega} with \eqref{eq:redshiftr}, using that $(r_S)_{\gamma_{\delta}}<2M$ and $(r_S)_{\gamma_{\delta}}$ is constant.
\end{proof}

\begin{proposition}
\label{prop:apparenthordecay}
For $|u_0|$ suitably large, $\mathcal{A}\subset \mathcal{R}_{\delta}$ and
\begin{align}
\label{eq:rA1}
|r_{\mathcal{A}}(v)-2M|\leq &\:C(\Delta_1,D_2,D_3,v_0)M (M^{-1}v)^{-2p+1},
\end{align}
\end{proposition}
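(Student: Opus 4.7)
The estimate \eqref{eq:rA1} is immediate: on $\mathcal{A}$ we have $\partial_v r = 0$ by definition, and \eqref{eq:dvrestprop} from Proposition \ref{prop:metricest} then reads $|M - \tfrac{1}{2} r_{\mathcal{A}}(v)| \leq C(\Delta_1, D_2, D_3, v_0) M (M^{-1}v)^{-2p+1}$, which is \eqref{eq:rA1} after multiplying by $2$. No further work is needed for this bound.

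For the inclusion $\mathcal{A} \subset \mathcal{R}_\delta$, the plan is to compare $r_{\mathcal{A}}(v)$ with the value of $r$ on $\gamma_\delta$. On $\gamma_\delta$ the relation $r_*(r_\delta) = \tfrac{1}{2}(v + u) = -\tfrac{1}{2\delta}$ gives that $2M - r_\delta$ is a fixed positive, $v$-independent quantity of size $\asymp M e^{-1/(4M\delta)}$ for $\delta$ suitably small. Using \eqref{eq:redshiftr} with $e^{(v-|u|)/(4M)} = e^{-1/(4M\delta)}$ on $\gamma_\delta$ yields $|r - r_\delta|_{\gamma_\delta(v)} \leq C (\epsilon \Delta_2 e^{-1/(4M\delta)} + 1) M (M^{-1}v)^{-2p+1}$, so $r|_{\gamma_\delta(v)}$ is bounded strictly below $2M$ by an amount independent of $v$. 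Combined with \eqref{eq:rA1}, for $v$ large enough (where ``large'' depends on $\delta$ through the gap $Me^{-1/(4M\delta)}$) we conclude $r_{\mathcal{A}}(v) > r|_{\gamma_\delta(v)}$. Since $\partial_u r < 0$ throughout $D_{v_\infty, r_0}$, this forces $u_{\mathcal{A}}(v) < u_{\gamma_\delta(v)}$, placing $\mathcal{A}$ to the past of $\gamma_\delta$ in $u$, i.e.\ in $\mathcal{R}_\delta$.

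The role of the hypothesis ``$|u_0|$ suitably large'' (equivalently $U_0$ small) is to absorb the above $\delta$-dependent $v$-threshold and to rule out $\mathcal{A}$ at small $v$: first, taking $|u_0| \geq v_0 + \delta^{-1}$ ensures that $\gamma_\delta$ actually enters the domain $D_{v_\infty, r_0}$ from $\{v = v_0\}$; second, \eqref{eq:estdatadvr} on $H_0$ combined with the monotonicity $\partial_u(r \partial_v r) = -\tfrac{1}{4} \Omega^2 < 0$ from \eqref{propeq:r1} forces $\partial_v r > 0$ throughout $\underline{H}_0$ when $U_0$ is sufficiently small, so that $\mathcal{A}$ cannot intersect the initial hypersurface and necessarily lives at $v$-values large enough for the comparison above to apply. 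The main obstacle I anticipate is precisely balancing the exponentially small gap $2M - r_\delta \asymp M e^{-1/(4M\delta)}$ against the polynomially decaying error $(M^{-1}v)^{-2p+1}$ from \eqref{eq:redshiftr}; this balance dictates how large $|u_0|$ (and hence, implicitly, $v_0$) must be taken in terms of $\delta$.
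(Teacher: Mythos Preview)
Your argument is essentially the paper's argument: \eqref{eq:rA1} from \eqref{eq:dvrestprop} with $\partial_v r = 0$, then the comparison of $r_{\mathcal A}(v)\approx 2M$ against $r|_{\gamma_\delta}\approx r_\delta<2M$ via \eqref{eq:redshiftr} to force $\mathcal A$ into $\mathcal R_\delta$. The paper compresses the second step into one sentence, but the ingredients and the logic are the same.

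One geometric slip worth fixing: your claim that ``taking $|u_0|\geq v_0+\delta^{-1}$ ensures that $\gamma_\delta$ enters the domain from $\{v=v_0\}$'' has the inequality the wrong way round. In the domain one has $|u|>|u_0|$, so the point on $\gamma_\delta$ at $v=v_0$, namely $|u|=v_0+\delta^{-1}$, lies in the domain precisely when $|u_0|<v_0+\delta^{-1}$. The cleaner way to use the hypothesis is the opposite observation: for $|u_0|$ large, $\mathcal N_{\delta,r_0}$ is nonempty only where $v>|u_0|-\delta^{-1}$, so any putative point of $\mathcal A$ outside $\mathcal R_\delta$ automatically has $v$ large enough for your comparison inequality $r_{\mathcal A}(v)>r|_{\gamma_\delta}(v)$ to kick in; for $v\leq |u_0|-\delta^{-1}$ the entire slice already sits in $\mathcal R_\delta$ and there is nothing to check. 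This replaces both of your ``first'' and ``second'' points in the last paragraph (the observation that $\partial_v r>0$ on $\underline H_0$ is true but does not by itself push $\mathcal A$ to large $v$).
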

\begin{proof}
We use that $\partial_vr=0$ along $\mathcal{A}$ and apply \eqref{eq:dvrestprop} to obtain \eqref{eq:rA1}. Together with \eqref{eq:redshiftr} and the fact that $(r_S)_{\gamma_{\delta}}<2M$ is constant, this implies that $\mathcal{A}\subset \mathcal{R}_{\delta}$ and taking $\delta>0$ suitably small.
\end{proof}
 \begin{figure}[H]
	\begin{center}
\includegraphics[scale=0.5]{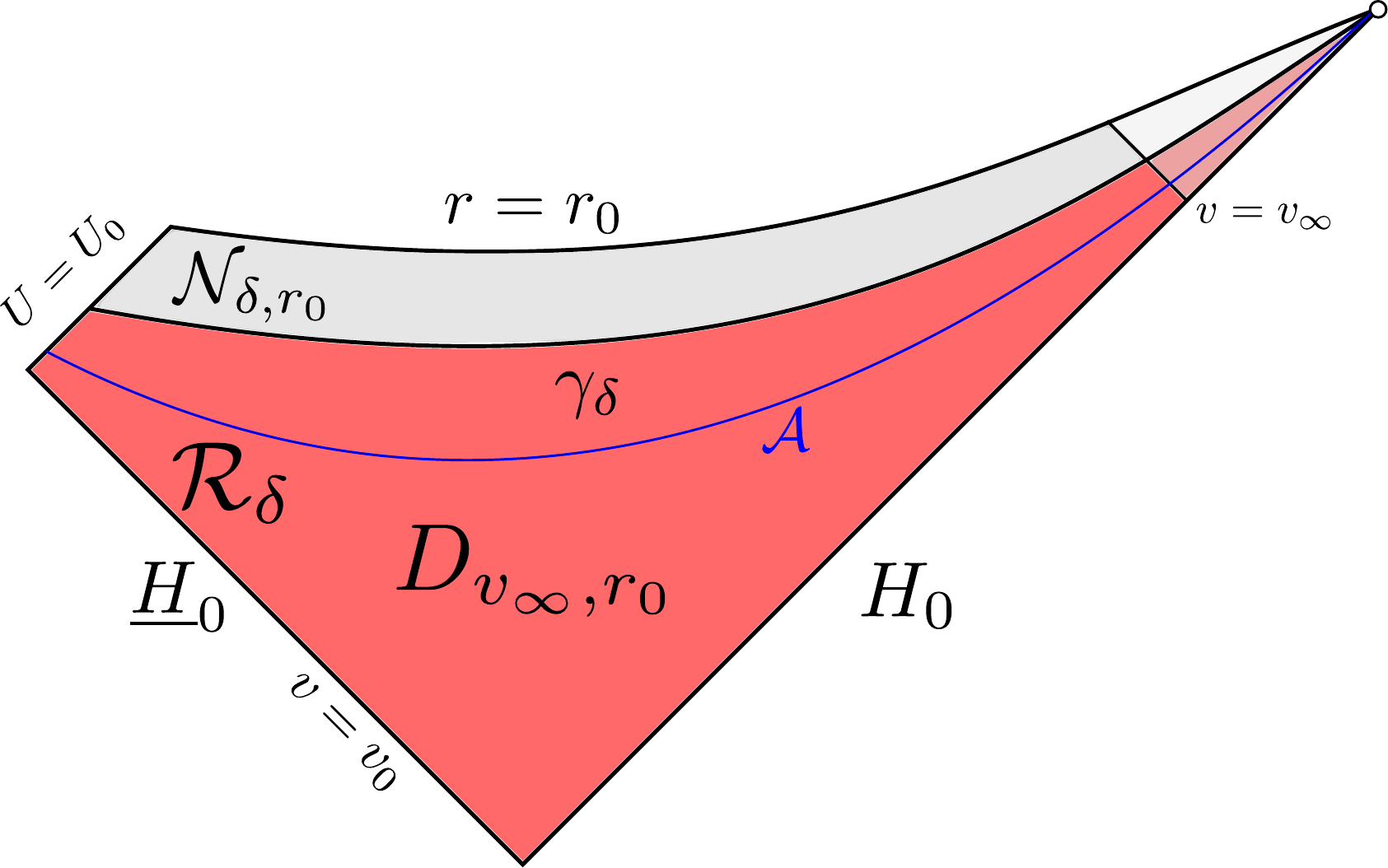}
\end{center}
\vspace{-0.2cm}
\caption{The location of the apparent horizon $\mathcal{A}$.}
	\label{fig:contour}
\end{figure}
\begin{proposition}
\label{prop:durdvrr0} 
We have that in $\mathcal{N}_{\delta, r_0}$
\begin{align}
\label{eq:rescaledOmegaestprop}
\left| \frac{r^2\Omega^2}{2Mr-r^2}-1\right|\leq&\: C_{\delta,r_0}(C(\Delta_1,D_2,D_3,v_0)+\epsilon \Delta_2)(M^{-1}v)^{-2p+2},\\
\label{eq:prelimestOmega}
|\partial_v\log (r^2\Omega^2)(u,v)-r^{-1}+Mr^{-2}|(u,v)\leq&\: C_{r_0}\cdot C(\Delta_1,D_2,D_3,v_0) (M^{-1}v)^{-2p+1},\\
\label{eq:durest2prop}
\left|- \partial_u r- \left(Mr^{-1}-\frac{1}{2}\right)\right|(u,v)\leq&\: C_{\delta,r_0}(C(\Delta_1,D_2,D_3,v_0)+\epsilon \Delta_2)(M^{-1}v)^{-2p+2},\\
\label{eq:ratiodurdvr}
\left|\frac{\partial_u r}{\partial_vr}-1\right|(u,v)\leq&\: C_{\delta,r_0} (C(\Delta_1,D_2,D_3,v_0)+\epsilon \Delta_2)(M^{-1} v)^{-2p+2}.
\end{align}
\end{proposition}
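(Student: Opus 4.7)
The plan is to prove the four estimates in the order \eqref{eq:prelimestOmega}, \eqref{eq:rescaledOmegaestprop}, \eqref{eq:durest2prop}, \eqref{eq:ratiodurdvr}, propagating Schwarzschild closeness inward from two ``boundary'' data sources: the event horizon $\mathcal{H}$ (where $r=2M$, $\partial_v r=0$, and the gauge choice \eqref{gaugeeq:Omega1} gives exact Schwarzschild values of the relevant derivatives) and the curve $\gamma_\delta$ (via Proposition \ref{lm:metricredshift}). A key geometric observation is that $u+v$ lies in the bounded interval $(-\delta^{-1},2r_\ast(r_0)]$ throughout $\mathcal{N}_{\delta,r_0}$, so both the $u$-length at fixed $v$ and the $v$-length at fixed $u$ of this region are uniformly bounded by constants depending only on $\delta$ and $r_0$.

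For \eqref{eq:prelimestOmega}, I would introduce the Schwarzschild-renormalized quantity
\[
\Psi := \partial_v\log(r^2\Omega^2) - (r^{-1}-Mr^{-2}),
\]
which vanishes identically on any Schwarzschild background. A direct computation from \eqref{propeq:Omegarescale2} combined with the chain rule gives the renormalized equation
\[
\partial_u\Psi = -\frac{2\partial_u r}{r^3}\left[r\partial_v r+M-\tfrac{r}{2}\right] - 2\partial_u\phi\,\partial_v\phi.
\]
The cancellation is crucial: the square bracket is precisely the quantity estimated by \eqref{eq:dvrestprop}, and the scalar field product is controlled by the bootstraps \eqref{ba:dvphi} and \eqref{ba:Yphi}. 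On $\mathcal{H}$ one checks directly from the gauge that $\Psi=0$ (using $\partial_v\log\widehat\Omega^2=1/(4M)$ and the value $r^{-1}-Mr^{-2}\bigl|_{r=2M}=1/(4M)$); integrating $\partial_u\Psi$ in $u$ from $-\infty$ and splitting into $\mathcal{R}_\delta$ (where $|\partial_u r|\lesssim e^{(v-|u|)/(4M)}$ via \eqref{eq:redshiftdur} and \eqref{eq:durschwest}, making the integral absolutely convergent with contribution $\lesssim e^{-1/(4M\delta)}$ up to $\gamma_\delta$) and $\mathcal{N}_{\delta,r_0}$ (where the integration interval is uniformly bounded) yields \eqref{eq:prelimestOmega}.

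For \eqref{eq:rescaledOmegaestprop}, I set $\Theta:=\log\bigl(r^2\Omega^2/(2Mr-r^2)\bigr)$. Using $\partial_v\log(2Mr-r^2)=\frac{2(M-r)\partial_v r}{r(2M-r)}$, substituting the expansion of $\partial_v r$ from \eqref{eq:dvrestprop}, and noting that $2M-r$ is bounded below on $\mathcal{N}_{\delta,r_0}$ by Proposition \ref{prop:apparenthordecay} (which places the apparent horizon inside $\mathcal{R}_\delta$), one obtains $\partial_v\log(2Mr-r^2)=r^{-1}-Mr^{-2}+O_{\delta,r_0}((M^{-1}v)^{-2p+1})$; combined with \eqref{eq:prelimestOmega} this makes $|\partial_v\Theta|$ small. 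Integrating at fixed $u$ in $v$ from $\gamma_\delta$ over the uniformly bounded $v$-interval, using \eqref{eq:redshiftOmega2} as initial data, and exponentiating produces \eqref{eq:rescaledOmegaestprop}. The remaining estimates are then algebraic: \eqref{eq:durest2prop} follows by substituting $\Omega^2=(2M/r-1)(1+O(\cdot))$ from \eqref{eq:rescaledOmegaestprop} into $-2\partial_u r=\Omega^2(1+O((M^{-1}v)^{-2p+1}))$ from \eqref{eq:durest1prop}, while \eqref{eq:ratiodurdvr} follows by writing $\partial_u r/\partial_v r-1=(\partial_u r-\partial_v r)/\partial_v r$, using \eqref{eq:dvrestprop} and \eqref{eq:durest2prop} in the numerator, and appealing to the lower bound on $|\partial_v r|$ in $\mathcal{N}_{\delta,r_0}$ again supplied by Proposition \ref{prop:apparenthordecay}.

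The main technical obstacle I anticipate is in the first step: one must simultaneously verify that $\Psi$ has a well-defined vanishing limit at $u=-\infty$ and that $\int_{-\infty}^u\partial_u\Psi\,du'$ converges absolutely, which requires careful use of the red-shift estimates of Proposition \ref{lm:metricredshift} — especially \eqref{eq:redshiftOmega} and \eqref{eq:redshiftdur} — to propagate exponential $|u|$-decay into the bounds for $\partial_u\Psi$ throughout $\mathcal{R}_\delta$. A related bookkeeping issue is obtaining a non-circular uniform bound on $\Omega^2$ (equivalently on $|\partial_u r|$) in $\mathcal{N}_{\delta,r_0}$ before \eqref{eq:rescaledOmegaestprop} is established; I would close this by a short bootstrap using \eqref{propeq:r2} integrated from $\gamma_\delta$ in combination with \eqref{eq:durest1prop}.
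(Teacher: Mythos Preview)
Your approach matches the paper's: both integrate \eqref{propeq:Omegarescale2} in $u$ from the horizon (the paper changes variable to $r$ via $du'=dr'/\partial_u r$ rather than packaging the renormalization as $\Psi$, but the computation is identical), then integrate $\partial_v\log\bigl(r^2\Omega^2/(2Mr-r^2)\bigr)$ in $v$ from $\gamma_\delta$ using \eqref{eq:redshiftOmega2}, with \eqref{eq:durest2prop} and \eqref{eq:ratiodurdvr} following algebraically from \eqref{eq:durest1prop}, \eqref{eq:dvrestprop} and \eqref{eq:rescaledOmegaestprop}. Two small corrections: $\Psi|_{\mathcal{H}}$ is not exactly zero, since $r|_{\mathcal{H}}-2M$ and $\partial_v r|_{\mathcal{H}}$ are only $O((M^{-1}v)^{-2p+1})$ by Lemma~\ref{lm:rdata} --- this residual is of acceptable order and is exactly the boundary contribution the paper tracks; and your ``key geometric observation'' that $u+v$ is uniformly bounded in $\mathcal{N}_{\delta,r_0}$ is not established a priori and is not needed --- the $u$-integral becomes a bounded $r$-integral after the change of variable, while for the $v$-integral the paper simply uses that $(M^{-1}v')^{-2p+1}$ is integrable in $v'$ since $p>1$, yielding $(M^{-1}v_{\gamma_\delta}(u))^{-2p+2}\sim (M^{-1}|u|)^{-2p+2}$.
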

\begin{proof}
We integrate \eqref{propeq:Omegarescale2} to obtain
\begin{equation*}
\begin{split}
\partial_v\log (r^2\Omega^2)(u,v)=&\:\partial_v\log (r^2\Omega^2)(-\infty,v)+\int_{-\infty}^u[-2r^{-2}\partial_ur \partial_v r-2\partial_u\phi\partial_v\phi] (u',v)\,du'\\
=&\:\partial_v\log (r^2\Omega^2)(-\infty,v)+\int_{r_{\mathcal{H}^+}(v)}^{r(u,v)} [-2r^{-2} \partial_v r-2 Y \phi\partial_v\phi]|_{v'=v}(r')\,dr'\\
=&\: \partial_v\log (r^2)(-\infty,v)+ \partial_v\log(\widehat{\Omega}^2)(-\infty,v)-\int_{r_{\mathcal{H}^+}(v)}^{r(u,v)}[r'^{-2}-2Mr'^{-3} ]\,dr'\\
&\:+\int_{r_{\mathcal{H}^+}(v)}^{r(u,v)} [-2r^{-2} (\partial_v r-\frac{1}{2}+Mr^{-1})-2 Y \phi\partial_v\phi]|_{v'=v}(r')\,dr'.
\end{split}
\end{equation*}
where we used that $\partial_ur<0$ to reparametrize $v=const.$ lines with $r$ instead of $u$.

Note that we can estimate
\begin{equation*}
\left|\partial_v\log (r^2)(-\infty,v)+ \partial_v\log(\widehat{\Omega}^2)(-\infty,v)-\frac{1}{4M}\right|=2r^{-1}\partial_vr(-\infty,v) \leq CM^{-1} D_2^2 (M^{-1}v)^{-2p}.
\end{equation*}
Furthermore,
\begin{equation*}
-\int_{r_{\mathcal{H}^+}(v)}^{r(u,v)} [r'^{-2}-2Mr'^{-3}]\,dr'= r^{-1}-Mr^{-2}-r_{\mathcal{H}^+}^{-1}+Mr_{\mathcal{H}^+}^{-2},
\end{equation*}
so
\begin{equation*}
\left| \int_{r_{\mathcal{H}^+}(v)}^{r(u,v)} r'^{-2}-2Mr'^{-3}\,dr'+\left[r^{-1}-Mr^{-2}-\frac{1}{4M}\right] \right| \leq CM^{-1} D_2^2 (M^{-1}v)^{-2p+1}.
\end{equation*}
We also apply \eqref{ba:dvphi}, \eqref{ba:Yphi} and \eqref{eq:dvrestprop} to estimate:
\begin{equation*}
\int_{r_{\mathcal{H}^+}(v)}^{r(u,v)} [2r^{-2} (\partial_v r-\frac{1}{2}+Mr^{-1})-2 Y \phi\partial_v\phi]|_{v'=v}(r')\,dr'\leq C_{r_0}\cdot C(\Delta_1,D_2,D_3,v_0)(M^{-1}v)^{-2p+1}.
\end{equation*} 
We combine the above estimates to obtain:
\begin{equation*}
|\partial_v\log (r^2\Omega^2)(u,v)-r^{-1}+Mr^{-2}|(u,v)\leq C_{r_0}\cdot C(\Delta_1,D_2,D_3,v_0) (M^{-1}v)^{-2p+1},
\end{equation*}
which concludes the proof of \eqref{eq:prelimestOmega}.

Consider
\begin{equation*}
\begin{split}
\partial_v\log\left(\frac{r^2\Omega^2}{2Mr-r^2}\right)=&\:\partial_v\log (r^2\Omega^2)-\partial_v \log(2Mr-r^2)\\
=&\: \partial_v\log (r^2\Omega^2)-\frac{2M-2r}{2Mr-r^2}\partial_vr\\
=&\: \partial_v\log (r^2\Omega^2)-r^{-1}+Mr^{-2}- \frac{2M-2r}{2Mr-r^2}(\partial_vr-\frac{1}{2}+Mr^{-1}).
\end{split}
\end{equation*}
Note that
\begin{equation*}
(2M-r_S)^{-1}= (2M-r)^{-1}\frac{2M-r}{2M-r_S}= (2M-r)^{-1} (1+\frac{r_s-r}{2M-r_S}),
\end{equation*}
so we can estimate on $\gamma_{\delta}$:
\begin{equation*}
(2M-r)^{-1}(u,v)\leq \frac{1}{2M-r_{\delta}} \frac{1}{1+\frac{(r-r_s)(u,v)}{2M-r_\delta}}.
\end{equation*}
By \eqref{eq:redshiftr} together with $v-|u|\leq -\frac{1}{\delta}$ along $\delta$, we obtain
\begin{equation*}
(2M-r)^{-1}(u_{\delta}(v),v)\leq C_{\delta} M^{-1}[1+\epsilon \Delta_2 (M^{-1}v)^{-2p+1}]+ C(\Delta_1,D_2,D_3,v_0) M^{-1} (M^{-1}v)^{-2p+1}.
\end{equation*}
By $\partial_u r<0$ the above estimate holds in fact for all $(u,v)$ in $\mathcal{N}_{\delta,r_0}$.

We conclude by using \eqref{eq:prelimestOmega} and \eqref{eq:dvrestprop} that in $\mathcal{N}_{\delta,r_0}$:
\begin{equation}
\label{eq:dvrescaledOmegaest}
\left|\partial_v\log\left(\frac{\Omega^2}{2Mr^{-1}-1}\right)\right|\leq  C_{\delta,r_0}(\epsilon\Delta_2+C(\Delta_1,D_2,D_3,v_0)) M^{-1} (M^{-1}v)^{-2p+1}.
\end{equation}
We integrate in $v$ and  and apply \eqref{eq:redshiftOmega2} and \eqref{eq:dvrescaledOmegaest} to obtain
\begin{equation}
\label{eq:rescaledOmegaest}
\begin{split}
\left| \log \frac{r^2\Omega^2}{2Mr-r^2}\right|\leq&\:  \left|\log \left(\frac{r^2\Omega^2}{2Mr-r^2}\right)\right|(u,v_{\gamma_{\delta}}(u))\\
&+ C_{\delta,r_0}(\epsilon\Delta_2+C(\Delta_1,D_2,D_3,v_0)) M^{-1} (M^{-1}v_{\gamma_{\delta}}(u))^{-2p+2}\\
\leq&\: C_{\delta,r_0}(C(\Delta_1,D_2,D_3,v_0)+ \epsilon \Delta_2 ) (M^{-1}|u|)^{-2p+2}.
\end{split}
\end{equation}
This concludes the proof of \eqref{eq:rescaledOmegaestprop}.

By combining the above estimate with \eqref{eq:durest1prop}, we moreover obtain \eqref{eq:durest2prop}. We finally combine \eqref{eq:durest2prop} with \eqref{eq:dvrestprop} to obtain \eqref{eq:ratiodurdvr}.
\end{proof}

\begin{proposition}
\label{prop:closingbaOmega}
Let $\delta>0$ be suitably small. Then we can choose $\Delta_2>0$ suitably large so that
\begin{equation*}
\left| \partial_u \log \left(\frac{r^2\Omega^2}{r_S^2\Omega^2_S}\right)\right|\leq \frac{1}{2}\Delta_2 (M^{-1}v)^{-2p+1}\quad \textnormal{for all $(u,v)\in \mathcal{R}_{\delta}$},
\end{equation*}
thereby improving the bootstrap assumption \eqref{ba:Omega}.
\end{proposition}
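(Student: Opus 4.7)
The plan is to derive a propagation equation for the rescaled difference quantity $\log(r^2\Omega^2/(r_S^2\Omega_S^2))$ by subtracting \eqref{propeq:Omegarescale2} from its Schwarzschild analog (using $\phi_S\equiv 0$):
\[
\partial_u\partial_v\log\!\left(\frac{r^2\Omega^2}{r_S^2\Omega_S^2}\right) = -2\left[r^{-2}\partial_u r\,\partial_v r - r_S^{-2}\partial_u r_S\,\partial_v r_S\right] - 2\,\partial_u\phi\,\partial_v\phi,
\]
and then integrate once in $v$ from $v_0$ to control $\partial_u\log(r^2\Omega^2/(r_S^2\Omega_S^2))$ in terms of data on $\underline{H}_0$ plus a $v$-integral of the right-hand side.

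For the initial-data contribution at $v=v_0$, the gauge condition \eqref{gaugeeq:Omega2}, together with \eqref{eq:reluU}--\eqref{eq:relhatomega}, gives $\Omega^2(u,v_0)=\Omega_S^2(u,v_0)$, so $\partial_u\log(\Omega^2/\Omega_S^2)(u,v_0)=0$ and the initial term reduces to $2\partial_u\log(r/r_S)(u,v_0)$. Using \eqref{conseq:r1}, the bootstrap \eqref{ba:Yphi}, the identity $\Omega_S^{-2}\partial_u r_S=-\tfrac12$, and \eqref{eq:estdatar} to bound $r-r_S$ on $\underline{H}_0$, this initial contribution is at most $C e^{-|u|/(4M)}(M^{-1}v_0)^{-2p+1}$. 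Since $\mathcal{R}_\delta$ enforces $|u|\geq v+\delta^{-1}$, this term is exponentially small in $v$ and is absorbed into $\tfrac12\Delta_2(M^{-1}v)^{-2p+1}$.

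For the $v$-integral of the scalar-field term, I would use $\partial_u\phi=-\partial_u r\cdot Y\phi$, the bootstrap bounds \eqref{ba:dvphi}--\eqref{ba:Yphi}, and Proposition \ref{prop:metricest} to obtain $|\partial_u r|\leq Ce^{(v-|u|)/(4M)}$ in $\mathcal{R}_\delta$; then Lemma \ref{lm:expint} delivers a contribution of order $\Delta_1^2(M^{-1}v)^{-2p+1}$. For the geometric difference, I would split
\[
r^{-2}\partial_u r\,\partial_v r - r_S^{-2}\partial_u r_S\,\partial_v r_S = r^{-2}\partial_u r(\partial_v r-\partial_v r_S)+r^{-2}(\partial_u r-\partial_u r_S)\partial_v r_S+(r^{-2}-r_S^{-2})\partial_u r_S\,\partial_v r_S,
\]
and estimate each piece using \eqref{eq:redshiftdur}, \eqref{eq:redshiftr}, \eqref{eq:redshiftdvr} from Proposition \ref{lm:metricredshift} together with the Schwarzschild bound \eqref{eq:durschwest}; the $\Delta_2$-dependent parts carry a small factor $\epsilon$ (coming from the smallness of $\delta$), and after $v$-integration via Lemma \ref{lm:expint} they produce a total contribution of the form $(C\epsilon\Delta_2+C(\Delta_1,D_2,D_3,v_0))(M^{-1}v)^{-2p+1}$.

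Collecting all contributions yields
\[
\left|\partial_u\log\!\left(\frac{r^2\Omega^2}{r_S^2\Omega_S^2}\right)\right|(u,v)\leq\bigl(C\epsilon\Delta_2+C(\Delta_1,D_2,D_3,v_0)\bigr)(M^{-1}v)^{-2p+1};
\]
choosing first $\delta$ so small that $C\epsilon\leq\tfrac14$, and then $\Delta_2$ so large that $C(\Delta_1,D_2,D_3,v_0)\leq\tfrac14\Delta_2$, closes the estimate with constant $\tfrac12\Delta_2$. The main obstacle is that the use of the rescaled equation \eqref{propeq:Omegarescale2} is essential: the borderline term $\tfrac12 r^{-2}\Omega^2$ appearing in \eqref{propeq:Omega} would prevent closure; and every $v$-integration carried out via Lemma \ref{lm:expint} must be done carefully enough that the decay rate $(M^{-1}v)^{-2p+1}$ is preserved with no loss of a factor of $v$.
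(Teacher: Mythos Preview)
Your approach is correct and essentially the same as the paper's: both subtract the Schwarzschild version of \eqref{propeq:Omegarescale2}, decompose the geometric difference $r^{-2}\partial_ur\partial_vr-r_S^{-2}\partial_ur_S\partial_vr_S$ into pieces controlled by Proposition~\ref{lm:metricredshift} and \eqref{eq:durschwest}, estimate the scalar-field term via the bootstrap assumptions, integrate in $v$ with Lemma~\ref{lm:expint}, and close by taking $\epsilon$ small and $\Delta_2$ large. You are more explicit than the paper about the boundary contribution at $v=v_0$; note, however, that \eqref{eq:estdatar} concerns $H_0$ rather than $\underline{H}_0$---the exponential smallness of $\partial_u\log(r^2/r_S^2)(u,v_0)$ follows more directly from boundedness of $\partial_U\log(r/r_S)$ on the compact segment $\underline{H}_0$ together with $\partial_u=e^{-|u|/(4M)}\partial_U$.
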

\begin{proof}
We will apply \eqref{propeq:Omegarescale2}. We decompose:
\begin{align*}
-2r^{-2}\partial_ur\partial_vr=&-2r^{-2}[\partial_ur_S\partial_vr_S+\partial_u(r-r_S)\partial_vr_S+\partial_vr_S\partial_u(r-r_S)+\partial_u(r-r_S)\partial_v(r-r_S)],\\
r^{-1}=&\:r_S^{-1}\left(1-\frac{r_S-r}{r_S}\right)^{-1},\\
\partial_u\phi\partial_v\phi=&-\partial_ur Y\phi\partial_v\phi=-\partial_u r_S Y\phi\partial_v\phi- \partial_u(r-r_S) Y\phi\partial_v\phi.
\end{align*}
Note first that we can take $r_S>M$ in $\mathcal{R}_{\delta}$ for suitably small $\delta>0$. By the above expression for $r^{-1}$, together with \eqref{eq:redshiftr}, we can moreover conclude that for $v\geq v_1$, with $v_1$ suitably large depending on $D_i$, with $i=1,2,3$ and $\Delta_1$, we moreover have that $r>\frac{1}{2}M$ in $\mathcal{R}_{\delta}\cap \{v\geq v_1\}$. We can moreover take $|u_0|$ suitably large depending on $v_1$ to conclude that in $\mathcal{R}_{\delta}\leq \{v\leq v_1\}$:
\begin{equation*}
r\geq \frac{1}{2} \inf_{\underline{H}_0} r|_{\underline{H}_0}.
\end{equation*}
By \eqref{propeq:Omegarescale2}, together with the above observations, we can therefore estimate:

\begin{equation*}
\begin{split}
\left|\partial_v\partial_u\log\left(\frac{r^2\Omega^2}{r_S^2\Omega^2_S}\right)\right| \leq&\: C M^{-2}\Big[ |\partial_vr_s|( |\partial_u(r-r_S)|+|\partial_v(r-r_S)|+M^2|Y\phi||\partial_v\phi|)\\
&+|\partial_u(r-r_S)||\partial_v(r-r_S)|+M^2|\partial_u(r-r_S)||Y\phi||\partial_v\phi|\Big].
\end{split}
\end{equation*}

We use \eqref{eq:durschwest} together with the estimates in Proposition \ref{lm:metricredshift} and the bootstrap assumptions \eqref{ba:dvphi} and \eqref{ba:Yphi} to conclude that for $\epsilon\Delta_2<<1$:
\begin{equation*}
\left|\partial_v\partial_u\log\left(\frac{r^2\Omega^2}{r_S^2\Omega^2_S}\right)\right|\leq  C M^{-2} e^{\frac{1}{4M}(v-|u|)} [\epsilon^2 \Delta_2 (M^{-1}v)^{-2p+1}+C(\Delta_1,D_2,D_3,v_0)(M^{-1}v)^{-2p+1}],
\end{equation*}
We conclude the proof by integrating the above inequality in $v$, applying \eqref{eq:expint2} and choosing $\Delta_2$ suitably large compared to $C(\Delta_1,D_2,D_3,v_0)$.
\end{proof}

\subsection{Upper bound estimates for $\phi$}
\label{sec:upboundphi}
In this section, we derive upper bounds for $\phi$. We moreover improve the bootstrap assumptions \eqref{ba:dvphi} and \eqref{ba:Yphi}. 

In the following proposition we improve the bootstrap assumptions \eqref{ba:dvphi} and \eqref{ba:Yphi} the region $\mathcal{R}_{\delta}$:
\begin{proposition}
\label{prop:baimpphired}
In $\mathcal{R}_{\delta}$ with $\delta>0$ suitably small and $\Delta_1$ suitably large, we can estimate
\begin{align}
\label{eq:dvphiredshift0}
r|\partial_v\phi|(u,v)\leq &\: \frac{1}{4}{\Delta_1} (M^{-1}v)^{-p},\\
\label{eq:Yphiredshift0}
|Y\phi|(u,v)\leq &\: \frac{1}{4}M^{-1}{\Delta_1}  (M^{-1}v)^{-p}.
\end{align}
\end{proposition}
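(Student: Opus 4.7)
The plan is to run a red-shift argument, propagating an estimate on $r\partial_v\phi$ along constant-$v$ slices from the event horizon $\mathcal{H}=\{U=0\}$ where the initial data \eqref{eq:estdataphi} apply, and then propagating an estimate on $r^2 Y\phi$ along constant-$u$ slices from $\underline{H}_0$, exploiting that the coefficient in the resulting $v$-direction ODE has the red-shift sign in $\mathcal{R}_\delta$.

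For the first step, I would rewrite \eqref{propeq:phi1} as $\partial_u(r\partial_v\phi)=\partial_v r\,\partial_u r\,Y\phi$ and integrate from $u'=-\infty$ along a constant-$v$ slice. The boundary term is controlled by $|r\partial_v\phi|(-\infty,v)\leq D_2(M^{-1}v)^{-p}$ from \eqref{eq:estdataphi}. The source is estimated using \eqref{ba:Yphi}, together with $|\partial_u r|\lesssim 1$ from Proposition \ref{prop:metricest} and $|\partial_v r|\lesssim e^{(v-|u'|)/(4M)}$ coming from \eqref{eq:durschwest} and \eqref{eq:redshiftdvr}. Since $\int_{-\infty}^{u}e^{u'/(4M)}\,du'=4Me^{-|u|/(4M)}$, the $u'$-integration produces the smallness factor $e^{(v-|u|)/(4M)}\leq e^{-1/(4M\delta)}$, giving
\[
|r\partial_v\phi|(u,v)\leq \bigl(D_2 + C e^{-1/(4M\delta)}\Delta_1\bigr)(M^{-1}v)^{-p}.
\]
Choosing $\delta$ small so that $Ce^{-1/(4M\delta)}\leq \tfrac{1}{16}$, and then $\Delta_1\geq 16 D_2$, yields \eqref{eq:dvphiredshift0} (indeed with a spare factor of $2$).

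For the second step, I would treat \eqref{propeq:Yphi} as a linear ODE in $v$ at fixed $u$ for $r^2 Y\phi$, with coefficient
\[
\mathcal{K}:=-\tfrac{1}{4}r^{-1}\frac{\Omega^2}{-\partial_u r}+2r^{-1}\partial_v r.
\]
Propositions \ref{prop:metricest} and \ref{lm:metricredshift} give $-\Omega^{-2}\partial_u r\to \tfrac{1}{2}$, $r\to 2M$, and $|\partial_v r|\to 0$ throughout $\mathcal{R}_\delta$, hence $\mathcal{K}\leq -\tilde\kappa$ with $\tilde\kappa=\tfrac{1}{4M}(1-\epsilon)$ for any prescribed $\epsilon>0$, provided $\delta$ is small enough. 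Since $(u,v)\in\mathcal{R}_\delta$ implies $(u,v')\in\mathcal{R}_\delta$ for all $v_0\leq v'\leq v$, the integrating factor argument gives
\[
|r^2 Y\phi|(u,v)\leq e^{-\tilde\kappa(v-v_0)}|r^2 Y\phi|(u,v_0) + \int_{v_0}^v e^{-\tilde\kappa(v-v')}|r\partial_v\phi|(u,v')\,dv'.
\]
Feeding in $|r^2 Y\phi|(u,v_0)\leq 2M D_3$ from \eqref{eq:estdataphi2} and the improved Step~1 bound $|r\partial_v\phi|\leq \tfrac{1}{8}\Delta_1(M^{-1}v')^{-p}$, and evaluating the integral via Lemma \ref{lm:expint}, I obtain
\[
|r^2 Y\phi|(u,v)\leq 2M D_3\, e^{-\tilde\kappa(v-v_0)} + \tfrac{M}{2}(1+\epsilon')\Delta_1(M^{-1}v)^{-p}.
\]
Finally, dividing by $r^2$, which is bounded below by, say, $3M^2$ in $\mathcal{R}_\delta$ thanks to Proposition \ref{lm:metricredshift} and the Schwarzschild limit $r_S\to 2M$, and choosing $\Delta_1$ large enough (depending on $D_3,v_0,p$) to absorb the exponentially decaying first term into $\tfrac{1}{8}M^{-1}\Delta_1(M^{-1}v)^{-p}$, yields \eqref{eq:Yphiredshift0}.

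The principal obstacle is the quantitative bookkeeping of constants in Step~2: the $\tfrac{1}{4}$ prefactor in \eqref{eq:Yphiredshift0} is tight, so after dividing by $r^2$ we must use the precise closeness $r\approx 2M$ in $\mathcal{R}_\delta$ from Proposition \ref{lm:metricredshift} rather than the cruder lower bound $r\geq M/2$. A secondary subtlety is ensuring that the error $\epsilon$ in $\mathcal{K}\leq -\tilde\kappa$, the error $\epsilon'$ in $r^2\approx 4M^2$, and $Ce^{-1/(4M\delta)}$ are simultaneously small; this is arranged by first fixing $\delta$ small, then taking $\Delta_1$ (and, if necessary, $|u_0|$ and $v_0$) sufficiently large.
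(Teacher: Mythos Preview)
Your two-step red-shift argument matches the paper's proof in structure and is essentially correct, but one allocation in Step~1 needs swapping. The exponential factor $e^{(v-|u'|)/(4M)}$ that makes $\int_{-\infty}^u(\cdots)\,du'$ converge must come from $|\partial_u r|$, not from $|\partial_v r|$: combining \eqref{eq:durschwest} with \eqref{eq:redshiftdur} gives $|\partial_u r|\lesssim e^{(v-|u'|)/(4M)}$ (both terms in \eqref{eq:redshiftdur} carry the exponential), whereas \eqref{eq:redshiftdvr} contains a polynomial-only tail $C(\Delta_1,D_2,D_3,v_0)(M^{-1}v)^{-2p+1}$ with no exponential in $u'$, so your claimed bound $|\partial_v r|\lesssim e^{(v-|u'|)/(4M)}$ is false as stated. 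With your ``$|\partial_u r|\lesssim 1$'' and that missing tail, the $u'$-integral would diverge. Once you swap to $|\partial_u r|\lesssim e^{(v-|u'|)/(4M)}$ and $|\partial_v r|\lesssim 1$, the rest of Step~1 and all of Step~2 go through exactly as you wrote; this is also how the paper proceeds.
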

\begin{proof}
By integrating \eqref{propeq:phi1} in $u$ and applying \eqref{eq:durschwest}, \eqref{ba:Yphi},  \eqref{eq:dvrestprop} and \eqref{lm:metricredshift}, we moreover have that
\begin{equation}
\label{eq:dvphiredshiftest}
\begin{split}
|r\partial_v\phi|(u,v)\leq&\: |r\partial_v\phi|(-\infty,v)+\int_{-\infty}^u |\partial_vr||\partial_ur||Y\phi|(u',v)\,du'\\
\leq &\: C D_2(M^{-1}v)^{-p}+C(\Delta_1,D_2,D_3,v_0)(M^{-1}v)^{-p}e^{\frac{1}{4M}(v-|u|)}\\
\leq &\: C D_2(M^{-1}v)^{-p}+\epsilon C(\Delta_1,D_2,D_3,v_0)(M^{-1}v)^{-p}.
\end{split}
\end{equation}
Hence, for $\epsilon>0$ suitably small compared to $C(\Delta_1,D_2,D_3,v_0)$ and $\Delta_1$ suitably large compared to $D_2$, we obtain \eqref{eq:dvphiredshift0}.

We use \eqref{propeq:Yphi} and apply the estimates in Lemma \ref{lm:metricredshift} to obtain
\begin{equation*}
\partial_v(|r^2Y\phi|)\leq -\frac{1}{4M}(1-\epsilon) |r^2Y\phi|+r|\partial_v\phi|,
\end{equation*}
in $\mathcal{R}_{\delta}$, for $\epsilon>0$ arbitrarily small, with $\delta$ chosen appropriately small. We rearrange the above inequality to obtain:
\begin{equation*}
\partial_v(e^{\frac{1}{4M}(1-\epsilon)v}|r^2Y\phi|)\leq e^{\frac{1}{4M}(1-\epsilon)v}r|\partial_v\phi|.
\end{equation*}
By integrating the above equation in $v$ and applying \eqref{eq:dvphiredshiftest} and Lemma \ref{lm:expint} , we obtain
\begin{equation}
\label{eq:Yphiredshiftest}
\begin{split}
|r^2Y\phi|(u,v)\leq&\: e^{-\frac{1}{4M}(1-\epsilon)v} |r^2Y\phi|(u,v_0)+ e^{-\frac{1}{4M}(1-\epsilon)v} \int_{v_0}^v e^{\frac{1}{4M}(1-\epsilon)v'} r|\partial_v\phi|(u,v')\,dv'\\
\leq &\: e^{-\frac{1}{4M}(1-\epsilon)v} D_{3}+e^{-\frac{1}{4M}(1-\epsilon)v} \int_{v_0}^v e^{\frac{1}{4M}(1-\epsilon)v'} r|\partial_v\phi|(u,v')\,dv\\
\leq &\: CMe^{-\frac{1}{4M}(1-\epsilon)v} D_3+C MD_2(M^{-1}v)^{-p}+\epsilon C(\Delta_1,D_2,D_3,v_0)M(M^{-1}v)^{-p}.
\end{split}
\end{equation}
For $\epsilon>0$ suitably small depending on $r_0$ and $\Delta_1$ suitably large, depending on $D_2$ and $D_3$, we therefore obtain \eqref{eq:Yphiredshift0}.
\end{proof}

\begin{lemma}
\label{lm:relatuvconstr}
On every curve $\gamma_{r_1}=\{r=r_1\}$ in $\mathcal{N}_{\delta,r_0}$, we can estimate
\begin{equation}
\label{eq:reluvconstr}
\left|v_{\gamma_{r_1}}(u)-|u||\right|\leq   C_{\delta,r_0}(C(\Delta_1,D_2,D_3,v_0)+ \epsilon \Delta_2) (M^{-1}v)^{-2p+2}
\end{equation}
Furthermore, for all $(u,v)\in \mathcal{N}_{\delta,r_0}$ we have that
\begin{equation}
\label{eq:reluvN}
\left|1-\frac{v}{|u|}\right|\leq  C_{\delta,r_0}(C(\Delta_1,D_2,D_3,v_0)+ \epsilon \Delta_2) (M^{-1}v)^{-2p+1}.
\end{equation}
\end{lemma}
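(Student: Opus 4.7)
The plan is to exploit the fact that a level set $\gamma_{r_1} = \{r = r_1\}$ in $\mathcal{N}_{\delta, r_0}$ becomes asymptotically null (parallel to the ingoing direction) as $v \to \infty$, because $\partial_u r/\partial_v r \to 1$ by \eqref{eq:ratiodurdvr}. First I would parametrize $\gamma_{r_1}$ by $u$, writing $v = v_{\gamma_{r_1}}(u)$; implicit differentiation of $r(u, v_{\gamma_{r_1}}(u)) = r_1$ then gives
\[
\frac{dv_{\gamma_{r_1}}}{du}(u) = -\frac{\partial_u r}{\partial_v r}(u, v_{\gamma_{r_1}}(u)).
\]
Since $\gamma_{r_1} \subset J^+(\mathcal{A})$ by Proposition \ref{prop:apparenthordecay} (as $r_1 \leq r_\delta < 2M$), both $\partial_u r$ and $\partial_v r$ are strictly negative, so this ratio is positive.

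Next I would apply \eqref{eq:ratiodurdvr} to obtain
\[
\left|\frac{d(v_{\gamma_{r_1}} + u)}{du}\right| = \left|1 - \frac{\partial_u r}{\partial_v r}\right| \leq C_{\delta, r_0}\bigl(C(\Delta_1, D_2, D_3, v_0) + \epsilon \Delta_2\bigr)(M^{-1} v_{\gamma_{r_1}}(u))^{-2p+2}.
\]
Integrating this ODE along $\gamma_{r_1}$ from a past reference point $(u_*, v_*) \in \gamma_{r_1} \cap \partial \mathcal{N}_{\delta, r_0}$ (lying on either $\underline{H}_0$ or $\gamma_\delta$), together with a short bootstrap to maintain $v_{\gamma_{r_1}}(u) \sim |u|$ throughout the integration, yields control on $v_{\gamma_{r_1}}(u) + u - (v_* + u_*)$ of the claimed form. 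The boundary value $|v_* + u_*|$ is a constant depending on $\delta$ and $r_0$ which gets absorbed into $C_{\delta, r_0}$ upon restricting to the asymptotic regime $v \geq v_1$ with $v_1$ suitably large. For \eqref{eq:reluvN}, I would then divide \eqref{eq:reluvconstr} through by $|u|$ and use $|u| \gtrsim v$ (a direct consequence of \eqref{eq:reluvconstr} for $v$ large enough) to convert the $(M^{-1}v)^{-2p+2}$ bound into a $(M^{-1}v)^{-2p+1}$ bound.

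The principal technical point is the careful bookkeeping of the boundary contribution $v_* + u_*$ and ensuring the bootstrap $v \sim |u|$ closes consistently with the integration. Concretely, one needs that the integral of $(M^{-1}|u'|)^{-2p+2}$ from $|u_*|$ to $|u|$, obtained after the change of variables $v \sim |u|$ in the integrand, is of order $(M^{-1}v)^{-2p+2}$; this follows for $p>1$ provided one allows $C_{\delta,r_0}$ to depend on the finite boundary value and $v_1$ to be chosen large enough. All other ingredients are already in place from Proposition \ref{prop:durdvrr0}.
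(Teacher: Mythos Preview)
Your approach is essentially identical to the paper's: differentiate $r(u,v_{\gamma_{r_1}}(u))=r_1$ to obtain $\frac{dv_{\gamma_{r_1}}}{du}=-\frac{\partial_u r}{\partial_v r}$, invoke \eqref{eq:ratiodurdvr}, and integrate in $u$; the paper's proof is in fact terser than yours and does not discuss the boundary contribution or any bootstrap. For \eqref{eq:reluvN} the paper uses the sandwich $v_{\gamma_{r_1}}(u)-|u|\le v-|u|\le v_{\gamma_{r_0}}(u)-|u|$ (every $(u,v)\in\mathcal{N}_{\delta,r_0}$ lies on some level set $\gamma_{r_1}$), which amounts to your step of dividing \eqref{eq:reluvconstr} through by $|u|$.
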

\begin{proof}
We have that $r(u,v_{\gamma_{r_1}})=r_1$ by definition, so
\begin{equation*}
0=\partial_ur(u,v_{\gamma_{r_1}})+\frac{dv_{\gamma_{r_1}}}{du} \partial_vr(u,v_{\gamma_{r_1}}),
\end{equation*}
which implies that 
\begin{equation*}
\left|\frac{dv_{\gamma_{r_1}}}{du} +1\right|\leq \left|1-\frac{\partial_ur}{\partial_v r}\right|(u,v_{\gamma_{r_1}})\leq  C_{\delta,r_0}(C(\Delta_1,D_2,D_3,v_0)+ \epsilon \Delta_2) (M^{-1}v)^{-2p+1},
\end{equation*}
where we applied \eqref{eq:ratiodurdvr}.

We obtain \eqref{eq:reluvconstr} by integrating the above equation in $u$.

To obtain \eqref{eq:reluvN}, we simply observe that for all $(u,v)\in \mathcal{N}_{\delta,r_0}$, there exists an $r_1\geq r_0$ such that $v_{\gamma_{r_1}}\leq v \leq v_{\gamma_{r_0}}$, so that
\begin{equation*}
v_{\gamma_{r_1}}(u)-|u|\leq v-|u|\leq v_{\gamma_{r_0}}(u)-|u|,
\end{equation*}
and we apply \eqref{eq:reluvconstr}.
\end{proof}

Now, we improve the bootstrap assumptions \eqref{ba:dvphi} and \eqref{ba:Yphi} the region $\mathcal{N}_{\delta,r_0}$:
\begin{proposition}
\label{prop:baimpphinoshift}
In $\mathcal{N}_{\delta,r_0}$ with $\delta>0$ suitably small and $|u_0|$ suitably large depending on $\delta$, $\Delta_1$, $\Delta_2$, $D_2$, $D_3$, $v_0$ and $r_0$, we can estimate
\begin{align}
\label{eq:dvphinoshift}
r^2|\partial_v\phi|(u,v)\leq &\: M  {\Delta_1}  (M^{-1}v)^{-p},\\
\label{eq:duphinoshift}
r^2|\partial_u\phi|(u,v)\leq &\: M  {\Delta_1}  (M^{-1}v)^{-p},\\
\label{eq:Yphirnoshift}
r|Y\phi|(u,v)\leq &\: M {\Delta_1}  (M^{-1}v)^{-p}.
\end{align}
\end{proposition}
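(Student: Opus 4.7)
My plan is to improve the three bootstrap bounds in sequence, propagating the sharp red-shift estimates \eqref{eq:dvphiredshift0}--\eqref{eq:Yphiredshift0} across $\gamma_\delta$ into $\mathcal{N}_{\delta,r_0}$ using the metric expansions of Proposition \ref{prop:durdvrr0}; Lemma \ref{lm:relatuvconstr} is used freely to convert between weights in $v$ and in $|u|$. The natural order is first $Y\phi$ (via the renormalized propagation equation \eqref{propeq:Yphi}), then $r^2\partial_v\phi$ (via \eqref{propeq:phi1}), and finally $r^2\partial_u\phi$ (via the identity $\partial_u\phi = -\partial_u r\cdot Y\phi$).

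To treat $Y\phi$, I integrate \eqref{propeq:Yphi} along constant-$u$ lines in $v$ starting from $\gamma_\delta$, where \eqref{eq:Yphiredshift0} provides the boundary data. The coefficient $-\tfrac{1}{4r}\tfrac{\Omega^2}{-\partial_u r} + \tfrac{2\partial_v r}{r}$ can be expanded, using Proposition \ref{prop:durdvrr0}, as $\tfrac{1}{2r} - \tfrac{2M}{r^2}$ plus an error that decays in $v$ like $(v/M)^{-2p+2}$. The key observation is that $(2M-r)Y\phi$ is the correct renormalized variable: using also $\partial_v r \approx -(Mr^{-1}-\tfrac{1}{2})$, one verifies the cancellation $-\partial_v r - \tfrac{2M-r}{2r} \approx 0$, so that
\[
\partial_v\!\left[(2M-r)Y\phi\right] \approx \frac{2M-r}{r}\partial_v\phi + (\textnormal{errors}).
\]
Integrating in $v$, bounding the boundary term via \eqref{eq:Yphiredshift0} and the source via \eqref{ba:dvphi}, then changing variables to $r$ along constant $u$ via $dv = dr/\partial_v r$, a Gr\"onwall argument closes the improved estimate \eqref{eq:Yphirnoshift}.

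For $r^2\partial_v\phi$, I integrate \eqref{propeq:phi1} along constant-$v$ lines from $u_{\gamma_\delta}(v)$ up to $(u,v)$, substituting $du = dr/\partial_u r$ and using $\partial_u\phi = -\partial_u r\cdot Y\phi$ to obtain
\[
r\partial_v\phi(u,v) = r\partial_v\phi(u_{\gamma_\delta}(v), v) + \int_{r(u,v)}^{r_{\gamma_\delta}(v)} \partial_v r \cdot Y\phi\,dr'.
\]
The boundary term is bounded by \eqref{eq:dvphiredshift0}; the integrand uses the just-proved \eqref{eq:Yphirnoshift} and the estimate $|\partial_v r|\leq M r'^{-1}(1+o(1))$ from \eqref{eq:dvrestprop}. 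Multiplying through by $r$ and using that $r\leq C M$ in $\mathcal{N}_{\delta,r_0}$ yields \eqref{eq:dvphinoshift}. Finally \eqref{eq:duphinoshift} follows immediately from $\partial_u\phi = -\partial_u r\cdot Y\phi$ together with \eqref{eq:Yphirnoshift} and $|\partial_u r|\leq M r^{-1}(1+o(1))$ from \eqref{eq:durest2prop}.

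The principal technical obstacle is the precise tracking of constants required to improve the bootstrap factors of two in \eqref{ba:dvphi}--\eqref{ba:Yphi}: a naive insertion of the bootstrap bounds into the source terms reproduces the bootstrap constants. Successfully closing the loop requires (a) using the sharp factor $\tfrac{1}{4}\Delta_1$ in the boundary contributions from $\gamma_\delta$ (sharp because $\Delta_1$ is taken large compared to $D_2, D_3$); (b) identifying the renormalized variable $(2M-r)Y\phi$, whose propagation equation has only $O(1)$ coefficients to leading Schwarzschild order, so as to avoid spurious $r^{-2}$ blow-up in the integrating factor; and (c) taking $|u_0|$ large enough that the errors from Proposition \ref{prop:durdvrr0}, which decay like $(v/M)^{-2p+2}$, are of strictly lower order than the main weight $(v/M)^{-p}$.
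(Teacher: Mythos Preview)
Your renormalization $(2M-r)Y\phi$ is correct --- it is exactly the integrating factor for \eqref{propeq:Yphi} to leading Schwarzschild order --- and your accounting of the error structure is accurate. The gap is in the sequential strategy for closing the constants.

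Concretely: integrating $\partial_v[(2M-r)Y\phi]\approx \tfrac{2M-r}{r}\partial_v\phi$ in $v$ and converting to $r$ via $dv\approx -\tfrac{2r}{2M-r}\,dr$ gives
\[
(2M-r)|Y\phi|(u,v)\ \leq\ (2M-r_{\gamma_\delta})|Y\phi|\big|_{\gamma_\delta}
+2\int_{r(u,v)}^{r_{\gamma_\delta}}|\partial_v\phi|\,dr'+\text{errors}.
\]
If you now bound the source with \eqref{ba:dvphi}, the integral is at most $4\Delta_1 M\int_r^{r_{\gamma_\delta}}r'^{-2}\,dr'\cdot(M^{-1}v)^{-p}(1+o(1))\leq \tfrac{4\Delta_1 M}{r}(M^{-1}v)^{-p}(1+o(1))$; dividing by $2M-r\approx 2M$ and multiplying by $r$ returns $r|Y\phi|\leq 2\Delta_1(M^{-1}v)^{-p}(1+o(1))$. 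The bootstrap constant is merely reproduced, not improved, and there is nothing for Gr\"onwall to act on because the source in this equation is $\partial_v\phi$, not $Y\phi$. Reversing the order (doing $\partial_v\phi$ first via \eqref{propeq:phi1} with \eqref{ba:Yphi} in the source) runs into the identical obstruction.

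The paper circumvents this by treating $r\partial_v\phi$ and $r\partial_u\phi$ \emph{simultaneously}. Setting
\[
\overline{\Phi}(s):=\max\Big\{\sup_{r=s}(M^{-1}v)^{p}|r\partial_v\phi|,\ \sup_{r=s}(M^{-1}|u|)^{p}|r\partial_u\phi|\Big\}
\]
over level sets $\{r=s\}\cap\mathcal{N}_{\delta,r_0}$ and integrating \eqref{propeq:phi1} in $u$, \eqref{propeq:phi2} in $v$ --- each having the \emph{other} $\phi$-derivative as source --- one gets, after changing variables to $r$ and using $|\partial_vr/\partial_ur|=1+O((M^{-1}v)^{-2p+2})$ from \eqref{eq:ratiodurdvr} and \eqref{eq:reluvN}, the self-referential inequality
\[
\overline{\Phi}(r)\ \leq\ \overline{\Phi}(r_\delta{+}\eta)+\int_r^{r_\delta+\eta}s^{-1}\overline{\Phi}(s)\,ds+\text{lower order}.
\]
Gr\"onwall in $r$ then yields $\overline{\Phi}(r)\leq \overline{\Phi}(r_\delta{+}\eta)\cdot\tfrac{r_\delta+\eta}{r}$, i.e.\ $r\overline{\Phi}(r)\leq (r_\delta{+}\eta)\overline{\Phi}(r_\delta{+}\eta)$, and the right-hand side is controlled by the \emph{sharp} red-shift data $\tfrac14\Delta_1$ from Proposition~\ref{prop:baimpphired} rather than the bootstrap factor $2\Delta_1$. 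This is what actually closes \eqref{eq:dvphinoshift}--\eqref{eq:duphinoshift}; the $Y\phi$ bound \eqref{eq:Yphirnoshift} is then obtained \emph{a posteriori} from $\partial_u\phi=-\partial_u r\cdot Y\phi$ together with \eqref{eq:durest2prop}.
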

\begin{proof}
For $\eta>0$ sufficiently small and $|u_0|$ appropriately large, the curve $\{r=r_{\delta}+\eta\}\cap \{v\geq v_{\gamma_{\delta}}(u_0)\}$ is contained in $\mathcal{R}_{\delta}$. Furthermore, we can arrange $v_{\gamma_{\delta}}(u_0)$ to be arbitrarily large, by taking $|u_0|$ appropriately large. In particular, we may use that $v_{\gamma_{\delta}}(u_0)>v_0$ and $\{r=r_{\delta}+\eta\}\cap \{v\geq v_{\gamma_{\delta}}(u_0)\}$ is spacelike (since it is contained in $J^+(\mathcal{A})$).

Denote:
\begin{equation*}
\overline{\Phi}(s):=\max \Big \{\sup_{\{r(u,v)= s\}\cap \{v\geq v_{\gamma_{\delta}}(u_0)\}} (M^{-1}|v|)^{p}||r\partial_v\phi| , \sup_{\{r(u,v)= s\}\cap\{v\geq v_{\delta}(u_0)\}}  (M^{-1}|u|)^{p}|r\partial_u\phi|\Big \},
\end{equation*}
with $r_0\leq s\leq r_{\delta}+\eta$.
 \begin{figure}[H]
	\begin{center}
\includegraphics[scale=0.5]{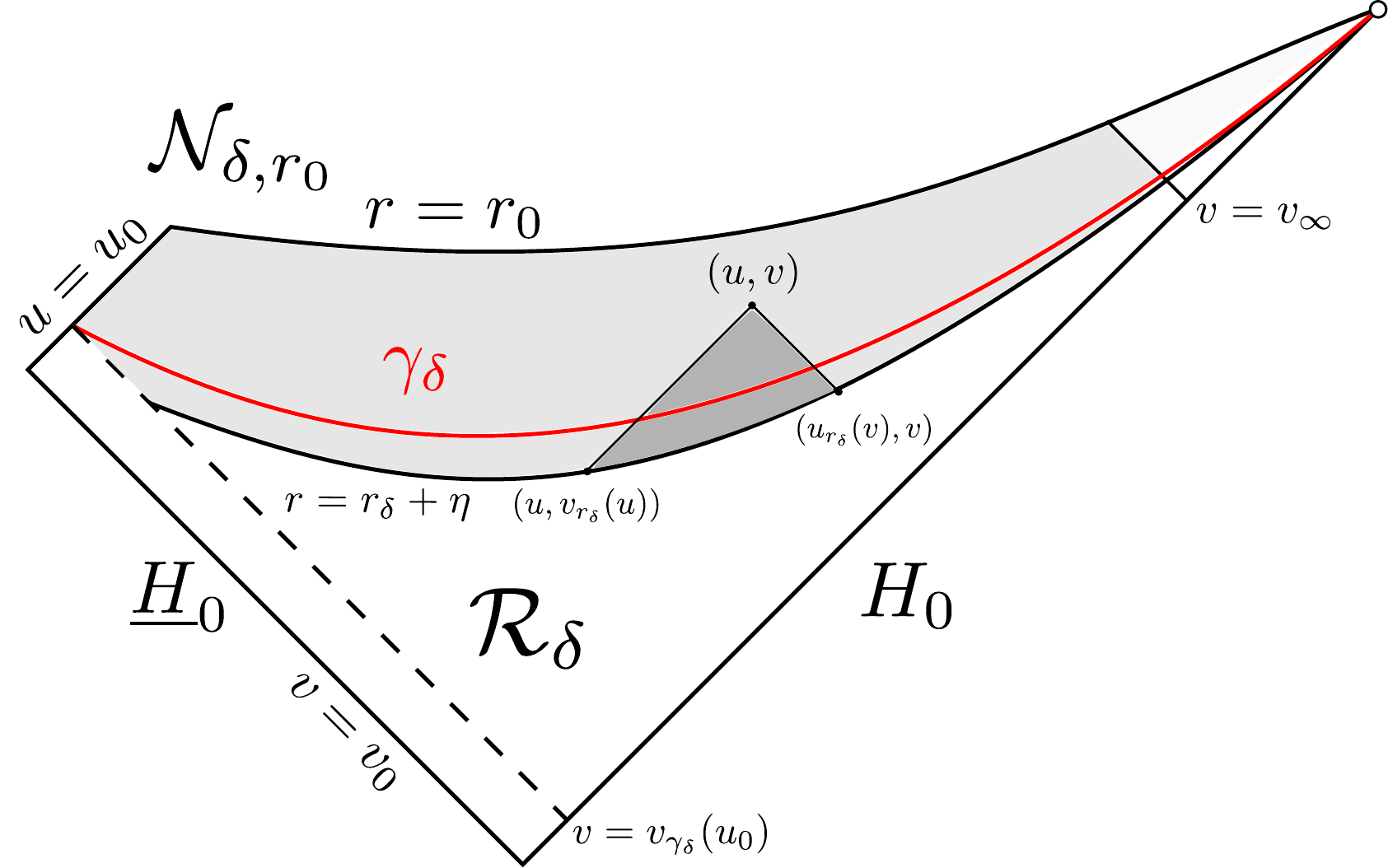}
\end{center}
\vspace{-0.2cm}
\caption{Relevant region for Gr\"onwall inequality.}
	\label{fig:gronwall1}
\end{figure}

We integrate \eqref{propeq:phi1} in $u$ and \eqref{propeq:phi2} in $v$ to obtain the estimate:
\begin{equation*}
\begin{split}
\overline{\Phi}(r(u,v))\leq &\: \overline{\Phi}(r_{\delta}+\eta) +\max\Bigg \{ \int_{u_{r_{\delta}+\eta}(v)}^u r^{-1}|\partial_vr| (M^{-1}v)^{p}|r\partial_u\phi|(u',v)\,du' ,\\
&\int_{v_{r_{\delta}+\eta}(u)}^v r^{-1}|\partial_ur|(M^{-1}|u|)^{p} |r\partial_v\phi|(u,v')\,dv'\Bigg\}\\
\leq &\: \overline{\Phi}(r_{\delta}+\eta)+\int^{r_{\delta}+\eta}_{r(u,v)} r'^{-1} \overline{\Phi}(r')\,dr'+\Bigg|\int^{r_{\delta}+\eta}_{r(u,v)} r'^{-1}\left[\frac{v^p}{|u'|^p}\frac{|\partial_vr|}{|\partial_ur|}-1\right] (M^{-1}|u'|)^p|r\partial_u\phi|\Big|_{v'=v}(r')\,dr'\Bigg |\\
&+\Bigg|\int^{r_{\delta}+\eta}_{r(u,v)} r'^{-1}\left[\frac{|u|^p}{{v'}^p}\frac{|\partial_ur|}{|\partial_vr|}-1\right] (M^{-1}v')^p|r\partial_v\phi|\Big|_{u'=u}(r')\,dr' \Bigg|.
\end{split}
\end{equation*}

We further estimate the last two integrals by applying \eqref{eq:ratiodurdvr}, \eqref{eq:reluvN}, \eqref{ba:dvphi} and \eqref{ba:Yphi} and then we apply a standard Gr\"onwall inequality to obtain:
\begin{equation*}
\begin{split}
\overline{\Phi}(r(u,v))\leq&\: \left(\overline{\Phi}(r_{\delta}+\eta)+C_{\delta,r_0} \cdot C(\Delta_1,D_2,D_3,v_0)(M^{-1}v)^{-2p+2}\right)\frac{r_{\delta}+\eta}{r}.
\end{split}
\end{equation*}

We moreover have in $\mathcal{N}_{\delta,r_0}$ that
\begin{equation*}
v\geq |u|-\frac{1}{\delta}>\frac{1}{2}|u_0|
\end{equation*} 
for $|u_0|$ suitably large depending on $\delta$. Hence, by taking $|u_0|$ to be suitably large depending on $C(\Delta_1,D_2,D_3,v_0)$, $\Delta_2$ and applying and Propositions \ref{prop:baimpphired} and \ref{prop:durdvrr0} to estimate $\overline{\Phi}(r_{\delta}+\eta)$, we obtain:
\begin{align*}
(M^{-1}v)^{p}|r^2\partial_v\phi|(u,v)\leq&\:  \frac{1}{2} M\Delta_1+C_{\delta,r_0} M\cdot (M^{-1}v)^{-2p+2},
\\
(M^{-1}v)^{p}|r^2\partial_u\phi|(u,v)\leq&\:  \frac{1}{2} M\Delta_1+C_{\delta,r_0} M\cdot (M^{-1}v)^{-2p+2}.
\end{align*}
We need to additionally take $|u_0|$ suitably large compared to $C_{\delta,r_0}$ to establish \eqref{eq:dvphinoshift} and \eqref{eq:duphinoshift}. Finally, we apply \eqref{eq:durest2prop} to obtain also \eqref{eq:Yphirnoshift}.
\end{proof}

\subsection{A continuity argument}\label{continuity argument}  
\begin{proposition}
\label{prop:continuityarg}
For $|U_0|$ suitably small, there exists a smooth solution $(r,\widehat{\Omega}^2, \phi)$ to the system \eqref{propeq:r1}--\eqref{propeq:Omega},  \eqref{conseq:r1}--\eqref{propeq:phi2} in $(U,v)$ coordinates (with $\widehat{\Omega}^2$ replacing ${\Omega}^2$) in the region
\begin{equation*}
D_{\infty,r_0}:= [0,U_0)\times [v_0, \infty)\cap\{r\geq r_0\},
\end{equation*}
arising from initial data as prescribed in Section \ref{sec:initialdata}, satisfying moreover the estimates \eqref{ba:dvphi}--\eqref{ba:Omega} and all the estimates in Sections \ref{sec:estromega} and \ref{sec:upboundphi} with $v_{\infty}=\infty$, where $u$ and $\Omega^2$ are related to $U$ and $\widehat{\Omega}^2$ according to \eqref{eq:reluU} and \eqref{eq:relhatomega}, respectively.
\end{proposition}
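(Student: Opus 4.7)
The plan is to run a standard bootstrap/continuity argument in the $v$-direction, after having fixed the constants $\Delta_1$, $\Delta_2$, $\delta$, $|U_0|$ consistently. Define
\[
\mathcal{V} := \{ v_\infty \in [v_0,\infty) : \text{a smooth solution exists on } D_{v_\infty, r_0} \text{ and satisfies } \eqref{ba:dvphi}-\eqref{ba:Omega}\}.
\]
The goal is to show $\mathcal{V} = [v_0,\infty)$ by verifying that $\mathcal{V}$ is non-empty, open in $[v_0,\infty)$, and closed.

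For non-emptiness, I would invoke standard local existence for the characteristic initial value problem for \eqref{propeq:r1}--\eqref{propeq:Omega}, \eqref{conseq:r1}--\eqref{propeq:phi2} (in $(U,v)$ coordinates with $\widehat{\Omega}^2$), which applies since the prescribed data is smooth and $r \geq r_0 > 0$ on $H_0 \cup \underline{H}_0$. This yields a smooth solution on $[0,U_0]\times[v_0,v_0+\eta]$ for some $\eta>0$. By Lemma \ref{lm:rdata} and the data bounds \eqref{eq:estdataphi}--\eqref{eq:estdataphi2}, the quantities appearing in \eqref{ba:dvphi}--\eqref{ba:Omega} are strictly bounded on the initial hypersurfaces by, say, half of the bootstrap bounds, once $\Delta_1, \Delta_2$ are chosen sufficiently large in terms of $D_1,D_2,D_3,v_0$; by continuity they remain bounded by the bootstrap bounds on a slightly thickened neighborhood, giving $v_0+\eta'\in\mathcal{V}$.

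For openness, suppose $v_*\in\mathcal{V}$. By Propositions \ref{prop:baimpphired} and \ref{prop:baimpphinoshift} the bootstrap bounds \eqref{ba:dvphi}, \eqref{ba:Yphi} are improved by a factor of $1/2$, and by Proposition \ref{prop:closingbaOmega} together with \eqref{eq:durest1prop}--\eqref{eq:dvrestprop} the quantity controlled in \eqref{ba:Omega} is also improved by a factor of $1/2$. Hence the bootstrap is improved strictly throughout $D_{v_*,r_0}$. Standard local existence then extends the solution to $[0,U_0]\times[v_0, v_*+\eta]\cap\{r\geq r_0\}$ for some $\eta>0$, and by continuity the strict improvement persists on this slightly larger domain, so $v_*+\eta'\in\mathcal{V}$.

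For closedness, take a sequence $v_n\nearrow v_*$ with $v_n\in\mathcal{V}$. The bootstrap assumptions and improved estimates from Sections \ref{sec:estromega}--\ref{sec:upboundphi} hold uniformly in $n$ on $D_{v_n,r_0}$, and by commuting the equations with $\partial_u, \partial_v$ (using that $r\geq r_0>0$ everywhere, so the equations are a regular quasilinear hyperbolic system in $(U,v)$ coordinates), one obtains uniform higher-order control. A standard Arzel\`a--Ascoli/compactness extraction then produces a smooth limiting solution on $D_{v_*,r_0}$ that still satisfies the improved bootstrap, whence $v_*\in\mathcal{V}$.

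The main obstacle is the \emph{consistency of parameter choices}, not the architecture of the argument: the improvements in Propositions \ref{prop:baimpphired}, \ref{prop:baimpphinoshift}, \ref{prop:closingbaOmega} all require $\Delta_1$ large compared with $D_2, D_3$; $\Delta_2$ large compared with the function $C(\Delta_1, D_2,D_3,v_0)$; $\delta$ small depending on these; and finally $|u_0|$ large (i.e.\ $|U_0|$ small) depending on $\delta$, $\Delta_1$, $\Delta_2$, $D_i$, $v_0$, and $r_0$. One must verify that these constants can be fixed in this order without circularity, and that for the resulting choice of $|U_0|$ the initial data on $\underline{H}_0$ is strictly below the bootstrap thresholds so that non-emptiness of $\mathcal{V}$ can be initiated. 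Once this bookkeeping is in place, the three steps above close.
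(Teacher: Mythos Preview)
Your proposal is correct and follows essentially the same continuity/bootstrap architecture as the paper. One point worth noting: because $D_{v_\infty,r_0}$ is defined with the half-open interval $[v_0,v_\infty)$, closedness of $\mathcal{V}$ is immediate (the solutions on the nested $D_{v_n,r_0}$ agree by uniqueness and cover $D_{v_*,r_0}$), so your Arzel\`a--Ascoli extraction is unnecessary; the paper instead places the propagation-of-regularity work in the openness step, where one first extends the solution from $[v_0,V)$ to the closed slab $[v_0,V]$ before invoking local existence from $\{v=V\}$.
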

\begin{proof}
We introduce the following conditions for $v_0\leq v_{\infty}\leq \infty$:
\begin{enumerate}[(A)]
\item A smooth solution $(r,\widehat{\Omega}^2, \phi)$ exists in $D_{v_{\infty},r_0}$ with the prescribed initial data.
\item The estimates \eqref{ba:dvphi}--\eqref{ba:Omega} hold in $D_{v_{\infty},r_0}$.
\end{enumerate}
Now consider the set $\mathcal{V}\subseteq [v_0,\infty)$, defined as
\begin{equation*}
\mathcal{V}=\{V\in [v_0,\infty)\,|\, \textnormal{ (A) and (B) hold for all $v_0\leq v_{\infty}< V$}\}.
\end{equation*}
By standard local existence theory, $\mathcal{V}$ is non-empty. Closedness of $\mathcal{V}$ follows immediately from the definition of $\mathcal{V}$. In order to conclude that $\mathcal{V}=[v_0,\infty)$, it remains to establish openness of $\mathcal{V}$.

Suppose $V\in \mathcal{V}$. 

By the boundedness properties established in $D_{V,r_0}$ following from Sections \ref{sec:estromega} and \ref{sec:upboundphi}, together with a standard propagation of regularity argument, the condition (A) holds in
\begin{equation*}
[0,U_0)\times [v_0, V]\cap\{r\geq r_0\}.
\end{equation*}
Applying local existence theory again, with the restrictions of $(r,\widehat{\Omega}^2, \phi)$ on $v=V$ (and $H_0$) taken as initial data, it follows that there exists $\delta>0$ such that (A) holds in $D_{V+\delta,r_0}$.

Finally, since the estimates in Sections \ref{sec:estromega} and \ref{sec:upboundphi} improve the bootstrap estimates \eqref{ba:dvphi}--\eqref{ba:Omega} in $D_{V,r_0}$, it follows by continuity that (B) also holds in $D_{V+\delta,r_0}$, provided $\delta>0$ is taken to be suitably small. Hence, $(v-\delta,v+\delta)\subset \mathcal{V}$ and openness follows.
\end{proof}
\subsection{Additional lower bound estimates for $\phi$}
In this section, we derive additional lower bound estimates for $\partial_u\phi$ and $\partial_v\phi$ on $\{r\geq r_0\}$. We restrict to the region $D_{\infty,r_0}$.
\begin{proposition}
\label{prop:lowboundsposr0}
Let $\epsilon>0$ and $2M<r_1\leq r_0$. Then there exists $v_1\geq v_0$ suitably large, depending on $r_0$, $\epsilon$, $D_1$ and $\Delta_1$, so that for all $v\geq v_1$ and suitably large $|u_0|$:
\begin{align}
\label{eq:lowbounddvphi0}
r\partial_v\phi(u,v)\geq&\: (D_1-\epsilon) (M^{-1}v)^{-q},\\
\label{eq:lowboundYphi0}
r^2 Y\phi(u,v)\geq&\: \left(2Mr_1^{-2}-\frac{1}{2}r_1^{-1}\right)^{-1} (D_1-\epsilon)(M^{-1}v)^{-q},
\end{align}
for all $(u,v)$ in $\{r\geq r_1\}$, where $2M<r_1\leq r_0$.

Furthermore, along $\{r=r_1\}$, we obtain for $v\geq v_1$:
\begin{equation}
\label{eq:lowboundduphi}
r\partial_u\phi|_{r=r_1}\geq \frac{M-\frac{1}{2}r_1}{2M-\frac{1}{2}r_1}(D_1-\epsilon)(M^{-1}v)^{-q}.
\end{equation}
\end{proposition}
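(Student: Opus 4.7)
The plan is to propagate the event-horizon lower bound $r\partial_v\phi(0,v)\geq D_1(M^{-1}v)^{-q}$ (given by the assumption \eqref{eq:estdataphi}) inward into the region $\{r\geq r_1\}$ using \eqref{propeq:phi1}, and then convert the resulting bound into one for $r^2Y\phi$ via the linear transport equation \eqref{propeq:Yphi}. First I would substitute $\partial_u\phi = -\partial_u r\cdot Y\phi$ into \eqref{propeq:phi1} to obtain $\partial_u(r\partial_v\phi) = \partial_u r\cdot \partial_v r\cdot Y\phi$, integrate along a constant-$v$ slice from $u=-\infty$ to $u$, and change the integration variable from $u$ to $r$ via $du=dr/\partial_u r$, yielding
\[ r\partial_v\phi(u,v) = r\partial_v\phi(-\infty,v) - \int_{r(u,v)}^{r_{\mathcal H^+}(v)} \partial_v r\cdot Y\phi\,\big|_{v'=v}\,dr'. \]
The horizon value is bounded below by $D_1(M^{-1}v)^{-q}$ by \eqref{eq:estdataphi}, so I would need to dominate the remaining $r'$-integral by $\tfrac12\epsilon(M^{-1}v)^{-q}$. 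This I would do by using \eqref{eq:dvrestprop} in the form $\partial_v r = -(M-r/2)/r + O((M^{-1}v)^{-2p+1}/r)$ and the bootstrap \eqref{ba:Yphi} for $Y\phi$, exploiting the vanishing of the leading factor $(M-r/2)$ at $r=2M$, the subleading nature of the $(M^{-1}v)^{-3p+1}$ correction (guaranteed by the exponent condition $q<3p-1$), and the convergence $r_{\mathcal H^+}(v)\to 2M$ from \eqref{eq:estdatar} when $v_1$ is taken sufficiently large.

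To obtain \eqref{eq:lowboundYphi0} I would next view \eqref{propeq:Yphi} as a linear ODE in $v$ at constant $u$. Combining \eqref{eq:durest1prop} with the estimates of Proposition \ref{prop:durdvrr0} identifies the coefficient of $r^2Y\phi$ on its right-hand side as $-(2Mr^{-2}-\tfrac12 r^{-1})$ plus lower-order corrections, while the source term $r\partial_v\phi$ is controlled below by Step~1. An integrating-factor/Gr\"onwall argument, in direct analogy with the derivation of \eqref{eq:lowboundYphialongH0} inside the proof of Lemma \ref{lm:rdata} but with the leading coefficient evaluated at $r=r_1$ rather than at $r=2M$, then yields \eqref{eq:lowboundYphi0}; the factor $(2Mr_1^{-2}-\tfrac12 r_1^{-1})^{-1}$ in the statement is precisely the reciprocal of this leading coefficient. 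Finally \eqref{eq:lowboundduphi} is immediate from the algebraic identity $r\partial_u\phi = -\partial_u r\cdot rY\phi$ evaluated on $\{r=r_1\}$, combining the lower bound $-\partial_u r\geq Mr_1^{-1}-\tfrac12 - O((M^{-1}v)^{-2p+2})$ from \eqref{eq:durest2prop} with the bound on $rY\phi$ from Step~2 and invoking $(M-r_1/2)/(r_1^2(2Mr_1^{-2}-r_1^{-1}/2))=(M-r_1/2)/(2M-r_1/2)$.

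The main technical obstacle is the estimate of the error integral in Step~1: the bootstrap upper bound on $rY\phi$ decays only at the rate $(M^{-1}v)^{-p}$, which is in general slower than the target rate $(M^{-1}v)^{-q}$, and so a crude $L^\infty$ bound on the integrand is insufficient. Overcoming this requires careful bookkeeping: one must simultaneously exploit the structural zero of $\partial_v r$ at $r=2M$ coming from \eqref{eq:dvrestprop} (which renders the dominant part of the integrand small near the horizon) and the exponent hypothesis $q<3p-1$, in the same way that this condition was used in the proof of \eqref{eq:idlowboundYphi} in Lemma \ref{lm:rdata} to absorb $(M^{-1}v)^{-3p+1}$ subleading terms into the main $(M^{-1}v)^{-q}$ part. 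Enlarging $v_1$ and $|u_0|$ appropriately, depending on $r_0$, $\Delta_1$, $D_1$, and $\epsilon$, then suffices to absorb the error below $\epsilon(M^{-1}v)^{-q}$ throughout $\{r\geq r_1\}$.
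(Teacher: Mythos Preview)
Your overall plan (propagate the horizon lower bound inward via \eqref{propeq:phi1}, then use the ODE \eqref{propeq:Yphi} with an integrating factor, then read off \eqref{eq:lowboundduphi} algebraically) matches the paper's, and Steps~2 and~3 are essentially correct. However, there is a genuine gap in your Step~1.

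You correctly identify the difficulty: the bootstrap \eqref{ba:Yphi} only gives $|rY\phi|\lesssim (M^{-1}v)^{-p}$, which is slower than the target $(M^{-1}v)^{-q}$. But your proposed fix---``exploiting the structural zero of $\partial_v r$ at $r=2M$''---cannot close the estimate. The integral $\int_{r(u,v)}^{r_{\mathcal H^+}(v)}\partial_v r\,Y\phi\,dr'$ runs from $r(u,v)\geq r_1$ (which may be far from $2M$) up to $r_{\mathcal H^+}(v)$, and over most of this range the leading part $(M-r'/2)/r'$ of $\partial_v r$ is of unit size, not small. A direct absolute-value bound therefore yields only $O\bigl((M^{-1}v)^{-p}\bigr)$, which cannot be absorbed into $\epsilon(M^{-1}v)^{-q}$ when $q>p$. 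The vanishing at $r=2M$ and the convergence $r_{\mathcal H^+}\to 2M$ help only in a neighbourhood of $2M$, not on the bulk of the integration interval.

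What the paper does instead is to first establish the \emph{sign} $Y\phi\geq 0$ for $v\geq v_1$: this comes from the lower bound \eqref{eq:idlowboundYphi} on $\mathcal H$, propagated to $(u,v_1)$ by continuity for $|u_0|$ large, and then forward in $v$ by \eqref{propeq:Yphi}. With $Y\phi\geq 0$ in hand, the integral is split at the apparent horizon $r=r_{\mathcal A}(v)$. In the trapped part $r(u,v)\leq r'\leq r_{\mathcal A}(v)$ one has $\partial_v r\leq 0$, so $-\int\partial_v r\,Y\phi\,dr'\geq 0$ and this contribution is simply \emph{dropped}. Only the untrapped sliver $r_{\mathcal A}(v)\leq r'\leq r_{\mathcal H^+}(v)$ remains, and there both the interval length and $|\partial_v r|$ are $O((M^{-1}v)^{-2p+1})$ by \eqref{eq:estdatar}, \eqref{eq:dvrestprop} and Proposition~\ref{prop:apparenthordecay}; together with \eqref{ba:Yphi} this gives an error $O((M^{-1}v)^{-5p+2})$, which is indeed $o((M^{-1}v)^{-q})$ since $q<3p-1<5p-2$. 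Your ``structural zero'' observation is then exactly what controls this remaining sliver---but only \emph{after} the sign argument has disposed of the bulk.
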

\begin{proof}
By \eqref{eq:idlowboundYphi} together with continuity of $Y\phi(U,v_1)$ in $U$ at $U=0$, it follows that for suitably large $|u_0|$, we can estimate:
\begin{equation}
\label{eq:Yphiv1}
r^2Y\phi(u,v_1)\geq 4M(1-2\epsilon)D_1(M^{-1}v_1)^{-q}.
\end{equation}
By \eqref{propeq:Yphi} it then follows that for $v\geq v_1$ we have that $Y\phi\geq 0$.

By integrating \eqref{propeq:phi1} and using that $Y\phi(u,v)\geq 0$ for $v\geq v_1$, we obtain for all $v\geq v_1$:
\begin{equation*}
\begin{split}
r\partial_v\phi(u,v)=&\:r\partial_v\phi(-\infty,v)+ \int_{-\infty}^u [\partial_vr \partial_ur Y\phi](u',v)\,du'\\
=&\:r\partial_v\phi(-\infty,v)-\int_{r(u,v)}^{r_{\mathcal{A}}(v)} \partial_v r Y\phi\Big|_{v'=v}(r')\,dr' -\int^{r_{\mathcal{H}^+}(v)}_{r_{\mathcal{A}}(v)} \partial_v r Y\phi\Big|_{v'=v}(r')\,dr'\\
\geq &\: r\partial_v\phi(-\infty,v) -\int^{r_{\mathcal{H}^+}(v)}_{r_{\mathcal{A}}(v)} \partial_v r Y\phi\Big|_{v'=v}(r')\,dr'\\
\geq &\: r\partial_v\phi(-\infty,v)- (|r_{\mathcal{H}^+}(v)-2M|+ |r_{\mathcal{A}}(v)-2M|) \sup_{-\infty\leq u'\leq u_{\mathcal{A}}(v)}|\partial_v r||Y\phi|(u',v)\\
\geq &\: D_1 (M^{-1}v)^{-q}- C(\Delta_1,D_2,D_3,v_0) (M^{-1}v)^{-5p+2},
\end{split}
\end{equation*}
where we used \eqref{eq:estdatar}, \eqref{eq:dvrestprop} \eqref{ba:Yphi} and moreover, we used that by \eqref{eq:dvrestprop}:
\begin{equation*}
|r_{\mathcal{A}}-2M|\leq  C(\Delta_1,D_2,D_3,v_0) M (M^{-1}v)^{-2p+1}.
\end{equation*}

Hence, using that $q< 5p-2$, it follows that we can choose $v_1$ suitably large, depending on $D_1$, $\Delta_1$ and $\epsilon$, so that
\begin{equation*}
r\partial_v\phi(u,v)\geq (D_1-\epsilon) (M^{-1}v)^{-q}
\end{equation*}
for all $v\geq v_1$.

Furthermore, for an arbitrary small constant $\epsilon_0>0$ and
\begin{equation*}
\delta_1:= (D_1-\epsilon)(2Mr_1^{-2}-\frac{1}{2}r_1^{-1}+\epsilon_0M^{-1})^{-1},
\end{equation*}
we can write
\begin{equation}
\label{eq:auxestdvphi}
r\partial_v\phi(u,v)\geq \delta_1 (2Mr_1^{-2}-\frac{1}{2}r_1^{-1}+\epsilon_0M^{-1}) (M^{-1}v)^{-q}.
\end{equation}
Note that $\delta_1$ is well-defined and positive since $r_1<2M$. 

We will now apply \eqref{propeq:Yphi}. Observe first that by \eqref{eq:durest1prop} and \eqref{eq:dvrestprop} and $r\geq r_0$, $v\geq v_1$:
\begin{equation*}
\begin{split}
\left|\frac{1}{4}\frac{\Omega^2}{r\partial_u r}+2\frac{\partial_vr}{r}+\frac{2M}{r^2}-\frac{1}{2}r^{-1}\right|\leq&\: C_{r_0}(\Delta_1,D_2,D_3,v_0)M^{-1} (M^{-1}v)^{-2p+1}\\
\leq&\: C_{r_0}(\Delta_1,D_2,D_3,v_0)M^{-1}  (M^{-1}v_1)^{-2p+1}=:\epsilon_0M^{-1}.
\end{split}
\end{equation*}
By definition, $\epsilon_0>0$ is arbitrarily small for suitably large $v_1$.

By \eqref{propeq:Yphi}  and \eqref{eq:Yphiv1} it then follows that for $v\geq v_1$:
\begin{equation*}
\partial_v(r^2 Y\phi)\geq -\left[2Mr_1^{-2}-\frac{1}{2}r_1^{-1}+\epsilon_0M^{-1}\right]r^2Y\phi+r\partial_v\phi.
\end{equation*}
By rearranging the equation, we conclude that
\begin{equation*}
\partial_v(e^{(2Mr_1^{-2}-\frac{1}{2}r_1^{-1}+\epsilon_0M^{-1})v} r^2 Y\phi)\geq e^{(2Mr_1^{-2}-\frac{1}{2}r_1^{-1}+\epsilon_0M^{-1})v} r\partial_v\phi
\end{equation*}
and hence, for $v\geq v_1$, we have that
\begin{equation*}
\begin{split}
e^{(2Mr_1^{-2}-\frac{1}{2}r_1^{-1}+\epsilon_0M^{-1})v} r^2 Y\phi(u,v)\geq&\: e^{(2Mr_1^{-2}-\frac{1}{2}r_1^{-1}+\epsilon_0M^{-1})v_1} r^2 Y\phi(u,v_1)\\
&+\int_{v_1}^v e^{(2Mr_1^{-2}-\frac{1}{2}r_1^{-1}+\epsilon_0M^{-1})v'} r \partial_v\phi(u',v)\,dv'.
\end{split}
\end{equation*}

We apply \eqref{eq:auxestdvphi} and \eqref{eq:Yphiv1} to conclude that
\begin{equation*}
\begin{split}
r^2 Y\phi(u,v)\geq&\:  e^{(2Mr_1^{-2}-\frac{1}{2}r_1^{-1}+\epsilon_0M^{-1})(v_1-v)} r^2 Y\phi(u,v_1) \\
&+(2Mr_1^{-2}-\frac{1}{2}r_1^{-1}+\epsilon_0M^{-1})e^{-(2Mr_1^{-2}-\frac{1}{2}r_1^{-1}+\epsilon_0M^{-1})v}\cdot \\
&\cdot \int_{v_1}^v e^{(2Mr_1^{-2}-\frac{1}{2}r_1^{-1}+\epsilon_0M^{-1})v'} \delta_1 (M^{-1}v')^{-q}\,dv'\\
\geq&\: e^{(2Mr_1^{-2}-\frac{1}{2}r_1^{-1}+\epsilon_0M^{-1})(v_1-v)} r^2 Y\phi(u,v_1)\\
&+e^{-(2Mr_1^{-2}-\frac{1}{2}r_1^{-1}+\epsilon_0M^{-1})v} \int_{v_1}^v \frac{d}{dv}\left(e^{(2Mr_1^{-2}-\frac{1}{2}r_1^{-1}+\epsilon_0M^{-1})v'}  \delta_1 (M^{-1}v')^{-q}\right)\,dv'\\
\geq &\:e^{(2Mr_1^{-2}-\frac{1}{2}r_1^{-1}+\epsilon_0M^{-1})(v_1-v)} [4M(1-2\epsilon) D_1- \delta_1](M^{-1}v_1)^{-q}+   \delta_1 (M^{-1}v)^{-q}\\
\geq &\: \delta_1 (M^{-1}v)^{-q}
\end{split}
\end{equation*}
if $\delta_1<4M(1-2\epsilon) D_1$, which can be arranged provided $r_1<2M$. We conclude that
\begin{equation*}
r^2 Y\phi(u,v)\geq (D_1-\epsilon)(2Mr_1^{-2}-\frac{1}{2}r_1^{-1}+\epsilon_0M^{-1})^{-1}.
\end{equation*}
In order to obtain \eqref{eq:lowboundYphi0}, we simply replace $\epsilon>0$ in the above argument with $\frac{\epsilon}{2}$ and absorb the $\epsilon_0$ term, using that $\epsilon_0$ can be taken suitably small for $v_1$ suitably large.

By \eqref{eq:durest2prop} and $v_1$ suitably large compared to $r_0^{-1}$, we obtain as a corollary:
\begin{equation*}
r\partial_u\phi|_{r=r_1}=(-\partial_ur) rY\phi|_{r=r_1}\geq \frac{M-\frac{1}{2}r_1}{2M-\frac{1}{2}r_1+r_1^2\epsilon_0 M^{-1}}(D_1-\epsilon)(M^{-1}v)^{-q}.
\end{equation*}
Again, we can replace $\epsilon>0$ in the above argument with $\frac{\epsilon}{2}$ and choose $v_1$ so that $\epsilon_0$ is appropriately small to obtain \eqref{eq:lowboundduphi}.
\end{proof}

\begin{proposition}
\label{prop:lowboundsposr}
Let $\epsilon>0$. Then there exists $v_1\geq v_0$, $|u_1|>|u_0|$, with $v_1,|u_0|, |u_1|$ suitably large, depending on $r_0$, $\epsilon$, $D_1$ and $\Delta_1$, so that for all $v\geq v_1$, $|u|\geq |u_1|$, and $r_0\leq r\leq (4-2\sqrt{2})M$:
\begin{align}
\label{eq:lowbounddvphi}
r^2\partial_v\phi(u,v)\geq&\: (6-4\sqrt{2})M(D_1-\epsilon) (M^{-1}v)^{-q},\\
\label{eq:lowboundduphi}
r^2\partial_u\phi(u,v)\geq&\: (6-4\sqrt{2})M(D_1-\epsilon)(M^{-1}v)^{-q}.
\end{align}
\end{proposition}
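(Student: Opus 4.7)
The plan is to combine the pointwise lower bounds from Proposition \ref{prop:lowboundsposr0} with a propagation argument based on the wave-equation identities \eqref{propeq:phi1}--\eqref{propeq:phi2} and \eqref{propeq:Yphi}. The specific constants in the statement are dictated by the algebraic fact that the function $f(r) := r(2M-r)/(4M-r)$, regarded on $[0,2M]$, attains its unique interior maximum at $r = (4-2\sqrt{2})M$ with maximum value $(6-4\sqrt{2})M$.

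First I would establish pointwise lower bounds in the bulk.  For each $(u,v)$ with $r_0 \leq r(u,v) = r_\ast \leq (4-2\sqrt{2})M$, applying Proposition \ref{prop:lowboundsposr0} with the optimal choice $r_1 = r_\ast$ yields
\[
 r^2 Y\phi(u,v) \geq \frac{r_\ast^2}{2M-r_\ast/2}(D_1-\epsilon)(M^{-1}v)^{-q}.
\]
Combining this with $r^2\partial_u\phi = (-\partial_u r)\cdot r^2 Y\phi$ and the expansion $-\partial_u r = (2M-r)/(2r) + o(1)$ extracted from Proposition \ref{prop:durdvrr0} (valid for $v_1$ suitably large), one obtains $r^2\partial_u\phi \geq f(r_\ast)(D_1-\epsilon')(M^{-1}v)^{-q}$ for some $\epsilon'>\epsilon$ absorbing the Schwarzschild-type corrections.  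On the spacelike curve $\gamma := \{r=(4-2\sqrt{2})M\}$ this equals exactly $(6-4\sqrt{2})M(D_1-\epsilon')(M^{-1}v)^{-q}$, matching the desired lower bound; the direct bound $r^2\partial_v\phi \geq r(D_1-\epsilon)(M^{-1}v)^{-q}$ from \eqref{eq:lowbounddvphi0} likewise exceeds the desired threshold on $\gamma$ since $(4-2\sqrt{2})M > (6-4\sqrt{2})M$.

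Second, to extend the bounds to $\{r < (4-2\sqrt{2})M\}$, where $f(r) < (6-4\sqrt{2})M$, I would propagate using \eqref{propeq:Yphi}.  After the Schwarzschild-like substitution and a change of variable from $v$ to $r$ along constant-$u$ hypersurfaces, this equation rearranges into the clean integrating-factor identity
\[
 \frac{d}{dr}\bigl[\mu(r)\cdot r^2 Y\phi\bigr] = -2\partial_v\phi + o(1), \qquad \mu(r) := \frac{2M-r}{r^2}.
\]
Integrating from $\gamma$ inward (i.e.\ to smaller $r$), with boundary value given by Step~1 and using the pointwise lower bound $\partial_v\phi \geq (D_1-\epsilon)(M^{-1}v)^{-q}/r$ from \eqref{eq:lowbounddvphi0} on the forcing term, produces a propagated lower bound on $r^2 Y\phi$, and hence (via $-\partial_u r$) on $r^2\partial_u\phi$.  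The symmetric argument along constant-$v$ hypersurfaces, combined with the trapped-region monotonicity $\partial_u(r\partial_v\phi) = \partial_u r\,\partial_v r\cdot Y\phi \geq 0$ implied by \eqref{propeq:phi1}, yields the corresponding uniform lower bound on $r^2\partial_v\phi$.

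The main obstacle is to perform these Gr\"onwall-type integrations sharply enough to preserve the constant $(6-4\sqrt{2})M$ uniformly in $r \in [r_0, (4-2\sqrt{2})M]$, with $r_0$ arbitrarily small.  The tight interplay between the integrating-factor coefficient $\mu(r) = (2M-r)/r^2$ and the $r$-pointwise lower bounds on $Y\phi$ and $\partial_v\phi$ is essential, and the choice of $|u_1|$ and $v_1$ suitably large (depending on $r_0$, $\epsilon$, $D_1$, $\Delta_1$) is required both to control the $o(1)$ Schwarzschild errors and to ensure that the propagation paths remain strictly inside the trapped region where the sign of the monotonicity identity is favourable.
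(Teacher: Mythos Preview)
Your Step 1 is fine and gives the correct boundary values at $r=(4-2\sqrt{2})M$. The gap is in Step 2: the decoupled propagation you describe does \emph{not} preserve the constant $(6-4\sqrt{2})M$ uniformly down to $r_0$. Carrying out your integrating-factor identity with the only lower bound you have available for the source, namely $\partial_v\phi\geq (D_1-\epsilon)(M^{-1}v)^{-q}/r$ from \eqref{eq:lowbounddvphi0}, and converting back via $r^2\partial_u\phi\approx \tfrac{r}{2}\,\mu(r)r^2Y\phi$, yields
\[
r^2\partial_u\phi \;\gtrsim\; (D_1-\epsilon)(M^{-1}v)^{-q}\cdot \frac{r}{2}\Bigl[\text{const}+2\log\bigl(r_\gamma/r\bigr)\Bigr],
\]
which tends to $0$ as $r\downarrow 0$. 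Similarly, the monotonicity $\partial_u(r\partial_v\phi)\geq 0$ only gives $r^2\partial_v\phi\geq r\cdot\text{const}$, again vanishing as $r\to r_0$ with $r_0$ small. So neither half of your Step 2 yields the claimed uniform bound.

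What is missing is the \emph{coupled} amplification. The paper does not use \eqref{propeq:Yphi} here at all; instead it treats $r\partial_u\phi$ and $r\partial_v\phi$ symmetrically via the pair \eqref{propeq:phi1}--\eqref{propeq:phi2}. Defining
\[
\underline{\Phi}(s)=\min\Bigl\{\inf_{\{r=s\}}(M^{-1}v)^{q}r\partial_v\phi,\ \inf_{\{r=s\}}(M^{-1}|u|)^{q}r\partial_u\phi\Bigr\},
\]
each equation has source $\approx r^{-1}$ times the \emph{other} quantity, hence $\geq r^{-1}\underline{\Phi}(r)$. This closes into $\underline{\Phi}(r)\geq \underline{\Phi}(r_1)+\int_{r_1}^{r}(-r'^{-1})\underline{\Phi}(r')\,dr'$, and the reverse Gr\"onwall inequality (Proposition~\ref{prop:revgron}) gives $\underline{\Phi}(r)\geq \underline{\Phi}(r_1)\cdot r_1/r$, i.e.\ $r^2\partial\phi\geq r_1\cdot r\partial\phi|_{r_1}$ uniformly. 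Evaluating $\underline{\Phi}(r_1)$ via Proposition~\ref{prop:lowboundsposr0} and optimising $r_1\mapsto r_1(2M-r_1)/(4M-r_1)$ at $r_1=(4-2\sqrt{2})M$ then yields the constant $(6-4\sqrt{2})M$. The point is that feeding each derivative back into the other's equation gives the full $r_1/r$ gain; your decoupled scheme only extracts a logarithmic gain from the source.
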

\begin{proof}
Denote:
\begin{equation*}
\underline{\Phi}(s):=\min \Big \{\inf_{\{r(u,v)= s\}\cap\{|u|\geq |u_1|,v\geq v_1\}} (M^{-1}|v|)^{q} r\partial_v\phi , \inf_{\{r(u,v)= s\}\cap\{|u|\geq |u_1|,v\geq v_1\}}  (M^{-1}|u|)^{q} r\partial_u\phi \Big \},
\end{equation*}
with $r_0\leq s\leq r_1$, where $r_1<2M$ will be determined later and $|u_1|>|u_0|$, such that $v_{r_1}(u)>v_1$ for all $|u|\geq |u_1|$.
 \begin{figure}[H] 
	\begin{center}
\includegraphics[scale=0.5]{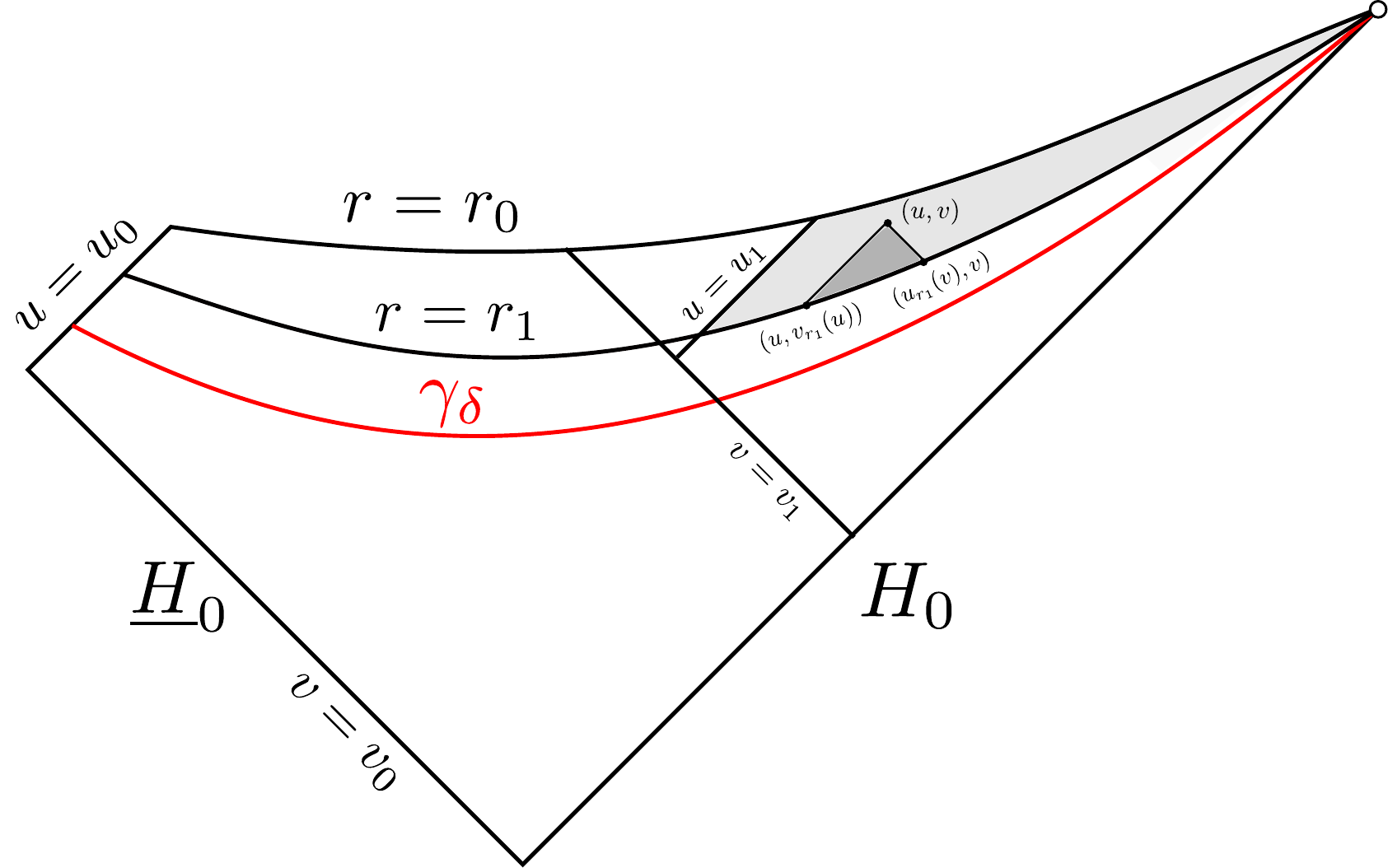}
\end{center}
\vspace{-0.2cm}
\caption{Relevant region for reverse Gr\"onwall inequality.}
	\label{fig:gronwall2}
\end{figure}
We integrate \eqref{propeq:phi1} in $u$ and \eqref{propeq:phi2} in $v$ and use that $\partial_v\phi>0$ and $\partial_u\phi>0$ in $\{r\geq r_0\}\cap\{v\geq v_1\}$ by Proposition \ref{prop:lowboundsposr0} to obtain
\begin{equation*}
\begin{split}
\underline{\Phi}(r(u,v))\geq &\: \underline{\Phi}(r_1) +\min \Bigg \{ \int_{u_{r_{1}}(v)}^u r^{-1}(-\partial_vr) (M^{-1}v)^{q}r\partial_u\phi(u',v)\,du' ,\\
&\int_{v_{r_{1}}(u) }^v r^{-1}(-\partial_ur)(M^{-1}|u|)^{q} r\partial_v\phi(u,v')\,dv'\Bigg\}\\
\geq &\:  \underline{\Phi}(r_1)+\int^{r_1}_{r(u,v)} r'^{-1} \underline{\Phi}(r')\,dr'-\Bigg|\int^{r_{1}}_{r(u,v)} r'^{-1}\left[\frac{v^q}{|u'|^q}\frac{|\partial_vr|}{|\partial_ur|}-1\right] (M^{-1}|u'|)^q|r\partial_u\phi|\Big|_{v'=v}(r')\,dr' \Bigg |\\
&-\Bigg | \int^{r_{1}}_{r(u,v)} r'^{-1}\left[\frac{|u|^q}{{v'}^q}\frac{|\partial_ur|}{|\partial_vr|}-1\right] (M^{-1}v')^q|r\partial_v\phi|\Big|_{u'=u}(r')\,dr'\Bigg|.
\end{split}
\end{equation*}
We further estimate the last two integrals by applying \eqref{eq:ratiodurdvr}, \eqref{eq:reluvN}, \eqref{ba:dvphi} and \eqref{ba:Yphi}, and then we apply Proposition \ref{prop:revgron} to obtain:
\begin{equation*}
\begin{split}
\underline{\Phi}(r(u,v))\geq \left(\underline{\Phi}(r_1)-C_{\delta,r_0} \cdot C(\Delta_1,D_2,D_3,v_0)(M^{-1}v)^{-2q+2}\right)\frac{r_1}{r}.
\end{split}
\end{equation*}
Together with Proposition \ref{prop:lowboundsposr0}, if $|u_0|$ is suitably large, we obtain
\begin{align*}
r^2\partial_v\phi(u,v)\geq&\: \frac{Mr_1-\frac{1}{2}r_1^2}{2M-\frac{1}{2}r_1}(D_1-\epsilon)(M^{-1}v)^{-q},\\
r^2\partial_u\phi(u,v)\geq&\: \frac{Mr_1-\frac{1}{2}r_1^2}{2M-\frac{1}{2}r_1}(D_1-\epsilon)(M^{-1}v)^{-q}.
\end{align*}
The $r_1$-dependent coefficient on the right-hand sides is maximized for $\frac{r_1}{M}=4-2\sqrt{2}$. With this choice, we arrive at \eqref{eq:lowbounddvphi} and \eqref{eq:lowboundduphi}.
\end{proof}

\section{Estimates near spacelike singularity}
\label{sec:estnearsing}
The continuity argument in Section \ref{continuity argument} works for any small $r_0>0$. And the proof is uniformly independent of $r_0$. In particular, with initial data prescribed along $H_0$ and $\underline{H}_0$ satisfying \eqref{eq:estdataphi} and \eqref{eq:estdataphi2}, we hence deduce that there exists a smooth solution $(r,\widehat{\Omega}^2, \phi)$ to the system \eqref{propeq:r1}--\eqref{propeq:Omega},  \eqref{conseq:r1}--\eqref{propeq:phi2} in $(U,v)$ coordinates (with $\widehat{\Omega}^2$ replacing ${\Omega}^2$) in the region $D_{\infty, 0}:= [0,U_0)\times [v_0, \infty)\cap\{r>0\}$ where $|U_0|$ is small. In this section, we will derive more precise bounds for $r\partial_u r, r\partial_v r$, $r^2\partial_u \phi, r^2\partial_v \phi$, $\partial_u \log(r\O^2), \partial_v \log(r\O^2), r\O^2$ in $(u,v)$ coordinates. And with these bounds, we will prove the main conclusions of this paper.

{\color{black} Let $r_0>0$ be a fixed radius, as defined in Section \ref{Section7}. We will assume in this section that $\frac{r_0}{M}$ is suitably small and consider the region}
$$|u|\geq |u_1|+2v_{r=0}(u_1), \quad v\geq \max\{v_1, \t v_1\} \quad \mbox{and} \quad r(u,v)\leq r_0.$$ 
Here $u_1, v_1$ are chosen as in Proposition \ref{prop:lowboundsposr}. And $\t v_1$ is required to satisfy another property: for $v\geq \t v_1$ along $r=r_0$, it holds
\be\label{additional v}
|\partial_v \log(r\O^2)(u_{r_0(v)},v)|\leq\f{1}{r_0}.
\ee

Note that with {\color{black}the} estimates in Proposition \ref{prop:durdvrr0} and Proposition \ref{prop:metricest}, {\color{black} we have that \eqref{additional v} is satisfied along $r=r_0$ for suitably large $\t v_1$.} {\color{black}Let us} first recall Proposition \ref{prop:durdvrr0}:
$$|\partial_v\log (r^2\Omega^2)-r^{-1}+Mr^{-2}|(u_{r_0(v)},v)\leq C_{r_0}\cdot C(\Delta_1,D_2,D_3,v_0) (M^{-1}v)^{-2p}.$$
which is equivalent to
\begin{equation}\label{log r O middle}
|\partial_v \log(r\O^2)+\f{1}{r^2}(r\partial_v r+M-\f{r}{2})-\f{1}{2r}|(u_{r_0(v)},v)\leq C_{r_0}\cdot C(\Delta_1,D_2,D_3,v_0) (M^{-1}v)^{-2p}.
\end{equation}
By {\color{black} applying} triangle inequalities{\color{black}, it follows that}
\bes
\begin{split}
&|\partial_v \log(r\O^2)(u_{r_0(v)},v)|\\
\leq& |\partial_v \log(r\O^2)+\f{1}{r^2}(r\partial_v r+M-\f{r}{2})-\f{1}{2r}|(u_{r_0(v)},v)+\f{1}{r_0^2}|r \partial_vr-M+\frac{1}{2}r|(u_{r_0(v)},v)+\f{1}{2r_0}.
\end{split}
\ees
Together with Proposition \ref{prop:metricest}{\color{black}, we therefore have that}
$$|r \partial_vr-M+\frac{1}{2}r|(u_{r_0(v)},v)\leq C(\Delta_1,D_2,D_3,v_0) M (M^{-1}v)^{-2p+1}.$$
{\color{black} For $v\geq \t v_1$ with $\t v_1$ large, we conclude that}
$$|\partial_v \log(r\O^2)(u_{r_0(v)},v)|\leq\f{1}{r_0}.$$
\begin{remark}\label{v large}
In the {\color{black} sections below}, when requiring $v$ sufficiently large, we mean $v\geq \max\{v_1, \t v_1\}$ with $v_1$ chosen as in Proposition \ref{prop:lowboundsposr} and $\t v_1$ chosen as above. And we consider the region 
$$|u|\geq |u_1|+2v_{r=0}(u_1), \quad v \mbox{ sufficiently large, } \quad \mbox{and} \quad r(u,v)\leq r_0.$$
\end{remark}

\begin{figure}[H]\label{P8.1}
\begin{center}
\begin{minipage}[!t]{0.4\textwidth}
\begin{tikzpicture}[scale=0.95]
\draw [white](-1, 0)-- node[midway, sloped, above,black]{$\big(u_1, v_{r=0}(u_1)\big)$}(1, 0);
\draw [white](1.5, 0)-- node[midway, sloped, above,black]{$\mathcal{S}$}(1.6, 0);
\draw [white](5.5, 0.2)-- node[midway, sloped, above,black]{$i^+$}(7, 0.2);
\draw (0,0) to [out=-5, in=195] (5.5, 0.5);
\draw [white](0, -4.5)-- node[midway, sloped, above,black]{$v=v_1$}(-4, -4.5);
\draw [white](0, -3)-- node[midway, sloped, above,black]{$\mathcal{H}$}(7, -3);
\draw [white](0, -2.8)-- node[midway, sloped, above, black]{$r=r_0$}(0.5,-2.8);
\draw [white](-2.5, -2.5)-- node[midway, sloped, above, black]{$u=u_1$}(0,0);
\draw [white](2, -0.79)-- node[midway, sloped, above, black]{$(u,v)$}(2.5, -0.79);
\draw [white](2, 0)-- node[midway, sloped, above,black]{$\t q$}(2.2, 0);
\draw[fill] (2.1,-0.05) circle [radius=0.08];
\draw [thick] (-2.5,-2.5)--(0,-5);
\draw [dashed](-2.5, -2.5)--(0,0);
\draw [thick] (5.5, 0.5)--(0,-5);
\draw (-2,-2) to [out=-5, in=215] (5.5, 0.5);
\draw[fill] (0,0) circle [radius=0.08];
\draw[fill] (5.5, 0.5) circle [radius=0.08];
\draw[fill] (-2,-2) circle [radius=0.08];
\draw[fill] (2.65, -1.23) circle [radius=0.08];
\draw[fill] (2.21,-0.79) circle [radius=0.08];
\draw[fill](1.31, -1.69) circle [radius=0.08];
\draw [dashed](1.31, -1.69)--(2.21,-0.79);
\draw [dashed](2.65, -1.23)--(2.21,-0.79);
\end{tikzpicture}
\end{minipage}
\begin{minipage}[!t]{0.5\textwidth}
\end{minipage}
\hspace{0.05\textwidth}
\end{center}
\caption{The domain under consideration near $r=0$.}
\end{figure}
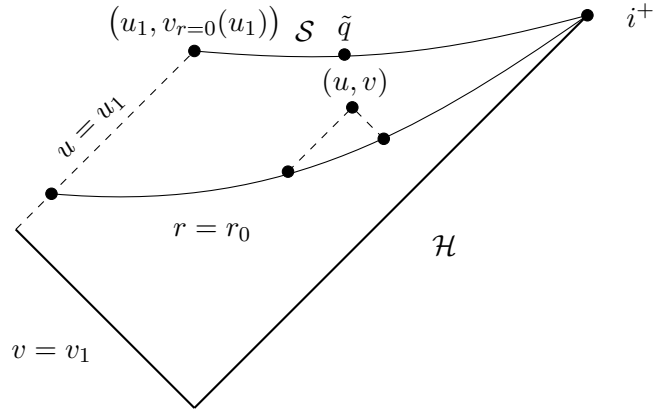

\subsection{Preliminary Estimates for $r\partial_u r$ and $r\partial_v r$}

\noindent In this section, we use {\color{black} a} monotonicity argument to extend the estimates for $r\partial_u r$ and $r\partial_v r$ from Section \ref{Section7} to the region $\{r\leq r_0\}$. 
\begin{proposition}\label{1st partial r}

\noindent For $(u,v)$ with $|u|\geq |u_1|+2v_{r=0}(u_1)$ and $r(u,v)\leq r_0$, it holds {\color{black}that}
\be\begin{split}\label{rvr1}
|(r\partial_v r+M-\f12 r)(u,v)|\leq& C(\Delta_1,D_2,D_3,v_0) (M^{-1}v)^{-2p+1}\cdot M\\
&+[1+C(\Delta_1,D_2,D_3,v_0) (M^{-1}v)^{-2p+1}]\cdot [r_0-r(u,v)]\\
\leq& \epsilon M,
\end{split}\ee

\be\begin{split}\label{rur1}
|(r\partial_u r+M-\f12 r)(u,v)|\leq& C_{\delta,r_0}(C(\Delta_1,D_2,D_3,v_0)+\epsilon \Delta_2)(M^{-1} |u|)^{-2p+1}\cdot M\\
&+[1+C(\Delta_1,D_2,D_3,v_0) (M^{-1}v)^{-2p+1}]\\
&\quad\times [1+C_{\delta,r_0} (C(\Delta_1,D_2,D_3,v_0)+\epsilon \Delta_2)(M^{-1} |u|)^{-2p+1}] \\
&\quad\times [r_0-r(u,v)]\\
\leq& \epsilon M,
\end{split}\ee
with $0<\epsilon\ll 1$.
\end{proposition}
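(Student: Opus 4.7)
The plan is to integrate the propagation equations \eqref{propeq:r1} and \eqref{propeq:r2} outward from the spacelike boundary curve $\{r=r_0\}$ into $\{r\leq r_0\}$, combining the boundary estimates of Section \ref{Section7} with the one-sided control on $-\Omega^{-2}\partial_u r$ and $-\Omega^{-2}\partial_v r$ provided by the Raychaudhuri equations \eqref{conseq:r1}--\eqref{conseq:r2}. The essential trick is to work with the renormalized quantities $r\partial_v r + M-\tfrac{1}{2}r$ and $r\partial_u r+M-\tfrac{1}{2}r$ rather than $r\partial_v r+M$ and $r\partial_u r+M$, so that
\[
\partial_u\bigl(r\partial_v r + M-\tfrac{1}{2}r\bigr) = -\tfrac{1}{4}\Omega^2-\tfrac{1}{2}\partial_u r, \qquad \partial_v\bigl(r\partial_u r+M-\tfrac{1}{2}r\bigr) = -\tfrac{1}{4}\Omega^2-\tfrac{1}{2}\partial_v r,
\]
and both right-hand sides vanish identically when $-\Omega^{-2}\partial_u r=-\Omega^{-2}\partial_v r=\tfrac{1}{2}$ (the Schwarzschild values at the horizon).

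For \eqref{rvr1}, fix $(u,v)$ with $r(u,v)\leq r_0$ and let $u_{r_0}(v)$ be the unique value with $r(u_{r_0}(v),v)=r_0$. Integrating the first identity above from $u_{r_0}(v)$ to $u$ and then changing variables $du'=dr'/\partial_u r$, with $a:=-\Omega^{-2}\partial_u r$, produces
\[
(r\partial_v r + M-\tfrac{1}{2}r)(u,v)-(r\partial_v r + M-\tfrac{1}{2}r)(u_{r_0}(v),v)=\int_{r(u,v)}^{r_0}\left[\tfrac{1}{4a}-\tfrac{1}{2}\right]dr'.
\]
The boundary term is controlled by $C(\Delta_1,D_2,D_3,v_0)\,M(M^{-1}v)^{-2p+1}$ by Proposition \ref{prop:metricest}. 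For the integrand, the Raychaudhuri equation \eqref{conseq:r1} gives $\partial_u a\geq 0$, and Lemma \ref{lm:rdata} gives $a(-\infty,v)=\tfrac{1}{2}+O((M^{-1}v)^{-2p+1})$; hence $a(u,v)\geq\tfrac{1}{2}(1-C(M^{-1}v)^{-2p+1})$ and therefore $\tfrac{1}{4a}\in[0,\tfrac{1}{2}(1+C'(M^{-1}v)^{-2p+1})]$. The integrand thus lies in $[-\tfrac{1}{2},\tfrac{1}{2}C'(M^{-1}v)^{-2p+1}]$, and the integral is bounded in absolute value by $[1+C(M^{-1}v)^{-2p+1}](r_0-r(u,v))$, which yields \eqref{rvr1}; the final $\leq \epsilon M$ bound follows by taking $r_0/M$ and $v^{-1}$ sufficiently small.

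The proof of \eqref{rur1} is the symmetric argument, integrating $\partial_v(r\partial_u r+M-\tfrac{1}{2}r)=-\tfrac{1}{4}\Omega^2-\tfrac{1}{2}\partial_v r$ from $v_{r_0}(u)$ to $v$. The boundary point $(u,v_{r_0}(u))\in\gamma_{r_0}$ sits in the no-shift region $\mathcal{N}_{\delta,r_0}$ for $|u|$ large (guaranteed by the assumption $|u|\geq |u_1|+2v_{r=0}(u_1)$ together with Remark \ref{v large}), so combining Propositions \ref{prop:metricest} and \ref{prop:durdvrr0} (in particular $r\Omega^2\approx 2M-r$) gives the boundary bound $|r\partial_u r+M-\tfrac{1}{2}r_0|(u,v_{r_0}(u))\leq C_{\delta,r_0}(C(\ldots)+\epsilon\Delta_2)M(M^{-1}|u|)^{-2p+1}$ and the baseline value $-\Omega^{-2}\partial_v r(u,v_{r_0}(u))=\tfrac{1}{2}(1+O_{\delta,r_0}((M^{-1}|u|)^{-2p+1}))$. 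The Raychaudhuri equation \eqref{conseq:r2} propagates the lower bound on $-\Omega^{-2}\partial_v r$ to all larger $v$ past the apparent horizon (where $\partial_v r<0$), and the same change-of-variables estimate yields the compound factor $[1+C(M^{-1}v)^{-2p+1}][1+C_{\delta,r_0}(\cdots)(M^{-1}|u|)^{-2p+1}]$ in front of $(r_0-r(u,v))$ claimed in \eqref{rur1}.

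The main obstacle is producing a coefficient on the linear-in-$(r_0-r(u,v))$ error term that is not dramatically larger than $1$: a naive integration of $\partial_u(r\partial_v r)=-\tfrac{1}{4}\Omega^2$ would contribute an $O(r_0)$ term with \emph{no} small prefactor, since we have no direct bound on the size of $\Omega^2$ in $\{r\leq r_0\}$ at this stage. The renormalized identity converts the problematic $\int\tfrac{1}{4}\Omega^2$ into $\int\tfrac{1}{4a}\,dr'$, and the one-sided Raychaudhuri bound $a\geq\tfrac{1}{2}-O((M^{-1}v)^{-2p+1})$ (inherited from the horizon by monotonicity) forces the integrand to be bounded. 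Crucially we only need a \emph{lower} bound on $a$ here; an upper bound would require controlling $Y\phi$ in $\{r\leq r_0\}$, which is not available yet and is the subject of later propositions in this section.
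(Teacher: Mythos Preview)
Your argument is correct and follows essentially the same route as the paper: integrate the renormalized identity $\partial_u(r\partial_v r+M-\tfrac12 r)=-\tfrac14\Omega^2-\tfrac12\partial_u r$ from $\{r=r_0\}$, use the Raychaudhuri monotonicity of $-\Omega^{-2}\partial_u r$ (resp.\ $-\Omega^{-2}\partial_v r$) together with its value at $r=r_0$ from Propositions~\ref{prop:metricest}/\ref{prop:durdvrr0} to control the integral, and read off the boundary term from the same propositions. The only discrepancies are cosmetic: your displayed integral $\int_{r(u,v)}^{r_0}\bigl[\tfrac{1}{4a}-\tfrac12\bigr]\,dr'$ has the wrong sign (it should be $\int_{r(u,v)}^{r_0}\bigl[\tfrac12-\tfrac{1}{4a}\bigr]\,dr'$), and the paper uses the cruder triangle inequality $\bigl|\tfrac{1}{4a}-\tfrac12\bigr|\leq \tfrac{1}{4a}+\tfrac12$ rather than your interval bound, but neither affects the conclusion.
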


\begin{proof}
To estimate $r\partial_v r(u, v)$, {\color{black} we first use \eqref{propeq:r1} to write:}
\bes
\partial_u(r\partial_v r+M-\f12 r)=-\f14\O^2-\f12\partial_u r.
\ees
Integrating both sides with respect to $u$, we have
\be\label{r partial v r}
\begin{split}
&(r\partial_v r+M-\f12 r)(u,v)\\
=&(r\partial_v r+M-\f12 r)(u_{r_0(v)},v)-\int_{u_{r_0(v)}}^u\f{\O^2}{4\partial_u r}\partial_u r d{\color{black}u'}-\f12\int^u_{u_{r_0(v)}}\partial_u r d{\color{black}u'}\\
=&(r\partial_v r+M-\f12 r)(u_{r_0(v)},v)-\int^u_{u_{r_0(v)}}\bigg(\f{\O^2}{4\partial_u r}+\f12\bigg) \partial_u r d{\color{black}u'}.
\end{split}
\ee

\noindent To bound the first term, we use Proposition \ref{prop:metricest}{\color{black}:}
\begin{align*}
\left|r \partial_vr-M+\frac{1}{2}r\right|(u_{r_0(v)},v)\leq&\: C(\Delta_1,D_2,D_3,v_0) M (M^{-1}v)^{-2p+1}.
\end{align*}

\noindent To control term $\f{\O^2}{-\partial_u r}$, we appeal to \eqref{conseq:r1}{\color{black}:}
$$\partial_u\bigg(\f{\partial_u r}{\O^2}\bigg)=-r\f{(\partial_u \phi)^2}{\O^2}.$$
This gives 
$$\partial_u \bigg(\log\f{\O^2}{-\partial_u r}\bigg)=\f{r}{\partial_u r}(\partial_u \phi)^2\leq 0.$$
Hence{\color{black},}
\bes
\begin{split}
\log\f{\O^2(u,v)}{-\partial_u r(u,v)}\leq \log\f{\O^2(u_{r_0(v)},v)}{-\partial_u r(u_{r_0(v)},v)}, \mbox{ and } \f{\O^2(u,v)}{-\partial_u r(u,v)}\leq \f{\O^2(u_{r_0(v)},v)}{-\partial_u r(u_{r_0(v)},v)} .
\end{split}
\ees
Together with Proposition \ref{prop:metricest} {\color{black}, we obtain}
\begin{align*}
\left|- \Omega^{-2}\partial_u r-\frac{1}{2}\right|(u_{r_0(v)},v)\leq&\:C(\Delta_1,D_2,D_3,v_0) (M^{-1}v)^{-2p+1},\\
\end{align*}
we have {\color{black}that}
\bes\begin{split}   
&|\int^u_{u_{r_0(v)}}\bigg(\f{\O^2}{4\partial_u r}+\f12\bigg)(u',v)\cdot \partial_u r(u',v) du'|\\
=&|\int^{r(u,v)}_{r_0}\bigg(\f{\O^2}{4\partial_u r}+\f12\bigg)(u',v) dr(u',v) |\\
\leq&\int_{r(u,v)}^{r_0}\bigg(|\f{\O^2}{4\partial_u r}|(u',v)+\f12\bigg) dr(u',v)\\
\leq&\int_{r(u,v)}^{r_0}\bigg(|\f{\O^2}{4\partial_u r}|(u_{r_0(v)},v)+\f12\bigg) dr(u',v)\\
\leq&|\int_{r(u,v)}^{r_0}[1+C(\Delta_1,D_2,D_3,v_0) (M^{-1}v)^{-2p+1}]\cdot dr(u',v)|\\
\leq&[1+C(\Delta_1,D_2,D_3,v_0) (M^{-1}v)^{-2p+1}]\cdot [r_0-r(u,v)].
\end{split}\ees
Back to (\ref{r partial v r}){\color{black}:} for $r(u,v)\leq r_0$ we hence get
\bes\begin{split}
|(r\partial_v r+M-\f12 r)(u,v)|\leq& C(\Delta_1,D_2,D_3,v_0)M (M^{-1}v)^{-2p+1}\\
&+[1+C(\Delta_1,D_2,D_3,v_0) (M^{-1}v)^{-2p+1}]\cdot [r_0-r(u,v)].
\end{split}\ees
Since $v$ is large and $r_0$ is small, we thus obtain
$$|r\partial_v r(u,v)+M|\leq \e M.$$

We then consider $\partial_v(r\partial_u r+M-\f12 r)$. In the same fashion, with the help of Proposition \ref{prop:durdvrr0}{\color{black}, we obtain}
\begin{align*}
\left|- \partial_u r- \left(Mr^{-1}-\frac{1}{2}\right)\right|(u, v_{r_0(u)})\leq&\: C_{\delta,r_0}(C(\Delta_1,D_2,D_3,v_0)+\epsilon \Delta_2)(M^{-1} |u|)^{-2p+1},\\
\left|\frac{\partial_u r}{\partial_vr}-1\right|(u, v_{r_0(u)})\leq&\: C_{\delta,r_0} (C(\Delta_1,D_2,D_3,v_0)+\epsilon \Delta_2)(M^{-1} |u|)^{-2p+1},
\end{align*}
we derive
\bes\begin{split}
&|(r\partial_u r+M-\f12 r)(u,v)|\leq C_{\delta,r_0}(C(\Delta_1,D_2,D_3,v_0)+\epsilon \Delta_2)(M^{-1} |u|)^{-2p+1}\cdot r_0\\
&+[1+C(\Delta_1,D_2,D_3,v_0) (M^{-1}v)^{-2p+1}]\cdot [1+C_{\delta,r_0} (C(\Delta_1,D_2,D_3,v_0)+\epsilon \Delta_2)(M^{-1} |u|)^{-2p+1}] \cdot [r_0-r(u,v)].
\end{split}\ees
Since $v, |u|$ are large and $r_0$ is small, we thus obtain
$$|r\partial_u r(u,v)+M|\leq \e M.$$
\end{proof}

\begin{remark} 
{\color{black}Along the spacelike singularity $\mathcal{S}$, for each $\t q\in \mathcal{S}$ with coordinate $(u_{\t q}, v_{\t q})$, via the same arguments as in \cite{DC91} by Christodoulou, we have}
$$\lim_{u\rightarrow u_{\t q}} r\partial_v r(u, v_{\t q}) \mbox{ exists, and } \lim_{v\rightarrow v_{\t q}} r\partial_u r(u_{\t q}, v) \mbox{ exists}. $$
Denote
\be\label{f q}
\lim_{u\rightarrow u_{\t q}} r\partial_v r(u, v_{\t q})=-M+f_2(v_{\t q}), \mbox{ and }  \lim_{v\rightarrow v_{\t q}} r\partial_u r(u_{\color{black} \t q}, v)=-M+f_1(u_{\t q}).
\ee
In \cite{AZ}, the first author and Zhang showed that, with $r_0>0$ sufficiently small, for any $(u,v)\in J^-(\t q)$ and $r(u,v)\leq r_0$ it holds {\color{black}that}
\begin{equation}\label{An Zhang r}
\begin{split}
|r\partial_u r(u,v)+M-f_1(u_{\t q})|\leq& M[M^{-1}r(u,v)]^{\f{1}{100}},\\
|r\partial_v r(u,v)+M-f_2(v_{\t q})|\leq& M[M^{-1}r(u,v)]^{\f{1}{100}},
\end{split}
\end{equation}
where $f_1(u)$ and $f_2(v)$ are continuous functions with respect to $u$ and $v$. Together with (\ref{rvr1}) (\ref{rur1}) and the triangle inequalities, we obtain
\be\label{1st f1 f2}
|f_1(u_{\t q})|\leq \e M+M[M^{-1}r_0]^{\f{1}{100}}\leq 2\e M, \quad |f_2(v_{\t q})|\leq \e M+M[M^{-1}r_0]^{\f{1}{100}}\leq 2\e M.
\ee

We will use (\ref{1st f1 f2}) to derive some basic estimates. And in {\color{black}Theorem} \ref{partial r}, we will revisit $r\partial_u r(u_{\t q}, v_{\t q})$ and $r\partial_v r(u_{\t q}, v_{\t q})$ to derive refined estimates. In Proposition \ref{partial r} we will {\color{black}moreover} improve (\ref{1st f1 f2}) and show that, for $(u_{\color{black} \t q},v_{\color{black}\t q})$ with $|u_{\t q}|\geq |u_1|+2v_{r=0}(u_1)$ and $r(u_{\t q},v_{\t q})=0$, the following inequalities hold:
\begin{align*}
|f_1(u_{\t q})|\leq& \f{|u_{\t q}-u_1-v_{r=0}(u_1)|^{-p}M^p\cdot M}{2}\leq |M^{-1}v_{\t q}|^{-p} M,\\
|f_2(v_{\t q})|\leq& \f{|M^{-1}v_{\t q}|^{-p}M}{2}\leq |u-u_1-v_{r=0}(u_1)|^{-p}M^p\cdot M.
\end{align*}
{\color{black}Together with} (\ref{An Zhang r}), the above two inequalities improve (\ref{rvr1}) {\color{black}and} (\ref{rur1}){\color{black}, so we will finally be able to conclude:}
\begin{equation*}
\begin{split}
|r\partial_u r(u,v)+M|\leq&M[M^{-1}r(u,v)]^{\f{1}{100}}+|u_q-u_1-v_{r=0}(u_1)|^{-p}M^p\cdot M,\\
|r\partial_v r(u,v)+M|\leq& M[M^{-1}r(u,v)]^{\f{1}{100}}+|u_q-u_1-v_{r=0}(u_1)|^{-p}M^p\cdot M.
\end{split}
\end{equation*}
\end{remark}

\subsection{Estimates for {\color{black} global coordinates} $u$ and $v$} 

For any $0\leq r_1\leq r_0$, along $r=r_1$, letting $r=r(u, v_{r_1}(u))$ we first prove a useful relation between $v_{r_1}(u)$ and $u$:

\begin{center}
\begin{figure}[h]
\begin{minipage}[!t]{0.4\textwidth}
\begin{tikzpicture}[scale=0.75]
\draw [white](-1, 0)-- node[midway, sloped, above,black]{$\big(u_1, v_{r=0}(u_1)\big)$}(1, 0);
\draw [white](2.9, 0)-- node[midway, sloped, above,black]{$\t q$}(3.1, 0);
\draw [white](1.7, 0)-- node[midway, sloped, above,black]{$\mathcal{S}$}(2, 0);
\draw [white](5.5, 0.2)-- node[midway, sloped, above,black]{$i^+$}(7, 0.2);
\draw (0,0) to [out=-5, in=195] (5.5, 0.5);
\draw [white](0, -3)-- node[midway, sloped, above,black]{$\mathcal{H}$}(7, -3);
\draw [white](0, -4.5)-- node[midway, sloped, above,black]{$v=v_1$}(-4, -4.5);
\draw [white](0, -1.8)-- node[midway, sloped, above, black]{$r=r_1$}(0.5,-1.8);
\draw [white](0, -2.8)-- node[midway, sloped, above, black]{$r=r_0$}(0.5,-2.8);
\draw [white](-2.5, -2.5)-- node[midway, sloped, above, black]{$u=u_1$}(0,0);
\draw [thick] (-2.5,-2.5)--(0,-5);
\draw [dashed](-2.5, -2.5)--(0,0);
\draw [thick] (5.5, 0.5)--(0,-5);
\draw (-1,-1) to [out=-5, in=210] (5.5, 0.5);
\draw (-2,-2) to [out=-5, in=215] (5.5, 0.5);
\draw[fill] (0,0) circle [radius=0.08];
\draw[fill] (5.5, 0.5) circle [radius=0.08];
\draw[fill] (-2,-2) circle [radius=0.08];
\draw[fill] (3,0) circle [radius=0.08];
\draw[fill] (3.68,-0.68) circle [radius=0.08];
\draw[fill](1.31, -1.69) circle [radius=0.08];
\draw [dashed](1.31, -1.69)--(3,0);
\draw [dashed](3.68, -0.68)--(3,0);
\end{tikzpicture}
\end{minipage}
\begin{minipage}[!t]{0.5\textwidth}
\end{minipage}
\hspace{0.05\textwidth}
\caption{A global relation between $v$ and $u$ along $r=r_1$.}. 
\end{figure}
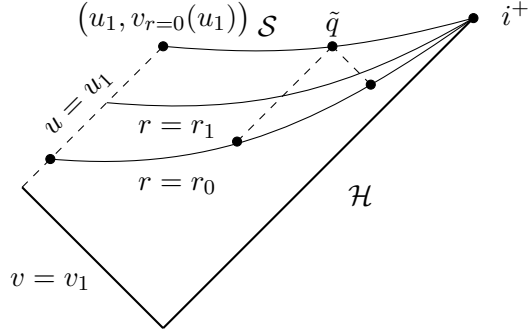
\end{center}

\begin{proposition}\label{v u global} 
Let $0\leq r_1\leq r_0$. Along $r=r_1$, for $|u|\geq |u_1|+2v_{r=0}(u_1)$, it holds {\color{black}that}
\begin{equation*}
\begin{split}
-[u-u_1-v_{r=0}(u_1)](1-6\e)\leq v_{r_1}(u)\leq -[u-u_1-v_{r=0}(u_1)](1+6\e).
\end{split}
\end{equation*}
\end{proposition}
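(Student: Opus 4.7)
The plan is to set up an ODE for $v_{r_1}(u)$ along the spacelike curve $\{r = r_1\}$ and integrate it, with $u=u_1$ as the initial point. Implicit differentiation of the identity $r(u, v_{r_1}(u)) = r_1$ gives
\[
\frac{dv_{r_1}}{du}(u) = -\frac{\partial_u r}{\partial_v r}\bigg|_{(u,\, v_{r_1}(u))}.
\]
By Proposition \ref{1st partial r}, throughout $\{r \leq r_0\}$ we have $r\partial_u r = -M + O(\epsilon M + r_0)$ and $r\partial_v r = -M + O(\epsilon M + r_0)$. Taking the ratio, since both $r\partial_u r$ and $r\partial_v r$ are close to $-M$, for $r_0/M$ sufficiently small compared to $\epsilon$ we obtain
\[
\left|\frac{dv_{r_1}}{du} + 1\right| \leq 3\epsilon \quad \textnormal{along } \{r = r_1\}.
\]

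Next, I determine the initial value $v_{r_1}(u_1)$. Along the ray $u = u_1$, the relation $\partial_v(r^2) = 2r\partial_v r = -2M + O(\epsilon M + r_0)$ integrated from $v=v_{r_1}(u_1)$ up to $v=v_{r=0}(u_1)$, using $r(u_1, v_{r=0}(u_1)) = 0$, yields
\[
r_1^2 = 2M\bigl(v_{r=0}(u_1) - v_{r_1}(u_1)\bigr)\bigl(1 + O(\epsilon + r_0/M)\bigr),
\]
whence $|v_{r_1}(u_1) - v_{r=0}(u_1)| \leq r_0^2/M$ (the subleading corrections being absorbed for $r_0/M$ small).

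I then integrate the slope bound from $u_1$ down to $u$, noting $u \leq u_1$, to get $|v_{r_1}(u) - v_{r_1}(u_1) + (u - u_1)| \leq 3\epsilon(u_1 - u)$. Setting $W := (u_1 - u) + v_{r=0}(u_1) = -[u - u_1 - v_{r=0}(u_1)] > 0$ and combining with the initial-value estimate gives
\[
|v_{r_1}(u) - W| \leq 3\epsilon(u_1 - u) + r_0^2/M.
\]
The hypothesis $|u| \geq |u_1| + 2 v_{r=0}(u_1)$ translates to $u_1 - u \geq 2 v_{r=0}(u_1)$, which forces both $u_1 - u \leq W$ and $W \geq 3 v_{r=0}(u_1) \geq 3 v_1$. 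Thus $3\epsilon(u_1 - u) \leq 3\epsilon W$, while taking $r_0$ small enough that $r_0^2/M \leq 3\epsilon v_1$ ensures $r_0^2/M \leq 3\epsilon W$. Summing, $|v_{r_1}(u) - W| \leq 6\epsilon W$, which is precisely the claimed two-sided bound.

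The only technical point is the simultaneous calibration of $\epsilon$ and $r_0/M$: we need $r_0/M$ small enough both (a) for the ratio $\partial_u r/\partial_v r$ to lie within $3\epsilon$ of $1$, and (b) for the initial-value error $r_0^2/M$ to be dominated by $3\epsilon v_1$. Both smallness conditions are compatible with the assumptions already in force at the start of Section \ref{sec:estnearsing}; no additional obstacle arises.
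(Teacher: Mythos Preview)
Your proof is correct and follows essentially the same route as the paper: differentiate the level-set condition to bound $v_{r_1}'(u)$ via the estimates on $r\partial_u r$ and $r\partial_v r$, estimate the initial value $v_{r_1}(u_1)$ by integrating $\partial_v(r^2)$ along $u=u_1$, and then combine using the hypothesis $|u|\geq |u_1|+2v_{r=0}(u_1)$ to absorb the residual $r_0^2/M$ term. The only cosmetic difference is that the paper cites \eqref{An Zhang r} together with \eqref{1st f1 f2} for the slope bound (incurring the extra $(M^{-1}r_1)^{1/100}$ term), whereas you invoke Proposition~\ref{1st partial r} directly.
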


\begin{proof}
Along $r=r_1$, we have $r_1^2/2=r(u, v_{r_1}(u))^2/2$. Differentiating this equation with respect to $u$, we have
\begin{equation*}
\begin{split}
0=&(r\partial_u r)(u, v_{r_1}(u))+v'_{r_1}(u)\cdot (r\partial_v r)(u, v_{r_1}(u))\\
=&[(r\partial_u r)(u, v_{r_1}(u))+M-f_1(u)]-M+f_1(u)\\
&+v'_{r_1}(u)\cdot [(r\partial_v r)(u, v_{r_1}(u))+M-f_2(v)]+v'_{r_1}(u)\cdot[-M+f_2(v)].
\end{split}
\end{equation*}
Using \eqref{An Zhang r} and \eqref{1st f1 f2} this gives
$$-1-5\e-3(M^{-1}r_1)^{\f{1}{100}}\leq v_{r_1}'(u)\leq -1+5\e+3(M^{-1}r_1)^{\f{1}{100}},$$
and
$$-(u-u_1)[-1-5\e-3(M^{-1}r_1)^{\f{1}{100}}] \leq v_{r_1}(u)-v_{r_1}(u_1)\leq -(u-u_1)[-1+5\e+3(M^{-1}r_1)^{\f{1}{100}}].$$
The above identity is equivalent to
\begin{equation}\label{vr1-v0}
\begin{split}
&-(u-u_1)[-1-5\e-3(M^{-1}r_1)^{\f{1}{100}}]+v_{r_1}(u_1)-v_{r=0}(u_1)\\
\leq& v_{r_1}(u)-v_{r=0}(u_1)\\
&\leq -(u-u_1)[-1+5\e+3(M^{-1}r_1)^{\f{1}{100}}]+v_{r_1}(u_1)-v_{r=0}(u_1).
\end{split}
\end{equation}
Along $u=u_1$, we connect $(u_1, v_{r_1}(u_1))$ to $(u_1, v_{r=0}(u_1))\in \mathcal{S}$ and have
\begin{equation*}
\begin{split}
-r_1^2(u_1, v_{r_1}(u_1))=&r^2(u_1, v_{r=0}(u_1))-r^2_1(u_1, v_{r_1}(u_1))\\
=&\int_{v_{r_1}(u_1)}^{v_{r=0}(u_1)}\partial_v[r^2](u_1, v')dv'\\
=&2\int_{v_{r_1}(u_1)}^{v_{r=0}(u_1)}\{[r\partial_v r(u_1,v')+M-f_2(v')]-M+f_2(v')\} dv'.
\end{split}
\end{equation*}
Thus \eqref{An Zhang r} and \eqref{1st f1 f2} imply $|v_{r_1}(u)-v_{r=0}(u_1)|\leq M^{-1}r_1^2$. Back to \eqref{vr1-v0}, we derive
\begin{equation*}
\begin{split}
&-(u-u_1)[-1-5\e-3(M^{-1}r_1)^{\f{1}{100}}]-M^{-1}r_1^2+v_{r=0}(u_1)\\
\leq& v_{r_1}(u)\\
&\leq -(u-u_1)[-1+5\e+3(M^{-1}r_1)^{\f{1}{100}}]+M^{-1}r_1^2+v_{r=0}(u_1).
\end{split}
\end{equation*}
For $|u|\geq |u_1|+2v_{r=0}(u_1)$ and $v_{r=0}(u_1)$ sufficiently large, we hence prove
$$-[u-u_1-v_{r=0}(u_1)](1-6\e)\leq v_{r_1}(u)\leq -[(u-u_1-v_{r=0}(u_1)](1+6\e).$$

\end{proof}

\subsection{Estimates for $\partial_u \phi(u,v)$ and $\partial_v \phi(u,v)$.}
Let $(u,v)\in J^-(\t q)$. We recall (\ref{f q})
$$\lim_{u\rightarrow u_{\t q}} r\partial_v r(u, v_{\t q})=-M+f_2(v_{\t q}), \mbox{ and }  \lim_{v\rightarrow v_{\t q}} r\partial_u r(u_{\t q}, v)=-M+f_1(u_{\t q}).$$
For further use, we denote $C_1(\t q):=M-f_1(u_{\t q})$ and $C_2(\t q):=M-f_2(v_{\t q})$. And we have

\begin{proposition}\label{phi}
For $(u,v)\in J^-(\t q)$ with $|u|\geq |u_1|+2v_{r=0}(u_1)$ and $r(u,v)\leq r_0$, {\color{black} the following estimates hold:}

\be\label{6.4}
\begin{split}
r^2|\partial_v \phi|(u,v)\leq \f{\tilde{D}_1 \cdot M}{M^{-p}|u-u_1-v_{r=0}(u_1)|^p},\\
r^2|\partial_u \phi|(u,v)\leq \f{\tilde{D}_2 \cdot M}{M^{-p}|u-u_1-v_{r=0}(u_1)|^p}.
\end{split}
\ee
Here $\tilde{D}_1=\tilde{D}_2=8\D_1$ and $\D_1$ is defined in Proposition \ref{prop:baimpphired}. 

\noindent And
\be\label{6.5}
\begin{split}
r^2\partial_v \phi(u,v)\geq&\f{D_1'\cdot M}{M^{-q}|u-u_1-v_{r=0}(u_1)|^q},\\
r^2\partial_u \phi(u,v)\geq&\f{D_2'\cdot M}{M^{-q}|u-u_1-v_{r=0}(u_1)|^q}.
\end{split} 
\ee
Here $D'_1=D'_2=(3-2\sqrt{2})\cdot (D_1-\epsilon)$ and $D_1$ is defined in \eqref{eq:estdataphi}.
\end{proposition}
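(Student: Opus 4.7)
The plan is to adapt the forward/reverse Gr\"onwall arguments from Propositions \ref{prop:baimpphinoshift} and \ref{prop:lowboundsposr}, propagating inward from $\{r = r_0\}$ into the small-$r$ region, and then converting the $v$-weighted bounds on $\{r = r_0\}$ into the $|u - u_1 - v_{r=0}(u_1)|$-weighted bounds of Proposition \ref{phi} by means of Proposition \ref{v u global}.

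Introducing $A := r^2 \partial_v \phi$ and $B := r^2 \partial_u \phi$, the scalar field equations \eqref{propeq:phi1}--\eqref{propeq:phi2} take the form
\begin{align*}
\partial_u A &= \frac{r\partial_u r}{r^2}\,A - \frac{r\partial_v r}{r^2}\,B, \\
\partial_v B &= \frac{r\partial_v r}{r^2}\,B - \frac{r\partial_u r}{r^2}\,A.
\end{align*}
By Proposition \ref{1st partial r}, $r\partial_u r = -M + O(\epsilon M)$ and $r\partial_v r = -M + O(\epsilon M)$, so $\partial_v r / \partial_u r = 1 + O(\epsilon)$. Changing variables from $u$ (resp.\ $v$) to $r$ along $v = \mathrm{const.}$ (resp.\ $u = \mathrm{const.}$), each equation acquires the schematic form $\frac{dA}{dr} = \frac{1}{r}\bigl(A - (1 + O(\epsilon))\,B\bigr)$ (and similarly for $B$), where the crucial structural feature is the near-equality of the two coefficients.

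Following the pattern of Propositions \ref{prop:baimpphinoshift} and \ref{prop:lowboundsposr}, I would define level-set weighted sup/inf quantities
\begin{align*}
\overline{\Phi}(s) &:= \max\Bigl\{\sup_{r = s} (M^{-1}v)^{p}\,M^{-1}|A|,\; \sup_{r = s} (M^{-1}v)^{p}\,M^{-1}|B|\Bigr\}, \\
\underline{\Phi}(s) &:= \min\Bigl\{\inf_{r = s} (M^{-1}v)^{q}\,M^{-1} A,\; \inf_{r = s} (M^{-1}v)^{q}\,M^{-1} B\Bigr\},
\end{align*}
where positivity of $A, B$ near the relevant portion of $\{r = r_0\}$ comes from Proposition \ref{prop:lowboundsposr}. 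The boundary values $\overline{\Phi}(r_0), \underline{\Phi}(r_0)$ are then controlled by the $v$-weighted estimates on $\{r = r_0\}$ supplied by Propositions \ref{prop:baimpphinoshift} and \ref{prop:lowboundsposr}. Integrating the ODEs inward in $r$ and applying the same Gr\"onwall / reverse-Gr\"onwall machinery as in those propositions produces bounds $\overline{\Phi}(r) \leq \overline{\Phi}(r_0)(r_0/r)^{O(\epsilon)}$ and $\underline{\Phi}(r) \geq \underline{\Phi}(r_0)(r/r_0)^{O(\epsilon)}$, which up to bounded multiplicative factors yield $|A|,|B| \lesssim M(M^{-1}v)^{-p}$ and $A, B \gtrsim M(M^{-1}v)^{-q}$. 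Finally, Proposition \ref{v u global} converts $(M^{-1}v)^{-p}$ (respectively $(M^{-1}v)^{-q}$), up to an $O(\epsilon)$ multiplicative loss, into $M^{p}|u - u_1 - v_{r=0}(u_1)|^{-p}$ (resp.\ $M^{q}|u - u_1 - v_{r=0}(u_1)|^{-q}$), delivering \eqref{6.4} and \eqref{6.5}.

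The main obstacle is the singular $\frac{1}{r}$ factor on the right-hand side of the rewritten ODE: if exploited naively, this would yield a blowing-up integrating factor $(r_0/r)^{c}$ with $c$ of order $1$, destroying the bound as $r \downarrow 0$. The rescue is exactly the near-equality of the coefficients of $A$ and $B$: since the $\frac{1}{r}$ term acts on $A - (1 + O(\epsilon))B$ and both $|A|,|B|$ are bounded by the same $\overline{\Phi}$, the effective coefficient in front of $\frac{1}{r}\overline{\Phi}$ is only $O(\epsilon)$, so the Gr\"onwall integrating factor reduces to $(r_0/r)^{O(\epsilon)}$, which is harmless once $r_0$ is small and $v_1, |u_1|$ are large. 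The explicit constants $\tilde D_1 = \tilde D_2 = 8\Delta_1$ and $D_1' = D_2' = (3 - 2\sqrt{2})(D_1 - \epsilon)$ are then obtained by careful bookkeeping of the multiplicative constants inherited from Propositions \ref{prop:baimpphinoshift} and \ref{prop:lowboundsposr} through the two Gr\"onwall steps and the final $v$-to-$u$ conversion.
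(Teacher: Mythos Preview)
Your overall strategy is right and matches the paper's: propagate the $\{r=r_0\}$ bounds inward via a Gr\"onwall/reverse-Gr\"onwall argument on level sets of $r$, then convert $v$-weights into $|u-u_1-v_{r=0}(u_1)|$-weights using Proposition~\ref{v u global}. The constants $\tilde D_i=8\Delta_1$ and $D_i'=(3-2\sqrt 2)(D_1-\epsilon)$ indeed arise exactly from this bookkeeping.

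There is, however, a genuine gap in your handling of the $1/r$ singularity. Your claimed rescue is that ``since both $|A|,|B|\le \overline\Phi$, the effective coefficient in front of $\tfrac{1}{r}\overline\Phi$ is only $O(\epsilon)$.'' This is false: from $|A|,|B|\le \overline\Phi$ you only get $|A-(1+O(\epsilon))B|\le (2+O(\epsilon))\overline\Phi$, not $O(\epsilon)\overline\Phi$. A naive Gr\"onwall on $\overline\Phi=\max\{|A|,|B|\}$ then yields $\overline\Phi(r)\le \overline\Phi(r_0)(r_0/r)^{2+O(\epsilon)}$, which blows up as $r\downarrow 0$ and gives nothing. (At the ODE level your system is $\tfrac{d}{dr}(A+B)\approx 0$, $\tfrac{d}{dr}(A-B)\approx \tfrac{2}{r}(A-B)$; the dynamics are benign, but a sup-norm Gr\"onwall cannot see the conservation of $A+B$.)

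The paper avoids this by running the Gr\"onwall on $r\partial_v\phi$ and $r\partial_u\phi$ rather than on $A=r^2\partial_v\phi,\,B=r^2\partial_u\phi$. From \eqref{propeq:phi1} one has
\[
\partial_u(C_1 r\partial_v\phi)=-\frac{1+h_1}{r}\,(C_2 r\partial_u\phi)\,\partial_u r,\qquad |h_1|\lesssim (M^{-1}r)^{1/100},
\]
with $C_1,C_2$ constants close to $M$ chosen via \eqref{An Zhang r} so that the ratio $\tfrac{C_1}{C_2}\cdot\tfrac{-\partial_vr}{\partial_ur}$ is exactly $-1-h_1$. With $\tilde\Psi(r)=\max\{\sup|C_2 r\partial_u\phi|,\sup|C_1 r\partial_v\phi|\}$, Gr\"onwall gives $\tilde\Psi(\tilde r)\le \tilde\Psi(r_0)\exp\!\big(\int_{\tilde r}^{r_0}\tfrac{1+h}{r}\,dr\big)\le 2\tilde\Psi(r_0)\,r_0/\tilde r$, hence $r^2|\partial\phi|\lesssim r_0\tilde\Psi(r_0)$ stays bounded. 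In your variables this is the same as inserting the integrating factor $1/r$ into $\tfrac{dA}{dr}=\tfrac{1}{r}A-\tfrac{1+h}{r}B$ before estimating, i.e.\ working with $A/r$. Once you make that change, your argument goes through.
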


\begin{proof}
For $(u,v)\in J^-(\t q)$, we treat $C_1(\t q)$ and $C_2(\t q)$ as constants and obtain
\begin{equation*}
\partial_u (C_1 r\partial_v \phi )=\f{C_1}{C_2}\cdot\f{1}{r}\cdot \f{-\partial_v r}{\partial_u r}\cdot C_2\cdot (r\partial_u \phi)\cdot \partial_u r,
\end{equation*}
\begin{equation*}
\partial_v (C_2 r\partial_u \phi )=\f{C_2}{C_1}\f{1}{r}\cdot \f{-\partial_u r}{\partial_v r}\cdot C_1\cdot (r\partial_v \phi)\cdot\partial_v r.
\end{equation*}
The above two equations are equivalent to the following forms: 
\begin{equation}\label{weighted partial v phi}
\partial_u \bigg(C_1 r\partial_v \phi \bigg)=\f{C_1}{C_2}\cdot \f{1}{r}\cdot \f{-\partial_v r}{\partial_u r}\cdot \bigg(C_2 r\partial_u \phi\bigg)\cdot \partial_u r,
\end{equation}
\begin{equation}\label{weighted partial u phi}
\partial_v \bigg(C_2 r\partial_u \phi \bigg)=\f{C_2}{C_1}\cdot\f{1}{r}\cdot \f{-\partial_u r}{\partial_v r}\cdot \bigg(C_1 r\partial_v \phi\bigg)\cdot\partial_v r.
\end{equation}
Note by \eqref{An Zhang r} and \eqref{1st f1 f2}, there exist $h_1(u,v), h_2(u,v)$ satisfying
$$0\leq \f{|h_1(u,v)|}{[M^{-1}r(u,v)]^{\f{1}{100}}}\lesssim 1, \quad 0\leq \f{|h_2(u,v)|}{[M^{-1}r(u,v)]^{\f{1}{100}}}\lesssim 1,$$
and it holds
\bes
\f{C_1}{C_2}\cdot \f{-\partial_v r}{\partial_u r}=-1-h_1, \quad \f{C_2}{C_1}\cdot \f{-\partial_u r}{\partial_v r}=-1-h_2.
\ees
We then rewrite (\ref{weighted partial v phi}), (\ref{weighted partial u phi}) and arrive at
\begin{equation}\label{weighted partial v phi 1}
\partial_u \bigg(C_1 r\partial_v \phi \bigg)=-\f{1+h_1}{r}\cdot \bigg(C_2 r\partial_u \phi\bigg)\cdot \partial_u r,
\end{equation}
\begin{equation}\label{weighted partial u phi 1}
\partial_v \bigg(C_2 r\partial_u \phi \bigg)=-\f{1+h_2}{r}\cdot \bigg(C_1 r\partial_v \phi\bigg)\cdot\partial_v r.
\end{equation}

\noindent We then consider constant $r$-level sets $\{L_r\}$ in $J^-(\t q)$. Let
$$\tilde{\Psi}(r):=\max\{\sup_{P\in L_r} | C_2\cdot r\partial_u \phi|(P), \sup_{Q\in L_r} |C_1\cdot r\partial_v \phi|(Q)\}.$$
In the below, we prove (\ref{6.4}) first.
\begin{center}
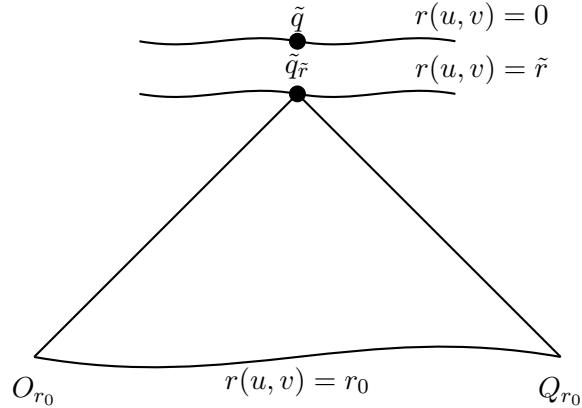
\begin{figure}[H]
\begin{minipage}[!t]{0.4\textwidth}
\begin{tikzpicture}[scale=0.7]
\draw [white](4, 1)-- node[midway, sloped, above,black]{$r(u,v)=0$}(3, 1);
\draw [white](-2, 1)-- node[midway, sloped, above,black]{$\t q$}(2, 1);
\draw [white](-2, 0.1)-- node[midway, sloped, above,black]{$\t q_{\t r}$}(2, 0.1);
\draw [white](-6, -5.2)-- node[midway, sloped, below,black]{$O_{r_0}$}(-4, -5.2);
\draw [white](6, -5.2)-- node[midway, sloped, below,black]{$Q_{r_0}$}(4, -5.2);
\draw [white](4, 0)-- node[midway, sloped, above,black]{$r(u,v)=\t r$}(3, 0);

\draw[thick] (0,0)--(-5,-5);
\draw[thick] (0,0)--(5, -5);
\draw [thick] (-3,0) to [out=-10,in=170] (0,0);
\draw [thick] (0,0) to [out=-10,in=170] (3,0);
\draw [thick] (-3,1) to [out=-10,in=170] (0,1);
\draw [thick] (0,1) to [out=-10,in=170] (3,1);
\draw [white](-5, -5)-- node[midway, sloped, below,black]{$r(u,v)=r_0$}(5, -5);
\draw[thick] (-5, -5) to [out=-10, in=170] (5, -5);
\draw[fill] (0,0) circle [radius=0.15];
\draw[fill] (0,1) circle [radius=0.15];
\end{tikzpicture}
\end{minipage}
\hspace{0.05\textwidth}
\begin{minipage}[!t]{0.6\textwidth}
\end{minipage}
\caption{Estimates for $\partial_u \phi$ and $\partial_v \phi$.}
\end{figure}
\end{center}

\noindent For any $\t q_{\t r}\in J^-(\t q)\cap L_{\t r}$, integrating (\ref{weighted partial v phi 1}), we arrive at
\begin{equation}
\begin{split}
|C_1 r\partial_v \phi|(\t q_n)\leq& \tilde{\Psi}(r_0)+ \int_{u(Q_{r_0})}^{u(\t q_{\t r})} -\f{1+h_1}{r}\cdot |C_2 r\partial_u \phi|\cdot \partial_u r \Bigr\vert_{v=v_{\t q_{\t r}}} \, du\\
=&\tilde{\Psi}(r_0)+\int_{r(Q_{r_0})}^{r(\t q_{\t r})}-\f{1+h_1}{r}\cdot |C_2 r\partial_u \phi| \Bigr\vert_{v=v_{\t q_{\t r}}} \, dr\\
\end{split}
\end{equation}
Similarly, we have
$$|C_2 r\partial_u \phi|(\t q_{\t r})\leq \tilde{\Psi}(r_0)+\int_{r(O_{r_0})}^{r(\t q_{\t r})} -\f{1+h_2}{r}\cdot |C_1 r\partial_v\phi| \Bigr\vert_{u=u_{\t q_{\t r}}} \, dr.$$
Combining these two inequalities together, we have
$$\tilde{\Psi}(\t r)\leq \tilde{\Psi}(r_0)+\int_{r_0}^{\t r} -\f{1+\max\{h_1, h_2\}}{r}\cdot \tilde{\Psi}(r) \, dr.$$
By Gr\"onwall inequality, we have
\begin{align*}
\tilde{\Psi}(\tilde{r})\leq& \tilde{\Psi}(r_0)\times e^{\int_{\tilde{r}}^{r_0} \f{1+\max\{h_1, h_2\}}{r}} dr=\tilde{\Psi}(r_0)\times e^{-\ln \f{\tilde{r}}{r_0}+\int_{\tilde{r}}^{r_0} \f{\max\{h_1, h_2\}}{r} dr}\leq\f{2r_0\tilde{\Psi}(r_0)}{\tilde{r}}. 
\end{align*}
This gives 
$$\tilde{r}\tilde{\Psi}(\tilde{r})\leq 2r_0\tilde{\Psi}(r_0) \mbox{ for any } \tilde{r}>0.$$ 
Hence, for $(u,v)\in J^-(\t q)$ and $r(u,v)\leq r_0$ we have
\be\label{phi middle step}
C_2\cdot r^2|\partial_u \phi|(u,v)\leq 2r_0\tilde{\Psi}(r_0), \quad C_1\cdot r^2|\partial_v \phi|(u,v)\leq 2r_0\tilde{\Psi}(r_0).
\ee
Recall for $P\in J^-(\t q)$, $\tilde{\Psi}(r_0)$ is defined through
$$\tilde{\Psi}(r_0):=\max\{\sup_{r(P)=r_0} |C_2\cdot r\partial_u \phi|(P), \sup_{r(Q)=r_0} |C_1\cdot r\partial_v \phi|(Q)\}.$$ 
To estimate $\tilde{\Psi}(r_0)$ we use {\color{black}Proposition \ref{prop:baimpphired}: for $r(u',v')=r_0$ it holds {\color{black}that} 
\be
r^2|\partial_v\phi|(u_{r_0(v')},v')\leq M{\Delta_1}(M^{-1}v')^{-p}, \quad r^2|\partial_u \phi|(u_{r_0(v')},v')\leq M{\Delta_1}  (M^{-1}v')^{-p}.
\ee}
Using Lemma \ref{lm:relatuvconstr} and Proposition \ref{v u global}, for $(u',v')$ satisfying $|u'|\geq |u_1|+2v_{r=0}(u_1)$ they imply that for $r(u',v')=r_0${\color{black}:} {\color{black}
$$|C_1\cdot r^2\partial_v\phi|(u_{r_0(v')},v')\leq C_1 M{\Delta_1}(M^{-1}v')^{-p}\leq 2C_1\cdot{M\cdot\Delta_1} |u_{r_0(v')}-u_1-v_{r=0}(u_1)|^{-p}M^p,$$
$$|C_2\cdot r^2\partial_u\phi|(u_{r_0(v')},v')\leq C_2M{\Delta_1}(M^{-1}v')^{-p}\leq 2C_2\cdot M\cdot\Delta_1 |u_{r_0(v')}-u_1-v_{r=0}(u_1)|^{-p}M^p.$$}
Note that for $(u_{r_0(v')}, v')\in J^-\big((u,v)\big)${\color{black}, we can estimate}
$$|u_{r_0(v')}-u_1-v_{r=0}(u_1)|^{-p}\leq |u-u_1-v_{r=0}(u_1)|^{-p}.$$
Back to (\ref{phi middle step}){\color{black}:} together with 
$$(1-2\e)M\leq C_1(q)\leq (1+2\e)M, \quad (1-2\e)M\leq C_2(q)\leq (1+2\e)M,$$
we hence prove (\ref{6.4}):
{\color{black}
$$r^2|\partial_v \phi|(u,v)\leq\f{3(C_1+C_2)\cdot M\cdot\Delta_1}{C_1}\cdot \f{1}{|u-u_1-v_{r=0}(u_1)|^p M^{-p}}\leq\f{\tilde{D}_1 \cdot M}{|u-u_1-v_{r=0}(u_1)|^p M^{-p}},$$
$$r^2|\partial_u \phi|(u,v)\leq\f{3(C_1+C_2)\cdot M\cdot\Delta_1}{C_2}\cdot \f{1}{|u-u_1-v_{r=0}(u_1)|^p M^{-p}}=\f{\tilde{D}_2 \cdot M}{|u-u_1-v_{r=0}(u_1)|^p M^{-p}},$$
where we choose $\tilde{D}_1=\tilde{D}_2=8\D_1$. 
}

We then move to prove (\ref{6.5}). Let
$$\Psi(r):=\min\{\inf_{P\in L_r} | C_2\cdot r\partial_u \phi|(P), \inf_{Q\in L_r} |C_1\cdot r\partial_v \phi|(Q)\}.$$
\noindent Integrating (\ref{weighted partial v phi 1}), we arrive at 
\begin{equation}
\begin{split}
C_1 r\partial_v \phi(\t q_{\t r})= &C_1 r\partial_v \phi(Q_{r_0})+\int_{u(Q_{r_0})}^{u(\t q_{\t r})} -\f{1+h_1}{r}\cdot C_2 r\partial_u \phi\cdot \partial_u r \Bigr\vert_{v=v_{\t q_{\t r}}} \, du\\
=&C_1 r\partial_v \phi(Q_{r_0})+\int_{r(Q_{r_0})}^{r(\t q_{\t r})}-\f{1+h_1}{r}\cdot C_2 r\partial_u \phi \Bigr\vert_{v=v_{\t q_{\t r}}} \, dr\\
\end{split}
\end{equation}
Similarly, we have
$$C_2 r\partial_u \phi(\t q_{\t r})= C_2 r\partial_u \phi(O_{r_0})+\int_{r(O_{r_0})}^{\t r} -\f{1+h_2}{r}\cdot C_1 r\partial_v\phi \Bigr\vert_{u=u_{\t q_{\t r}}} \, dr.$$
Combining these two inequalities together, we have
$$\Psi(\t r)\geq \Psi(r_0)+\int_{r_0}^{\t r} -\f{1+\min\{h_1, h_2\}}{r}\cdot \Psi(r) \, dr.$$
By the reverse Gr\"onwall inequality (see Proposition \ref{appendix 1}), we hence obtain
\begin{align*}
\Psi(\tilde{r})\geq& \Psi(r_0)\times e^{\int_{\tilde{r}}^{r_0} \f{1+\min\{h_1,h_2\}}{r}} dr=\Psi(r_0)\times e^{-\ln \f{\tilde{r}}{r_0}+\int_{\tilde{r}}^{r_0} \f{1+\min\{h_1,h_2\}}{r} dr }\geq\f{r_0\Psi(r_0)}{2\tilde{r}}.
\end{align*}
This gives
\bes
\tilde{r}\Psi(\tilde{r})\geq r_0\Psi(r_0)/2 \mbox{ for any } \tilde{r}>0.
\ees
For $(u,v)\in J^-(\t q)$ and $r(u,v)\leq r_0$ we hence have
\be\label{Psi r tilde 2}
r^2|\partial_u \phi|(u,v)\geq r_0\Psi(r_0)/2, \quad r^2|\partial_v \phi|(u,v)\geq r_0\Psi(r_0)/2.
\ee
Recall for $P\in J^-(\t q)$ we define $\Psi(r_0)$ as
$$\Psi(r_0):=\min\{\inf_{r(P)=r_0} C_2\cdot r\partial_u \phi(P), \inf_{r(Q)=r_0} C_1\cdot r\partial_v \phi(Q)\}.$$ 
At the same time, to control $\Psi(r_0)$ we apply Proposition \ref{prop:lowboundsposr}: for $r(u',v')=r_0$ it holds
$$r^2\partial_v\phi(u_{r_0(v')},v')\geq (6-4\sqrt{2})M(D_1-\epsilon) (M^{-1}v')^{-q},$$
$$r^2\partial_u\phi(u_{r_0(v')},v)\geq (6-4\sqrt{2})M(D_1-\epsilon)(M^{-1}v')^{-q}.$$
Using Lemma \ref{lm:relatuvconstr} and Proposition \ref{v u global}, for $(u',v')$ satisfying $u'\geq u_1+2v_0(u_1)$ and $r(u',v')=r_0$, we have 
$$C_1\cdot r^2\partial_v\phi(u_{r_0(v')},v')\geq (6-4\sqrt{2})C_1 M(D_1-\epsilon) (M^{-1}v')^{-q},$$
$$C_2\cdot r^2\partial_v\phi(u_{r_0(v')},v')\geq (6-4\sqrt{2})C_2 M(D_1-\epsilon) (M^{-1}v')^{-q}.$$
Note that for $(u_{r_0(v')}, v')\in J^-\big((u,v)\big)$ it holds $v'\leq v$ and $v'^{-q}\geq v^{-q}$. 
Back to (\ref{Psi r tilde 2}), together with Proposition \ref{v u global} we hence prove (\ref{6.5}){\color{black}:}
\begin{align*}
r^2\partial_v \phi(u,v)\geq&\f{D_1'\cdot M}{M^{-q} |u-u_1-v_{r=0}(u_1)|^q},\\
r^2\partial_u \phi(u,v)\geq&\f{D_2'\cdot M}{M^{-q}|u-u_1-v_{r=0}(u_1)|^q},
\end{align*}
where we choose $D_1'=D_2'=(3-2\sqrt{2})(D_1-\epsilon)$.
\end{proof}

After proving estimates for $\partial_u \phi$ and $\partial_v \phi$, we are ready to control $\O^2(u,v)$. 

\subsection{Estimate for $\O^2(u,v)$} We start from deriving bounds for $\log(r\O^2)$. And we have 
\begin{proposition}\label{Omega 02}
For $(u,v)$, with $|u|\geq |u_1|+2v_{r=0}(u_1)$, $r(u,v)\leq r_0$ and $v$ sufficiently large, we have
\be\label{rOmega2 v}
 \f{{D}'_1 {D}'_2 M}{3 r^2(M^{-1}v)^{2q}}-\f{2}{r(u,v)}\leq -\partial_v\log(r\O^2)(u,v)\leq \f{3\t D_1 \t D_2 M}{r^2(M^{-1}v)^{2p}}+\f{2}{r(u,v)},
\ee
\be\label{rOmega2 u}
\f{{D}'_1 {D}'_2 M}{3r^2(M^{-1}v)^{2q}}-\f{2}{r(u,v)}\leq -\partial_u\log(r\O^2)(u,v)\leq \f{3\t D_1 \t D_2 M}{r^2(M^{-1}v)^{2p}}+\f{2}{r(u,v)},
\ee
where $\t D_1, \t D_2, D_1', D_2'$ are defined in Proposition \ref{phi}.  
\end{proposition}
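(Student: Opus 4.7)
The plan is to exploit the renormalized equation \eqref{propeq:Omegarescale2} for $\log(r^2\Omega^2)$ rather than \eqref{propeq:Omegarescale1} for $\log(r\Omega^2)$, precisely because its right-hand side
\[
\partial_u\partial_v\log(r^2\Omega^2) = -2r^{-2}\partial_u r\,\partial_v r - 2\partial_u\phi\,\partial_v\phi
\]
does not contain $\Omega^2$ itself, so no a priori upper bound on $\Omega^2$ is required. To bound $-\partial_v\log(r\Omega^2)(u,v)$, I will integrate this equation in $u$ at fixed $v$ from $u_{r_0(v)}$ down to $u$, change variables $du' = dr'/\partial_u r$, and then translate back via the identity $\partial_v\log(r\Omega^2)=\partial_v\log(r^2\Omega^2)-\partial_v r/r$. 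The boundary term at $r=r_0$ is controlled by \eqref{additional v}.

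The geometric piece, after the change of variable, is
\[
\int_{u_{r_0(v)}}^u 2r^{-2}\partial_u r\,\partial_v r\, du' = -\int_{r(u,v)}^{r_0} 2r'^{-3}(r'\partial_v r)\, dr'.
\]
Proposition \ref{1st partial r} supplies $r'\partial_v r = -M + O(r_0) + O(M(M^{-1}v)^{-2p+1})$ on the path, and plugging in the leading $-M$ produces $-M/r_0^2 + M/r(u,v)^2$. This cancels \emph{exactly} against the $\partial_v r/r(u,v) \approx -M/r(u,v)^2$ at the target point and against the corresponding $M/r_0^2$ contribution at the boundary (the latter arises because, to extract $\partial_v\log(r\Omega^2)(u_{r_0(v)},v)$ from the boundary value of $\partial_v\log(r^2\Omega^2)$, one must also subtract $\partial_v r/r$ at $r=r_0$). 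The residual errors, originating from the $O(r_0)+O(M(M^{-1}v)^{-2p+1})$ corrections to $r\partial_v r$ and from the boundary value $|\partial_v\log(r\Omega^2)(u_{r_0(v)},v)|\leq 1/r_0$, are all of size $O(1/r)$ and will be absorbed into the $\pm 2/r$ slack.

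The scalar-field integral becomes
\[
\int_{u_{r_0(v)}}^u 2\partial_u\phi\,\partial_v\phi\, du' = \int_{r(u,v)}^{r_0} 2Y\phi\,\partial_v\phi\, dr',
\]
a positive quantity by Proposition \ref{prop:lowboundsposr}. Writing $Y\phi\,\partial_v\phi = (r^2\partial_u\phi)(r^2\partial_v\phi)/[r^3\cdot(-r\partial_u r)]$, using $-r\partial_u r\approx M$ from Proposition \ref{1st partial r}, and invoking Proposition \ref{phi} together with Proposition \ref{v u global} (which lets me replace $|u'-u_1-v_{r=0}(u_1)|$ by $v$ up to $(1\pm 6\epsilon)$ factors along the integration path at fixed $v$), the integrand is pinched between $\tfrac{D_1' D_2' M}{r'^3(M^{-1}v)^{2q}}$ and $\tfrac{\tilde D_1 \tilde D_2 M}{r'^3(M^{-1}v)^{2p}}$ up to $\epsilon$-corrections. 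Since $\int_{r(u,v)}^{r_0}2r'^{-3}\,dr' = 1/r(u,v)^2 - 1/r_0^2$, this produces the leading $M/[r^2(M^{-1}v)^{2p}]$ and $M/[r^2(M^{-1}v)^{2q}]$ terms in the upper and lower bounds respectively; the factor $3$ in $3\tilde D_1\tilde D_2$ and the factor $1/3$ appearing with $D_1'D_2'$ are precisely the buffer needed to swallow the $(1\pm 6\epsilon)$ and other lower-order corrections.

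The estimate on $-\partial_u\log(r\Omega^2)$ follows by the mirror-image argument: integrate \eqref{propeq:Omegarescale2} in $v$ at fixed $u$ from $v_{r_0}(u)$ up to $v$, convert to $r'$ via $dr' = \partial_v r\, dv'$, and use $\partial_u\log(r\Omega^2) = \partial_u\log(r^2\Omega^2) - \partial_u r/r$ together with a $v$-analogue of \eqref{additional v} (readily derived from Proposition \ref{prop:durdvrr0} and the near-Schwarzschild behavior of $r^2\Omega^2/(2Mr-r^2)$ on $\gamma_{r_0}$). The main technical obstacle is the bookkeeping of the leading-order cancellation: one must verify that the two $O(M/r^2)$ contributions (from the geometric integral and from $\partial_v r/r$, or their $u$-analogues) annihilate cleanly modulo $O(1/r)$ errors, and that the compounded $(1\pm 6\epsilon)$ corrections supplied by Proposition \ref{v u global} together with the subleading $O(r_0)$ terms in Proposition \ref{1st partial r} never generate anything worse than $1/r$ as $r\downarrow 0$ — it is this last point that forces the choice of the $\pm 2/r$ slack in the statement.
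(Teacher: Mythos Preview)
Your plan has a genuine gap precisely at the point you flag as ``the main technical obstacle''. The combination you are trying to show is $O(1/r)$, namely
\[
\frac{\partial_v r}{r}(u,v)-\frac{\partial_v r}{r}(u_{r_0(v)},v)+\int_{u_{r_0(v)}}^u 2r^{-2}\partial_u r\,\partial_v r\,du',
\]
is not merely ``approximately zero with small residuals''. Using $\partial_u(r\partial_v r)=-\tfrac14\Omega^2$ one computes $\partial_u(\partial_v r/r)=-\Omega^2/(4r^2)-2r^{-2}\partial_u r\,\partial_v r$, so this combination is \emph{exactly} $-\int_{u_{r_0(v)}}^u \Omega^2/(4r^2)\,du'$. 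In other words, passing from \eqref{propeq:Omegarescale2} for $\log(r^2\Omega^2)$ back to $\log(r\Omega^2)$ reproduces \eqref{propeq:Omegarescale1} identically; the $\Omega^2$--term you hoped to avoid is hiding inside your ``cancellation''.

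If instead you try to estimate each piece separately via $r'\partial_v r=-M+O(r_0)$ (Proposition \ref{1st partial r}) or even $r'\partial_v r=-M+f_2(v)+O(M(M^{-1}r')^{1/100})$ (estimate \eqref{An Zhang r}), the individual errors after dividing by $r'^3$ and integrating are of order $r_0/r^2$ (respectively $M^{1-1/100}r^{-2+1/100}$), which is \emph{not} $O(1/r)$ and cannot be absorbed into the $\pm 2/r$ slack. These errors do not cancel pointwise against one another; their sum is precisely the $\Omega^2$--integral above, which carries genuine information about $\Omega^2$ that no amount of $r\partial_v r$--control can supply.

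The paper closes this by working directly with \eqref{propeq:Omegarescale1} and bounding $\int \Omega^2/(4r^2)\,du'$ via the Raychaudhuri monotonicity $\partial_u\bigl(\Omega^2/(-\partial_u r)\bigr)\le 0$: this gives $\Omega^2/(-\partial_u r)\le \Omega^2/(-\partial_u r)\big|_{r=r_0}\le 3$ by Proposition \ref{prop:metricest}, whence $|\int \Omega^2/(4r^2)\,du'|\le \tfrac34\int_r^{r_0} r'^{-2}\,dr'\le 1/r$. Adding this single ingredient to your argument would make it correct, but at that point it is literally the paper's proof.
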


\begin{center}
\begin{figure}[H]
\begin{minipage}[!t]{0.4\textwidth}
\begin{tikzpicture}[scale=0.95]
\draw [white](-1, 0)-- node[midway, sloped, above,black]{$\big(u_1, v_{r=0}(u_1)\big)$}(1, 0);
\draw [white](1.3, 0)-- node[midway, sloped, above,black]{$\mathcal{S}$}(1.5, 0);
\draw [white](5.5, 0.2)-- node[midway, sloped, above,black]{$i^+$}(7, 0.2);
\draw (0,0) to [out=-5, in=195] (5.5, 0.5);
\draw [white](0, -3)-- node[midway, sloped, above,black]{$\mathcal{H}$}(7, -3);
\draw [white](0, -4.5)-- node[midway, sloped, above,black]{$v=v_1$}(-4, -4.5);
\draw [white](2, 0)-- node[midway, sloped, above,black]{$\t q$}(2.5, 0); 
\draw [white](0, -2.8)-- node[midway, sloped, above, black]{$r=r_0$}(0.5,-2.8);
\draw [white](-2.5, -2.5)-- node[midway, sloped, above, black]{$u=u_1$}(0,0);
\draw [white](2, -0.79)-- node[midway, sloped, above, black]{$(u,v)$}(2.5, -0.79);
\draw [thick] (-2.5,-2.5)--(0,-5);
\draw [dashed](-2.5, -2.5)--(0,0);
\draw [thick] (5.5, 0.5)--(0,-5);
\draw (-2,-2) to [out=-5, in=215] (5.5, 0.5);
\draw[fill] (0,0) circle [radius=0.08];
\draw[fill] (5.5, 0.5) circle [radius=0.08];
\draw[fill] (-2,-2) circle [radius=0.08];
\draw[fill] (2.65, -1.23) circle [radius=0.08];
\draw[fill] (2.21,-0.79) circle [radius=0.08];
\draw[fill] (2.2,-0.05) circle [radius=0.08];
\draw[fill](1.31, -1.69) circle [radius=0.08];
\draw [dashed](1.31, -1.69)--(2.21,-0.79);
\draw [dashed](2.65, -1.23)--(2.21,-0.79);
\end{tikzpicture}
\end{minipage}
\begin{minipage}[!t]{0.5\textwidth}
\end{minipage}
\hspace{0.05\textwidth}
\caption{Estimate of $\O^2(u,v)$ in $J^-(\t q)$.}
\end{figure}
\end{center}

\begin{proof}
Here we will employ {\color{black}the} key equation \eqref{propeq:Omegarescale1}: 
$$-\partial_u \partial_v \log (r\O^2)=-\f{\O^2}{4r^2}+2\partial_u\phi \partial_v \phi.$$
For $(u,v)\in J^-(\t q)$, we integrate the above equation with respect to $u$ and get
$$-\partial_v \log(r\O^2)(u,v)=-\partial_v \log(r\O^2)(u_{r_0}(v),v)+\int_{u_{r_0(v)}}^u\bigg(-\f{\O^2}{4r^2}+2\partial_u \phi\partial_v \phi\bigg)(u',v)du'.$$
By Proposition \ref{phi}, Proposition \ref{v u global} and estimates for $r\partial_u r$ in \eqref{An Zhang r}, it holds {\color{black}that}
\begin{align*}
\int_{u_{r_0(v)}}^u 2|\partial_u \phi\cdot&\partial_v \phi|(u',v) du'\leq \int_{u_{r_0(v)}}^u \f{3\t D_1 \t D_2 M^2 (M^{-1}v)^{-2p}}{r'^4(u',v)}du'\\
=&(M^{-1}v)^{-2p}\int_{r_0}^{r(u,v)} \f{1}{r'^3}\cdot \f{3\t D_1 \t D_2 M^2}{r'\partial_u r}dr' \leq \f{3\t D_1 \t D_2 M^2}{M\cdot r^2 \cdot (M^{-1}v)^{2p}}{\color{black},}
\end{align*}
{\color{black}and also}
\begin{align*}
\int_{u_{r_0(v)}}^u 2\partial_u \phi\cdot\partial_v \phi&(u',v) du'\geq \int_{u_{r_0(v)}}^u \f{{D}'_1{D}'_2 M^2 (M^{-1}v)^{-2q}}{r'^4(u',v)}du'\\
=&(M^{-1}v)^{-2q}\int_{r_0}^{r(u,v)} \f{1}{r'^3}\cdot \f{{D}'_1 {D}'_2 M^2}{r'\partial_u r}dr' \geq \f{{D}'_1 {D}'_2 r_0^2}{3M\cdot r^2 \cdot (M^{-1}v)^{2q}}.
\end{align*}
Recall $\partial_u (\f{\O^2}{-\partial_u r})\leq 0$. Together with {\color{black} the following estimate from} Proposition \ref{prop:metricest} 
$$\left|- \Omega^{-2}\partial_u r-\frac{1}{2}\right|(u,v)\leq C(\Delta_1,D_2,D_3,v_0) (M^{-1}v)^{-2p+1},$$
we have
$$|\int_{u_{r_0(v)}}^u -\f{\O^2}{4r^2} du'|=\int_{u_{r_0(v)}}^u \f{\O^2}{-\partial_u r}\cdot \f{1}{4r^2}\cdot -\partial_u rdu'\leq -\int_{r_0}^r\f{3}{4r^2}dr\leq \f{1}{r}.$$
{\color{black}And {\color{black} for $v$ sufficiently large, see Remark \ref{v large}, we have that} 
\be
|\partial_v \log(r\O^2)(u_{r_0(v)},v)|\leq\f{1}{r_0}.
\ee}
Combining the estimates above, we hence prove (\ref{rOmega2 v})
\bes
 \f{{D}'_1 {D}'_2 M}{3 r^2 (M^{-1}v)^{2q}}-\f{2}{r(u,v)}\leq -\partial_v\log(r\O^2)(u,v)\leq \f{3\t D_1 \t D_2 M}{r^2 (M^{-1}v)^{2p}}+\f{2}{r(u,v)},
\ees
In the same manner, we also obtain (\ref{rOmega2 u}). 
\end{proof}

With Proposition \ref{Omega 02}, we then estimate $r\O^2(u,v)$. 
\begin{proposition}\label{Omega 03} 
For $(u,v)$, with $|u|\geq |u_1|+2v_{r=0}(u_1)$, $r(u,v)\leq r_0$ and $v$ sufficiently large, we have
$$\f{2M-r_0}{4}\cdot\f{r_0^{-{\f{6 \t D_1 \t D_2}{1-\e}\cdot\f{1}{M^{-2p}|u-u_1+v_{r=0}(u_1)|^{2p}}}}}{r(u,v)^{-{\f{6 \t D_1 \t D_2}{1-\e}\cdot\f{1}{M^{-2p}|u-u_1+v_{r=0}(u_1)|^{2p}}}}} \leq r\O^2(u,v)\leq 4\cdot(2M-r_0)\cdot \f{r_0^{-{\f{D_1' D_2'}{6(1+\e)}\cdot\f{1}{M^{-2p}|u-u_1+v_{r=0}(u_1)|^{2q}}}}}{r(u,v)^{-{\f{D_1' D_2'}{6(1+\e)}\cdot\f{1}{M^{-2p}|u-u_1+v_{r=0}(u_1)|^{2q}}}}},$$
where $\t D_1, \t D_2, D_1', D_2'$ are defined in Proposition \ref{phi}.\\
\end{proposition}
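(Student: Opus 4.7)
\bigskip

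\noindent\textbf{Proof plan for Proposition \ref{Omega 03}.}
The plan is to bound $r\Omega^2(u,v)$ by integrating the pointwise bounds on $-\partial_v\log(r\Omega^2)$ from Proposition \ref{Omega 02} along the constant-$u$ curve from the point where $r=r_0$ (at parameter $v_{r_0}(u)$) down to $(u,v)$, and then to control the boundary value $r_0\Omega^2(u,v_{r_0}(u))$ by the Schwarzschild-like estimate \eqref{eq:rescaledOmegaestprop} from Proposition \ref{prop:durdvrr0}, which reads $r^2\Omega^2 \approx 2Mr-r^2$ at $r=r_0$, giving $r_0\Omega^2(u,v_{r_0}(u)) = (2M-r_0)(1+O(\epsilon))$.

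The main computation is to change variables in the integral from $v'$ to $r'$ using $dv' = dr'/\partial_{v'}r'$. By Theorem \ref{partial r} (applied with the preliminary bounds of Proposition \ref{1st partial r}), we have $r'\partial_{v'}r' = -M + O(M[M^{-1}r']^{1/100}) + O(M(M^{-1}v)^{-p})$ uniformly along the curve, so $dv' \approx -r'\,dr'/M$ up to a factor $(1+O(\epsilon))$. Moreover, by Proposition \ref{v u global}, along this constant-$u$ curve the $v$-coordinate is essentially frozen: $v_u(r') = (1\pm 6\epsilon)|u-u_1-v_{r=0}(u_1)|$ uniformly in $r'\in[0,r_0]$. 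This permits pulling the $v$-dependent factor $(M^{-1}v')^{-2q}$ (or $(M^{-1}v')^{-2p}$) out of the $r'$-integral as $(M^{-1}|u-u_1-v_{r=0}(u_1)|)^{-2q}(1\pm O(\epsilon))$. The two main integrals then evaluate as
\begin{equation*}
\int_{v_{r_0}(u)}^{v} \frac{D_1'D_2' M}{3r^2(M^{-1}v')^{2q}}\,dv' \;=\; \frac{D_1'D_2'(1+O(\epsilon))}{3(M^{-1}|u-u_1-v_{r=0}(u_1)|)^{2q}}\,\log\frac{r_0}{r(u,v)},
\end{equation*}
and $\int_{v_{r_0}(u)}^{v}(-2/r)\,dv' = -2(r_0-r(u,v))/M + O(\epsilon)$, which is uniformly bounded by $2r_0/M$, an absolute constant that exponentiates to an $O(1)$ prefactor (absorbed into the crude constants $1/4$ and $4$).

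Combining these, the lower bound on $-\partial_v\log(r\Omega^2)$ in \eqref{rOmega2 v} integrates to
\begin{equation*}
\log(r\Omega^2)(u,v) \;\leq\; \log(r_0\Omega^2)(u,v_{r_0}(u)) + \frac{D_1'D_2'}{6(1+\epsilon)(M^{-1}|u-u_1-v_{r=0}(u_1)|)^{2q}}\log\frac{r(u,v)}{r_0} + \frac{2r_0}{M},
\end{equation*}
where the ratio $3 \mapsto 6(1+\epsilon)$ comes from absorbing the combined error factors from the change of variables and the $v$-freezing (we have slack since $\epsilon$ can be made arbitrarily small). Exponentiating and using $r_0\Omega^2(u,v_{r_0}(u)) \leq (2M-r_0)(1+O(\epsilon))$ together with $e^{2r_0/M} \leq 1 + O(r_0/M)$ yields the stated upper bound with the prefactor $4(2M-r_0)$. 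The lower bound on $r\Omega^2(u,v)$ is obtained symmetrically by integrating the upper bound in \eqref{rOmega2 v}; the factor $3\tilde{D}_1\tilde{D}_2 \mapsto 6\tilde{D}_1\tilde{D}_2/(1-\epsilon)$ arises analogously.

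The main bookkeeping obstacle is to verify that every error factor $(1+O(\epsilon))$ collected from (i) the approximation $r\partial_v r \approx -M$, (ii) the almost-constancy of $v$ along level sets by Proposition \ref{v u global}, (iii) the boundary estimate at $\{r=r_0\}$, and (iv) the subleading $\pm 2/r$ terms, combines in such a way that the final exponent constants $\tfrac{6\tilde{D}_1\tilde{D}_2}{1-\epsilon}$ and $\tfrac{D_1'D_2'}{6(1+\epsilon)}$ are indeed achieved. This is not conceptually hard — $\epsilon$ can be shrunk freely by taking $r_0$ small and $v,|u|$ large (Remark \ref{v large}) — but the arithmetic must be done carefully, which is exactly where the choice of the renormalized quantity $\log(r\Omega^2)$ (rather than $\log\Omega^2$) is essential: it cancels the borderline $\tfrac12 r^{-1}\Omega^2$ contribution in \eqref{propeq:Omega} and leaves behind only the $-\tfrac14 r^{-2}\Omega^2$ and $2\partial_u\phi\partial_v\phi$ sources whose $r$-integrals produce $\log(r/r_0)$ with coefficients under clean control.
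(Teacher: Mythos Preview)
Your proposal is correct and follows essentially the same approach as the paper: integrate the bounds of Proposition \ref{Omega 02} in $v$ along the constant-$u$ segment from $\{r=r_0\}$, change variables via $r\partial_v r\approx -M$ (Proposition \ref{1st partial r}), freeze the $v$-weights using Proposition \ref{v u global}, and exponentiate with the boundary value from \eqref{eq:rescaledOmegaestprop}. Your bookkeeping of how $3\mapsto 6(1+\e)$ and $3\mapsto 6/(1-\e)$ arises from the combined $v$-freezing and change-of-variables errors matches the paper's constants exactly.
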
 

\begin{proof}
Using the above proposition we have
\bes\begin{split}   
&\log(r\O^2)(u,v)=\log(r\O^2)(u,v_{r_0(u)})+\int^v_{v_{r_0(u)}}\partial_v \log(r\O^2)(u,v')dv'\\
\geq&\log(r\O^2)(u,v_{r_0(u)})+\int^v_{v_{r_0(u)}}-\f{3\t D_1 \t D_2 M}{r(u,v)^2}\f{1}{(M^{-1}v)^{2p}}-\f{2}{r(u,v)}dv'\\
\geq&\log(r\O^2)(u,v_{r_0(u)})+\int^v_{v_{r_0(u)}}-\f{6\t D_1 \t D_2 M}{r(u,v)^2}\f{1}{M^{-2p}|u-u_1+v_{r=0}(u_1)|^{2p}}dv'-\f{2r_0}{M(1-\e)}\\
=&\log(r\O^2)(u,v_{r_0(u)})+\int^v_{v_{r_0(u)}}-\f{6\t D_1 \t D_2 M}{r(r\partial_v r)}\f{1}{M^{-2p}|u-u_1+v_{r=0}(u_1)|^{2p}}\cdot \partial_v r dv'-\f{2r_0}{M(1-\e)}\\
\geq&\log(r\O^2)(u,v_{r_0(u)})+\int^{r(u,v)}_{r_0}\f{1}{r}\cdot\f{6\t D_1 \t D_2}{1-\e}\cdot\f{1}{M^{-2p}|u-u_1+v_{r=0}(u_1)|^{2p}}dr-\f{2r_0}{M(1-\e)}\\
=&\log(r\O^2)(u,v_{r_0(u)})+[\ln(\f{r}{r_0})]\cdot\f{6\t D_1 \t D_2}{1-\e}\cdot\f{1}{M^{-2p}|u-u_1+v_{r=0}(u_1)|^{2p}}-\f{2r_0}{M(1-\e)}.
\end{split}\ees
For $r_0$ sufficiently small, this implies 
$$r\O^2(u,v)\geq \f12 (r\O^2)(u,v_{r_0(u)})\cdot [\f{r(u,v)}{r_0}]^{\f{6\t D_1 \t D_2}{1-\e}\cdot\f{1}{M^{-2p}|u-u_1+v_{r=0}(u_1)|^{2p}}}.$$
From \eqref{eq:rescaledOmegaestprop}, we have
$$r\O^2(u,v_{r_0(u)})\geq \f{2M-r_0}{2},$$
which {\color{black}implies}
$$r\O^2(u,v)\geq \f{2M-r_0}{4}\cdot [\f{r(u,v)}{r_0}]^{\f{6\t D_1 \t D_2}{1-\e}\cdot\f{1}{M^{-2p}|u-u_1+v_{r=0}(u_1)|^{2p}}}.$$ 
In the same manner, we also obtain
$$r\O^2(u,v)\leq 4\cdot(2M-r_0)\cdot [\f{r(u,v)}{r_0}]^{\f{D_1' D_2'}{6(1+\e)}\cdot\f{1}{M^{-2q}|u-u_1+v_{r=0}(u_1)|^{2q}}}.$$
This concludes the proof.
\end{proof} 

As a corollary of above estimates, we also {\color{black}establish \underline{\textit{mass inflation}}, which is the blow-up of the Hawking mass $m$, defined in \eqref{Hawking mass}, at $r=0$.}
\begin{proposition}\label{mass inflation}
For $(u,v)$, with $|u|\geq |u_1|+2v_{r=0}(u_1)$, $r(u,v)\leq r_0$ and $v$ sufficiently large, we have
\be
\begin{split}
\f{M}{8}\cdot{\color{black}\Big[}\f{r_0}{r(u,v)}{\color{black}\Big]}^{\f{D_1' D_2'}{6(1+\e)}\cdot\f{M^{2q}}{|u-u_1+v_{r=0}(u_1)|^{2q}}}\leq m(u,v)\leq \f{r(u,v)}{2}+8M\cdot {\color{black}\Big[}\f{r_0}{r(u,v)}{\color{black}\Big]}^{\f{\t D_1 \t D_2}{6(1-\e)}\cdot\f{M^{2p}}{|u-u_1+v_{r=0}(u_1)|^{2p}}}.
\end{split}
\ee
Here $D'_1=D'_2=(3-2\sqrt{2})\cdot (D_1-\epsilon)$ and $D_1$ is defined in \eqref{eq:estdataphi}
\end{proposition}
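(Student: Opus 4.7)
The strategy is to expand the Hawking mass formula \eqref{Hawking mass} as
$$m(u,v) = \frac{r}{2} + \frac{2(r\partial_u r)(r\partial_v r)}{r\Omega^2},$$
and then to substitute the sharp pointwise bounds for the three quantities $r\partial_u r$, $r\partial_v r$, $r\Omega^2$ that have already been established. This is essentially a plug-and-chug step — the real work is in Propositions \ref{1st partial r} and \ref{Omega 03}.

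First I would invoke Proposition \ref{1st partial r} together with the refined identities \eqref{An Zhang r}--\eqref{1st f1 f2} to conclude that in the small-$r$ region considered, both $r\partial_u r$ and $r\partial_v r$ lie in the interval $[-(1+\epsilon)M, -(1-\epsilon)M]$, for any prescribed small $\epsilon > 0$, provided $r_0/M$ is small and $v$ is large. In particular, the product $(r\partial_u r)(r\partial_v r)$ is positive, and it is trapped between $(1-\epsilon)^2 M^2$ and $(1+\epsilon)^2 M^2$.

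Step two treats the upper bound on $m$: using positivity of the product above and the lower bound on $r\Omega^2$ from Proposition \ref{Omega 03} (which grows like $(r/r_0)^{\alpha}$ with $\alpha \propto |u-u_1+v_{r=0}(u_1)|^{-2p}$), I get
$$m(u,v) \leq \frac{r}{2} + \frac{8(1+\epsilon)^2 M^2}{2M-r_0}\left(\frac{r_0}{r(u,v)}\right)^{\alpha},$$
and for $r_0 \ll M$ and $\epsilon$ small the prefactor is bounded by $8M$. For the lower bound, I discard the nonnegative $r/2$ and pair the lower bound on $(r\partial_u r)(r\partial_v r)$ with the upper bound on $r\Omega^2$ from Proposition \ref{Omega 03} (which has the form $(r/r_0)^{\beta}$ with $\beta \propto |u-u_1+v_{r=0}(u_1)|^{-2q}$) to obtain
$$m(u,v) \geq \frac{2(1-\epsilon)^2 M^2}{4(2M-r_0)}\left(\frac{r_0}{r(u,v)}\right)^{\beta} \geq \frac{M}{8}\left(\frac{r_0}{r(u,v)}\right)^{\beta}$$
for $r_0$ and $\epsilon$ chosen appropriately small.

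There is no real obstacle at this stage — the mass-inflation phenomenon is then immediate, since for any fixed $(u,v)$ in the small-$r$ region the exponents $\alpha$, $\beta$ are strictly positive, so $(r_0/r)^{\alpha}, (r_0/r)^{\beta} \to \infty$ as $r \downarrow 0$. The only care required is bookkeeping: keeping track of the various factors of $(1\pm\epsilon)$, verifying the positivity of $(r\partial_u r)(r\partial_v r)$, and matching the constants and exponents to the form stated in the proposition. All other input has been obtained in the preceding sections via the red-shift/no-shift continuity argument, the Gr\"onwall analysis of the scalar field, and the renormalized $\partial_u\partial_v\log(r\Omega^2)$ identity.
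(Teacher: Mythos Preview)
Your proposal is correct and follows essentially the same route as the paper: expand the Hawking mass as $m=\frac{r}{2}+\frac{2(r\partial_u r)(r\partial_v r)}{r\Omega^2}$, insert the two-sided bounds $|r\partial_u r+M|,|r\partial_v r+M|\leq \epsilon M$ from Proposition~\ref{1st partial r}, and pair them with the upper and lower bounds on $r\Omega^2$ from Proposition~\ref{Omega 03}. The paper's proof is just a terser version of exactly this computation, with the same constant-tracking to reach the prefactors $M/8$ and $8M$.
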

\begin{proof}
With Proposition \ref{Omega 03} we have
\begin{equation}\label{mass inflation lower bound}
\begin{split}
m(u,v)\geq&\f{r(u,v)}{2}\bigg(1+\f{2}{4\cdot 2M}\cdot [\f{r_0}{r(u,v)}]^{\f{D_1' D_2'}{6[1+\e]}\cdot\f{1}{M^{-2q}|u-u_1+v_{r=0}(u_1)|^{2q}}}\cdot \f{M^2}{r(u,v)}\bigg)\\
\geq&\f{M}{8}\cdot [\f{r_0}{r(u,v)}]^{\f{D_1' D_2'}{6(1+\e)}\cdot\f{1}{M^{-2q}|u-u_1+v_{r=0}(u_1)|^{2q}}},
\end{split}
\end{equation}
and
\begin{align*}
m(u,v)\leq&\f{r(u,v)}{2}\bigg(1+\f{2\cdot 4\cdot 4}{2M}\cdot [\f{r_0}{r(u,v)}]^{\f{\t D_1 \t D_2'}{6(1-\e)}\cdot\f{1}{M^{-2p}|u-u_1+v_{r=0}(u_1)|^{2p}}}\cdot \f{M^2}{r(u,v)}\bigg)\\
\leq&\f{r(u,v)}{2}+8M\cdot [\f{r_0}{r(u,v)}]^{\f{\t D_1 \t D_2}{6(1-\e)}\cdot\f{1}{M^{-2p}|u-u_1+v_{r=0}(u_1)|^{2p}}}.
\end{align*}
\end{proof}

\section{Estimates for {\color{black}} Kretschmann scalar} {\color{black}In this section, we apply the upper and lower bound estimates from Section \ref{sec:estnearsing} to derive upper and lower bound estimates for the Kretschmann scalar.}
\begin{proposition}\label{thm 8.1} 
For $(u,v)$ with $|u|\geq |u_1|+2v_{r=0}(u_1)$, $r(u,v)\leq r_0$ and $v$ sufficiently large, we have 
$$\f{c(M)}{r^{6+\f{D_1' D_2'}{6(1+\e)}\cdot\f{{\color{black}M^{2q}}}{|u-u_1+v_{r=0}(u_1)|^{2q}}}}\leq R^{\a\b\mu\nu}R_{\a\b\mu\nu}(u,v) \leq \f{C(M)}{r(u,v)^{6+\f{24 \t D_1 \t D_2}{1-\e}\cdot\f{{\color{black}M^{2p}}}{|u-u_1+v_{r=0}(u_1)|^{2p}}}},$$
where $c(M), C(M)$ are constants depending only on $M$ and $\t D_1, \t D_2, D_1', D_2'$ are defined in Proposition \ref{phi}. 
\end{proposition}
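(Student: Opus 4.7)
The plan is to deduce both bounds directly from Christodoulou's pointwise identities relating the Kretschmann scalar to the Hawking mass and to the matter content, feeding in the estimates on $m$ (Proposition \ref{mass inflation}), on $\partial_u\phi,\partial_v\phi$ (Proposition \ref{phi}), and on $r\Omega^2$ (Proposition \ref{Omega 03}) established in the previous sections. For the lower bound I would invoke the inequality $R^{\alpha\beta\mu\nu}R_{\alpha\beta\mu\nu}\geq 32\,m^2/r^6$ already recalled in Section~1, which comes from nonnegativity of the Weyl contribution in spherical symmetry. Substituting the mass-inflation lower bound of Proposition \ref{mass inflation}, squaring, and using $r\leq r_0$ produces a factor of order $r_0^{\alpha_1}/r^{6+\alpha_1}$ with $\alpha_1=\frac{D_1'D_2'}{6(1+\epsilon)}\frac{M^{2q}}{|u-u_1+v_{r=0}(u_1)|^{2q}}$; since $\alpha_1$ is small for $v$ sufficiently large, $r_0^{\alpha_1}$ is bounded below uniformly, giving the claim after renaming constants.

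For the upper bound the mass inequality alone is not enough, and one must use the full Weyl--Ricci decomposition of the Kretschmann scalar. Because in spherical symmetry $W^{\alpha\beta\mu\nu}W_{\alpha\beta\mu\nu}=48m^2/r^6$, and because \eqref{ES} yields $R_{\mu\nu}=2\partial_\mu\phi\partial_\nu\phi$ (so that $R_{\mu\nu}R^{\mu\nu}$ and $R^2$ both reduce to $(\partial^\sigma\phi\partial_\sigma\phi)^2$), one obtains the schematic bound
\[
R^{\alpha\beta\mu\nu}R_{\alpha\beta\mu\nu}\;\lesssim\; \frac{m^2}{r^6}\;+\;\Omega^{-4}(\partial_u\phi)^2(\partial_v\phi)^2.
\]
The first term is bounded by $C(M)\,r^{-6-2\beta}$ via the upper half of Proposition \ref{mass inflation}, with $\beta=\frac{\tilde D_1\tilde D_2}{6(1-\epsilon)}\frac{M^{2p}}{|u-u_1+v_{r=0}(u_1)|^{2p}}$. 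For the second term, the upper bound on $|\partial_u\phi\,\partial_v\phi|$ from Proposition \ref{phi} combined with the lower bound on $r\Omega^2$ from Proposition \ref{Omega 03} (which inverts to $\Omega^{-4}\lesssim M^{-2}r_0^{2A}r^{2-2A}$ with $A=\frac{6\tilde D_1\tilde D_2}{1-\epsilon}\frac{M^{2p}}{|u-u_1+v_{r=0}(u_1)|^{2p}}$) yields after multiplication a bound $C(M)\,r^{-6-2A}$, which dominates the mass contribution since $A\gg\beta$. Since $r\leq r_0<1$, one may crudely replace $r^{-6-2A}$ by $r^{-6-4A}$, absorbing the $r_0^{2A}$ prefactor and matching the stated exponent $6+4A=6+\frac{24\tilde D_1\tilde D_2}{1-\epsilon}\frac{M^{2p}}{|u-u_1+v_{r=0}(u_1)|^{2p}}$.

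The main obstacle is bookkeeping: one has to track the exponents $\alpha_1$, $\beta$, $A$ together with the $v$-weights simultaneously as they are squared and multiplied, and verify at every step that each $r_0^{\text{small}}$ factor can be legitimately absorbed into an $M$-dependent constant, which requires those exponents to remain small -- exactly the role of the ``$v$ sufficiently large'' hypothesis. The conceptual observation behind the enhanced blow-up rate is that the dominant contribution to the upper bound comes from the scalar-field term $\Omega^{-4}(\partial_u\phi)^2(\partial_v\phi)^2$ rather than from $m^2/r^6$: this is the PDE manifestation of mass inflation, and is why the Kretschmann scalar blows up strictly faster than the Schwarzschild rate $r^{-6}$.
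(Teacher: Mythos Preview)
Your lower bound argument is exactly the paper's. For the upper bound, however, the paper takes a substantially different and more computational route: rather than invoking the Weyl--Ricci decomposition, it quotes the full explicit coordinate expression for $R^{\alpha\beta\mu\nu}R_{\alpha\beta\mu\nu}$ from \cite{AZ} (a long sum of terms built out of $\partial_u r$, $\partial_v r$, $\partial_u^2 r$, $\partial_v^2 r$, $\partial_u\partial_v r$, $\partial_u\log\Omega$, $\partial_v\log\Omega$, $\partial_u\partial_v\Omega$, weighted by powers of $r$ and $\Omega$), bounds each term using the pointwise estimates $\Omega^{-2}\lesssim r^{1-A}$ together with the \cite{AZ} bounds $|\partial_u\log\Omega|,|\partial_v\log\Omega|\lesssim r^{-2}$, $|\partial_u\partial_v r|,|\partial_u^2 r|,|\partial_v^2 r|\lesssim r^{-3}$, $|\Omega\,\partial_u\partial_v\Omega|\lesssim r^{-5}$, and identifies the worst term as the one carrying $\Omega^{-8}(\Omega\,\partial_u\partial_v\Omega)^2\lesssim r^{4-4A}\cdot r^{-10}=r^{-6-4A}$; this is where the exponent $6+\tfrac{24\tilde D_1\tilde D_2}{1-\epsilon}(\cdot)$ in the statement originates. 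Your argument is cleaner and in fact sharper: the identity $R^{\alpha\beta\mu\nu}R_{\alpha\beta\mu\nu}=48m^2/r^6+\tfrac{320}{3}\,\Omega^{-4}(\partial_u\phi)^2(\partial_v\phi)^2$ is exact in this setting, so only the $\Omega^{-4}$ power enters and you actually obtain the exponent $6+2A$ before deliberately weakening to $6+4A$ to match the stated result. The price the paper pays is the need to import the second-derivative estimates on $r$ and $\Omega$ from \cite{AZ}; the benefit of your approach is that it bypasses those entirely, using only Propositions~\ref{phi}, \ref{Omega 03} and \ref{mass inflation}.
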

 
\begin{proof}  

We start from deriving the lower bound. For Kretschmann scalar, in \cite{DC91} we have 
$$R^{\a\b\mu\nu}R_{\a\b\mu\nu}(u,v)\geq \f{32 m(u,v)^2}{r(u,v)^6}.$$
Via \eqref{mass inflation lower bound}, it holds
\bes\begin{split}   
m(u,v)\geq&\f{M}{8}\cdot [\f{r_0}{r}]^{\f{D_1' D_2'}{6(1+\e)}\cdot\f{1}{M^{-2q}|u-u_1+v_{r=0}(u_1)|^{2q}}}.
\end{split}\ees
Therefore, we obtain
\begin{align*}
R^{\a\b\mu\nu}R_{\a\b\mu\nu}(u,v)\geq& \f{\big( 4M\cdot r_0^{\f{D_1' D_2'}{6(1+\e)}\cdot\f{1}{M^{-2q}|u-u_1+v_{r=0}(u_1)|^{2q}}}\big)^2}{{}{r^{6+\f{D_1' D_2'}{6(1+\e)}\cdot\f{1}{M^{-2q}|u-u_1+v_{r=0}(u_1)|^{2q}}}}}\geq \f{c(M)}{r^{6+\f{D_1' D_2'}{6(1+\e)}\cdot\f{1}{M^{-2q}|u-u_1+v_{r=0}(u_1)|^{2q}}}}.
\end{align*}
Note that for $|u|$ large, $c(M)$ is a constant depending only on $M$.

We then move to derive the sharp upper bound of $R^{\a\b\mu\nu}R_{\a\b\mu\nu}$. In \cite{AZ},  the following expression for the Kretschmann scalar is derived:
\begin{equation*}
\begin{split}
&R^{\alpha\beta {\color{black} \mu \nu}}R_{\alpha\beta {\color{black} \mu \nu}}\\
=&\f{4}{r^4 \O^8}\bigg(16\cdot(\f{\partial^2 r}{\partial u \partial v})^2\cdot r^2\cdot\O^4+16\cdot \f{\partial^2 r}{\partial u^2}\cdot\f{\partial^2 r}{\partial v^2} \cdot r^2\cdot\O^4 \bigg)\\
&+\f{4}{r^4 \O^8}\bigg(-32\cdot \f{\partial^2 r}{\partial u^2}\cdot\partial_v r \cdot r^2 \cdot \O^4\cdot \partial_v \log\O -32\cdot\f{\partial^2 r}{\partial v^2}\cdot r^2\cdot\partial_u r \cdot \O^4\cdot \partial_u \log\O \bigg)\\
&+\f{4}{r^4 \O^8}\bigg( 16\cdot (\partial_v r)^2\cdot (\partial_u r)^2\cdot \O^4+64\cdot \partial_v r\cdot r^2\cdot \partial_u r\cdot \O^4\cdot \partial_u \log\O\cdot \partial_v\log\O+8\cdot\partial_v r\cdot \partial_u r\cdot \O^6\bigg)\\
&+\f{4}{r^4 \O^8}\bigg(16\cdot r^4\cdot (\f{\partial^2 \O}{\partial v \partial u})^2\cdot \O^2-32\cdot r^4\cdot \f{\partial^2 \O}{\partial v \partial u}\cdot \O^3 \cdot \partial_v \log\O \cdot \partial_u \log\O \bigg)\\
&+\f{4}{r^4 \O^8}\bigg( 16\cdot r^4\cdot \O^4\cdot (\partial_v \log\O)^2\cdot (\partial_u \log\O)^2+\O^8 \bigg)\\
=&\f{4}{r^2 \O^4}\bigg(16\cdot(\f{\partial^2 r}{\partial u \partial v})^2+16\cdot \f{\partial^2 r}{\partial u^2}\cdot\f{\partial^2 r}{\partial v^2} \bigg)\\
&+\f{4}{r^2 \O^4}\bigg(-32\cdot \f{\partial^2 r}{\partial u^2}\cdot\partial_v r  \cdot \partial_v \log\O -32\cdot\f{\partial^2 r}{\partial v^2}\cdot\partial_u r \cdot \partial_u \log\O \bigg)\\
&+\f{4}{r^4 \O^4}\bigg( 16\cdot (\partial_v r)^2\cdot (\partial_u r)^2+64\cdot \partial_v r\cdot r^2\cdot \partial_u r\cdot \partial_u \log\O\cdot \partial_v\log\O\bigg)\\
\end{split}
\end{equation*}
\begin{equation}\label{Kretschmann2}
\begin{split}
&+\f{32}{r^4 \O^2}\cdot\partial_v r\cdot \partial_u r+\f{4}{\O^8}\bigg(16\cdot (\f{\partial^2 \O}{\partial v \partial u})^2\cdot \O^2-32\cdot \f{\partial^2 \O}{\partial v \partial u}\cdot \O \cdot \O^2 \cdot \partial_v \log\O \cdot \partial_u \log\O \bigg)\\
&+\f{64}{\O^4}\cdot (\partial_v \log\O)^2\cdot (\partial_u \log\O)^2+\f{4}{r^4}.
\end{split}
\end{equation}
And for $\partial_v \partial_u \O$ we have
\begin{equation}\label{Omega 3}
\O\cdot\partial_v \partial_u \O=\O^2\cdot\partial_v \log\O\cdot \partial_u\log\O+\f12 \O^2\cdot\partial_v\partial_u \log\O^2.
\end{equation}
Allow $\lesssim$ to mean $\leq$ up to constants only depending on $M$. Applying the estimates above we have
\begin{equation*}
\begin{split}
\O^{-2}(u,v)\leq& \f{4 r_0^{\f{6\t D_1 \t D_2}{1-\e}\cdot\f{1}{M^{-2p}|u-u_1+v_{r=0}(u_1)|^{2p}}}}{2M-r_0}\cdot\f{r(u,v)}{r(u,v)^{\f{6\t D_1 \t D_2}{1-\e}\cdot\f{1}{M^{-2p}|u-u_1+v_{r=0}(u_1)|^{2p}}}}\\
\lesssim& \f{r(u,v)}{r(u,v)^{\f{6 \t D_1 \t D_2}{1-\e}\cdot\f{1}{M^{-2p}|u-u_1+v_{r=0}(u_1)|^{2p}}}}.
\end{split}
\end{equation*}
{\color{black}Note that for $|u|$ large, the constant omitted in the second inequality depends only on $M$.}

\noindent Applying the estimates in \cite{AZ}, it also holds that
$$\O^2(u,v)\lesssim \f{1}{r(u,v)}, \quad |\partial_v\log\O(u,v)|\lesssim \f{1}{r(u,v)^2}, \quad |\partial_u \log\O(u,v)|\lesssim \f{1}{r(u,v)^2},$$
$$|\partial_u\partial_v \log\O(u,v)|\lesssim \f{1}{r(u,v)^4}, \quad |\partial_u \O^2(u,v)|=|\O^2\cdot\partial_u \log\O^2(u,v)|\lesssim \f{1}{r(u,v)^3},$$
$$|\O\cdot\partial_v \partial_u \O(u,v)|\lesssim \f{1}{r(u,v)^5}, \quad |\partial_u r(u,v)|\lesssim \f{1}{r(u,v)}, \quad |\partial_v r(u,v)|\lesssim \f{1}{r(u,v)},$$
$$|\partial_u \partial_v r(u,v)|\lesssim \f{1}{r(u,v)^3}, \quad |\partial_u \partial_u r(u,v)|\lesssim \f{1}{r(u,v)^3}, \quad |\partial_v \partial_v r(u,v)|\lesssim \f{1}{r(u,v)^3}.$$
Therefore, we have
\begin{equation*}
\begin{split}
&R^{\alpha\beta\rho\sigma}R_{\alpha\beta\rho\sigma}\\
=&\f{4}{r^2 \O^4}\bigg(16\cdot(\f{\partial^2 r}{\partial u \partial v})^2+16\cdot \f{\partial^2 r}{\partial u^2}\cdot\f{\partial^2 r}{\partial v^2} \bigg)\\
&+\f{4}{r^2 \O^4}\bigg(-32\cdot \f{\partial^2 r}{\partial u^2}\cdot\partial_v r  \cdot \partial_v \log\O -32\cdot\f{\partial^2 r}{\partial v^2}\cdot\partial_u r \cdot \partial_u \log\O \bigg)\\
&+\f{4}{r^4 \O^4}\bigg( 16\cdot (\partial_v r)^2\cdot (\partial_u r)^2+64\cdot \partial_v r\cdot r^2\cdot \partial_u r\cdot \partial_u \log\O\cdot \partial_v\log\O\bigg)\\
&+\f{32}{r^4 \O^2}\cdot\partial_v r\cdot \partial_u r+\f{4}{\O^8}\bigg(16\cdot (\f{\partial^2 \O}{\partial v \partial u})^2\cdot \O^2-32\cdot \f{\partial^2 \O}{\partial v \partial u}\cdot \O \cdot \O^2 \cdot \partial_v \log\O \cdot \partial_u \log\O \bigg)\\
&+\f{64}{\O^4}\cdot (\partial_v \log\O)^2\cdot (\partial_u \log\O)^2+\f{4}{r^4}\\
\lesssim&\f{1}{r(u,v)^{\f{12 \t D_1 \t D_2}{1-\e}\cdot\f{1}{M^{-2p}|u-u_1+v_{r=0}(u_1)|^{2p}}}}\cdot\f{1}{r(u,v)^6}+\f{1}{r(u,v)^{\f{12\t D_1 \t D_2}{1-\e}\cdot\f{1}{M^{-2p}|u-u_1+v_{r=0}(u_1)|^{2p}}}}\cdot\f{1}{r(u,v)^4}\\ 
&+\f{1}{r(u,v)^{\f{24\t D_1 \t D_2}{1-\e}\cdot\f{1}{M^{-2p}|u-u_1+v_{r=0}(u_1)|^{2p}}}}\cdot\f{1}{r(u,v)^6}+\f{1}{r(u,v)^4}\\
\lesssim&\f{1}{r(u,v)^{6+\f{24 \t D_1 \t D_2}{1-\e}\cdot\f{1}{M^{-2p}|u-u_1+v_{r=0}(u_1)|^{2p}}}}.
\end{split}
\end{equation*}
This concludes the proof of Proposition \ref{thm 8.1}.  
\end{proof}

\section{Refined Estimates for $r\partial_v r(u,v)$ and $r\partial_u r(u,v)$}
Along $\mathcal{S}$, now we derive the asymptotic behaviours for $r\partial_u r(u,v)$ and $r\partial_v r(u,v)$:
\begin{theorem}\label{partial r}
For $(u,v)\in J^-(\t q)$ with $|u|\geq |u_1|+2v_{r=0}(u_1)$, $r(u,v)\leq r_0$ and {\color{black}$v$ sufficiently large such that (\ref{additional v}) holds}, we have
\begin{equation}\label{d 1.11}
|r\partial_u r(u,v)+M-f_1(u_{\t q})|\leq M[M^{-1}r(u,v)]^{\f{1}{100}},
\end{equation}
\begin{equation}\label{d 1.12}
|r\partial_v r(u,v)+M-f_2(v_{\t q})|\leq M[M^{-1}r(u,v)]^{\f{1}{100}},
\end{equation}
where   
\begin{align*}
|f_1(u_{\t q})|\leq& \f{M|M^{-1}[u_{\t q}-u_1-v_{r=0}(u_1)]|^{-p}}{2}\leq M(M^{-1}v_{\t q})^{-p},\\ 
|f_2(v_{\t q})|\leq& \f{M(M^{-1}v_{\t q})^{-p}}{2}  \leq M|M^{-1}[u-u_1-v_{r=0}(u_1)]|^{-p}.
\end{align*}
\end{theorem}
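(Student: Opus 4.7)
The plan is to exploit the cancellation exhibited by the \emph{renormalized} propagation equation
\begin{equation*}
\partial_u(r\partial_v r + M - r/2) = -\f14 \O^2 - \f12 \partial_u r = \f12 \delta_u\,\O^2, \qquad \delta_u := \f{-\partial_u r}{\O^2} - \f12,
\end{equation*}
rather than the naive $\partial_u(r\partial_v r + M) = -\f14 \O^2$. Both terms on the left are individually of size $\sim M/r$ near $\mathcal{S}$, yet their sum is governed by the smallness of $\delta_u$, which measures the deviation of $-\partial_u r/\O^2$ from its Schwarzschild value $\f12$. Integrating this identity along $v = v_{\t q}$ from $\{r = r_0\}$ to $\mathcal{S}$ (where $r\to 0$) and changing variable via $du = dr/\partial_u r$, one obtains
\begin{equation*}
f_2(v_{\t q}) = (r\partial_v r + M - r/2)(u_{r_0(v_{\t q})}, v_{\t q}) + \int_0^{r_0} \f{\O^2 \delta_u}{-2\partial_u r}\Big|_{v=v_{\t q}}\, dr,
\end{equation*}
since the $-r/2$ term disappears in the limit $r \to 0$. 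The boundary contribution is bounded by $CM(M^{-1}v_{\t q})^{-2p+1}$ using Proposition \ref{prop:metricest}.

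The crux is a uniform bound on $|\delta_u|$ throughout $0 < r \leq r_0$. At $r = r_0$ one has $|\delta_u| \leq C(M^{-1}v_{\t q})^{-2p+1}$ from Proposition \ref{prop:metricest}. To propagate this into the singular region, we use the Raychaudhuri-type identity
\begin{equation*}
\partial_u \log \f{\O^2}{-\partial_u r} = \f{r (\partial_u \phi)^2}{\partial_u r} \leq 0,
\end{equation*}
which makes $\O^2/(-\partial_u r)$ monotonically decreasing in $u$. Changing variable to $r$ and applying the upper bound $r^2|\partial_u\phi|\lesssim M^{p+1}|u-u_1-v_{r=0}(u_1)|^{-p}$ from Proposition \ref{phi} together with $|r\partial_u r|\approx M$ from \eqref{An Zhang r}, the $u$-variation of $\log(\O^2/(-\partial_u r))$ is bounded by $C\log(r_0/r)\cdot (M^{-1}|u_{\t q}|)^{-2p}$. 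Hence
\begin{equation*}
|\delta_u(u,v_{\t q})| \leq C(M^{-1}v_{\t q})^{-2p+1} + C\log(r_0/r)\,(M^{-1}|u_{\t q}|)^{-2p}.
\end{equation*}
Plugging into the integral, using $\int_0^{r_0}\log(r_0/r)\,dr = r_0$, and invoking Proposition \ref{v u global} to identify $|u_{\t q}|$ with $v_{\t q}$ up to a factor close to $1$, we arrive at $|f_2(v_{\t q})| \leq CM(M^{-1}v_{\t q})^{-2p+1}$. Since $p > 1$ forces $-2p+1 < -p$, the right-hand side is bounded by $\f{M}{2}(M^{-1}v_{\t q})^{-p}$ for sufficiently large $v_{\t q}$; the alternative form $M|M^{-1}(u-u_1-v_{r=0}(u_1))|^{-p}$ then follows from Proposition \ref{v u global}.

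The estimate for $f_1$ proceeds by the symmetric argument applied to
\begin{equation*}
\partial_v(r\partial_u r + M - r/2) = \f12 \delta_v\,\O^2, \qquad \delta_v := \f{-\partial_v r}{\O^2} - \f12,
\end{equation*}
integrated along $u = u_{\t q}$ from $\{r = r_0\}$ to $\mathcal{S}$, where the analogous monotonicity $\partial_v\log(\O^2/(-\partial_v r))\leq 0$ follows from the Raychaudhuri equation \eqref{conseq:r2} (with $\partial_v r < 0$ in the trapped region) and Proposition \ref{phi} provides the decay of $r^2|\partial_v\phi|$. The pointwise conclusions \eqref{d 1.11}--\eqref{d 1.12} are then immediate from the bounds on $f_1, f_2$ combined with the local An--Zhang estimates \eqref{An Zhang r}. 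The principal technical obstacle to watch out for is the logarithmic loss $\log(r_0/r)$ in the monotonicity estimate for $\delta_u$, which at first sight could degrade the decay rate; this is circumvented because $\log(r_0/r)$ is integrable against $dr$ near $r=0$, so the $(M^{-1}v_{\t q})^{-2p+1}$ boundary contribution dominates and the final $(M^{-1}v_{\t q})^{-p}$ rate is preserved with room to spare.
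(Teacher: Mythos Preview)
Your proposal follows the paper's proof closely: the same renormalized identity $\partial_u(r\partial_v r+M-\tfrac{r}{2})=-\tfrac14\O^2-\tfrac12\partial_u r$, the same Raychaudhuri control of $\O^2/(-\partial_u r)$, the same change of variable to $r$, and the symmetric treatment of $f_1$. The one step to tighten is the pointwise bound
\[
|\delta_u(u,v_{\t q})|\le C(M^{-1}v_{\t q})^{-2p+1}+C\log(r_0/r)\,(M^{-1}|u_{\t q}|)^{-2p},
\]
which tacitly linearizes $e^{E}-1\approx E$; but the log-variation $E=C\log(r_0/r)(M^{-1}v)^{-2p}$ is unbounded as $r\to 0$, so the inequality is not literally valid for all $r>0$. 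The paper avoids this by keeping the exact form $\O^2/(-\partial_u r)=\text{(boundary value)}\cdot (r/r_0)^{\alpha}$ with $|\alpha|\lesssim (M^{-1}v)^{-2p}$ and computing $\int_0^{r_0}\big[(r_0/r)^{\alpha}-1\big]\,dr=\frac{r_0\alpha}{1-\alpha}$ directly. Your argument is easily repaired in the same spirit: split the $dr$-integral at $r_*=r_0 e^{-1/\alpha}$, use your linearized bound on $[r_*,r_0]$, and observe that the tail over $[0,r_*]$ contributes at most $\int_0^{r_*}(r_0/r)^{\alpha}\,dr=\frac{r_0}{1-\alpha}e^{-(1-\alpha)/\alpha}$, which is negligible.
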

\begin{remark}
With this theorem, along $\mathcal{S}$ where $r(u,v)=0$, as $|u_{\t q}|\rightarrow +\infty$ we have
$$r\partial_u r(u_{\t q}, v_{r=0}(u_{\t q}))\rightarrow -M, \mbox{ and } r\partial_u r(u_{\t q}, v_{r=0}(u_{\t q}))\rightarrow -M$$
with an inverse polynomial rate $|u_{\t q}-u_1-v_{r=0}(u_1)|^{-p}$. \\
\end{remark}

\noindent We proceed to prove Theorem \ref{partial r}. 

\begin{proof}
We first use {\color{black}Proposition 5.1} in \cite{AZ}: 

\begin{center}
\begin{figure}[H]
\begin{minipage}[!t]{0.4\textwidth}
\begin{tikzpicture}[scale=0.95]
\draw [white](-1, 0)-- node[midway, sloped, above,black]{$\big(u_1, v_{r=0}(u_1)\big)$}(1, 0);
\draw [white](2.9, 0)-- node[midway, sloped, above,black]{$\t q$}(3.1, 0);
\draw [white](1.7, 0)-- node[midway, sloped, above,black]{$\mathcal{S}$}(2, 0);
\draw [white](5.5, 0.2)-- node[midway, sloped, above,black]{$i^+$}(7, 0.2);
\draw (0,0) to [out=-5, in=195] (5.5, 0.5);
\draw [white](0, -3)-- node[midway, sloped, above,black]{$\mathcal{H}$}(7, -3);
\draw [white](0, -4.5)-- node[midway, sloped, above,black]{$v=v_1$}(-4, -4.5);
\draw [white](0, -2.8)-- node[midway, sloped, above, black]{$r=r_0$}(0.5,-2.8);
\draw [white](-2.5, -2.5)-- node[midway, sloped, above, black]{$u=u_1$}(0,0);
\draw [white](3.58, -0.70)-- node[midway, sloped, below, black]{$\t q_2$}(3.78, -0.70);
\draw [white](1.29, -1.79)-- node[midway, sloped, below, black]{$\t q_1$}(1.38, -1.79);
\draw [thick] (-2.5,-2.5)--(0,-5);
\draw [dashed](-2.5, -2.5)--(0,0);
\draw [thick] (5.5, 0.5)--(0,-5);
\draw (-2,-2) to [out=-5, in=215] (5.5, 0.5);
\draw[fill] (0,0) circle [radius=0.08];
\draw[fill] (5.5, 0.5) circle [radius=0.08];
\draw[fill] (-2,-2) circle [radius=0.08];
\draw[fill] (3.68,-0.68) circle [radius=0.08];
\draw[fill](1.29, -1.71) circle [radius=0.08];
\fill[black!30!white] (2.47, -0.53)--(3,0)--(3.53, -0.53)--(3,-1.06);
\draw [thin](1.29, -1.71)--(3,0);
\draw[fill] (3,0) circle [radius=0.08];
\draw [thin](2.47, -0.53)--(3,-1.06);
\draw [thin](3.53, -0.53)--(3,-1.06);
\draw [thin](3.68, -0.68)--(3,0);
\draw[fill] (3, -1.06) circle [radius=0.08]; 

\end{tikzpicture}
\end{minipage}
\begin{minipage}[!t]{0.5\textwidth}
\end{minipage}
\hspace{0.05\textwidth}
\caption{Asymptotic behaviours for $r\partial_u r$ and $r\partial_v r$.}
\end{figure}
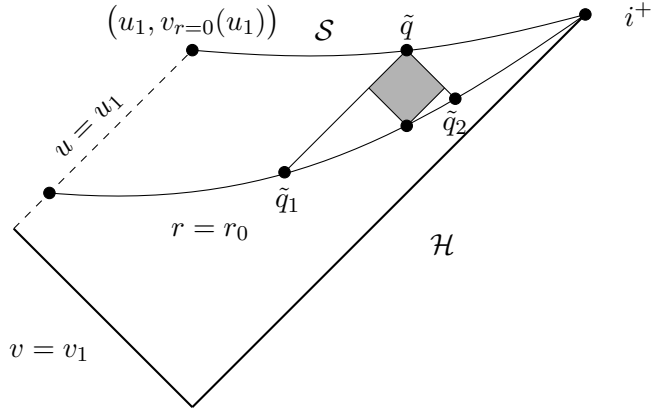
\end{center}

\noindent For any $(u,v)$ in the shadowed diamond region, we have 
\begin{equation}
|r\partial_u r(u,v)-r\partial_u r(u_{\t q}, v_{\t q})|\leq M[M^{-1}r(u,v)]^{\f{1}{100}},
\end{equation}
\begin{equation}
|r\partial_v r(u,v)-r\partial_v r(u_{\t q}, v_{\t q})|\leq M[M^{-1}r(u,v)]^{\f{1}{100}}.
\end{equation}
Now we start to derive estimates for  $r\partial_u r(u_{\t q}, v_{\t q})$ and $r\partial_v r(u_{\t q}, v_{\t q})$. By integrating 
\bes
\partial_u(r\partial_v r+M-\f12 r)=-\f14\O^2-\f12\partial_u r
\ees
we get
\be\label{r partial v r}
\begin{split}
&(r\partial_v r+M-\f12 r)(u,v)\\
=&(r\partial_v r+M-\f12 r)(u_{r_0(v)},v)-\int_{u_{r_0(v)}}^u\f{\O^2}{4\partial_u r}\partial_u r du-\f12\int^u_{u_{r_0(v)}}\partial_u r du\\
=&(r\partial_v r+M-\f12 r)(u_{r_0(v)},v)-\int^u_{u_{r_0(v)}}\bigg(\f{\O^2}{4\partial_u r}+\f12\bigg) \partial_u r du.
\end{split}
\ee
To obtain a precise bound for the integration term, we first estimate $\f{\O^2}{-\partial_u r}$. We change the form of one of the Raychauduri's equations, from 
$$\partial_u\bigg(\f{\partial_u r}{\O^2}\bigg)=-r\f{(\partial_u \phi)^2}{\O^2}$$
into
$$\partial_u \bigg(\log\f{\O^2}{-\partial_u r}\bigg)=\f{r}{\partial_u r}(\partial_u \phi)^2.$$
Via integration, we have
\bes
\begin{split}
\log\f{\O^2(u,v)}{-\partial_u r(u,v)}=&\log\f{\O^2(u_{r_0(v)},v)}{-\partial_u r(u_{r_0(v)},v)}+\int_{u_{r_0(v)}}^{u}\f{r (\partial_u \phi)^2}{(\partial_u r)^2}\partial_u r \,du .\\
\end{split}
\ees
This gives 
\bes
\begin{split}
\f{\O^2(u,v)}{-\partial_u r(u,v)}=&\f{\O^2(u_{r_0(v)},v)}{-\partial_u r(u_{r_0(v)},v)}\cdot \exp\bigg(\int_{r_0(v)}^{r(u,v)}\f{r^3 (\partial_u \phi)^2}{(r\partial_u r)^2}dr  \bigg).\\
\end{split}
\ees
{\color{black} By Proposition \ref{prop:metricest}, we have} 
$$\left|- \Omega^{-2}\partial_u r-\frac{1}{2}\right|(u_{r_0(v)},v)\leq C(\Delta_1,D_2,D_3,v_0) (M^{-1}v)^{-2p+1}.$$
For $p>1$ and $v$ sufficiently large, we hence have
$$\f{\O^2(u_{r_0(v)},v)}{4\partial_u r(u_{r_0(v)},v)}=-\f12+s(u,v)\cdot (M^{-1}v)^{-p}, \quad \mbox{ where } 0\leq |s(u,v)|\ll 1.$$
Using Proposition \ref{phi} and Proposition \ref{v u global}, for $r(u,v)\leq r_0$ and $v$ sufficiently large, there exists $l(u,v)$ satisfying
$0\leq|l(u,v)|\lesssim 1$ and it holds that
$$\exp\bigg(\int_{r_0, v)}^{r(u,v)}\f{r^3 (\partial_u \phi)^2}{(r\partial_u r)^2}dr  \bigg)= \exp\bigg({2\t D_2^2\cdot l(u,v)\cdot (M^{-1}v)^{-2p}}\cdot\ln\f{r(u,v)}{r_0}\bigg)=\bigg(\f{r(u,v)}{r_0}\bigg)^{{2\t D_1^2\cdot l(u,v)\cdot (M^{-1}v)^{-2p}}}.$$
Therefore,
\bes\begin{split} 
&\f{\O^2(u,v)}{4\partial_u r(u,v)}+\f12=\f{\O^2(u_{r_0(v)},v)}{4\partial_u r(u_{r_0(v)},v)}\cdot \exp\bigg(\int_{r_0(v)}^{r(u,v)}\f{r^3 (\partial_u \phi)^2}{(r\partial_u r)^2}dr  \bigg)+\f12\\
=&\bigg(-\f12+s(u,v)\cdot (M^{-1}v)^{-p}\bigg)\cdot \exp\bigg(\int_{r_0(v)}^{r(u,v)}\f{r^3 (\partial_u \phi)^2}{(r\partial_u r)^2}dr  \bigg)+\f12\\
=&\bigg(-\f12+s(u,v)\cdot (M^{-1}v)^{-p}\bigg)\cdot [\exp\bigg(\int_{r_0(v)}^{r(u,v)}\f{r^3 (\partial_u \phi)^2}{(r\partial_u r)^2}dr  \bigg)-1]-\f12+s(u,v)\cdot (M^{-1}v)^{-p}+\f12\\
=&\bigg(-\f12+s(u,v)\cdot (M^{-1}v)^{-p}\bigg)\cdot [\bigg(\f{r(u,v)}{r_0}\bigg)^{{2\t D_1^2\cdot l(u,v)\cdot (M^{-1}v)^{-2p}}}-1]+s(u,v)\cdot (M^{-1}v)^{-p}.
\end{split}\ees
Hence for the integration term in (\ref{r partial v r}), we have
\bes\begin{split}   
&|\int^u_{u_{r_0(v)}}\bigg(\f{\O^2}{4\partial_u r}+\f12\bigg) \partial_u r du|\\
=&|\int^{r(u,v)}_{r_0}\bigg(\f{\O^2}{4\partial_u r}+\f12\bigg) dr |\\
\leq&|\int^{r(u,v)}_{r_0} \big(-\f12+s(u',v)\cdot (M^{-1}v)^{-p}\big)\cdot[\bigg(\f{r(u',v)}{r_0}\bigg)^{{2\t D_1^2\cdot l(u,v)\cdot(M^{-1}v)^{-2p}}}-1] dr\,|\\
&+|\int^{r(u,v)}_{r_0} s(u',v)\cdot (M^{-1}v)^{-p} dr\,|\\
\leq&|\int^{r(u,v)}_{r_0} [\bigg(\f{r(u',v)}{r_0}\bigg)^{{2\t D_1^2\cdot l(u',v)\cdot (M^{-1}v)^{-2p}}}-1] dr\,|\\
&+\sup_{u_{r_0}(v)\leq u'\leq u}|s(u',v)|\cdot (M^{-1}v)^{-p}\cdot [r_0-r(u,v)].
\end{split}\ees

\noindent Denote $x=\f{r(u',v)}{r_0}$. And notice 
$$|{2\t D_1^2 \cdot l(u,v)\cdot (M^{-1}v)^{-2p}}|\leq {3\t D_1^2 \cdot (M^{-1}v)^{-2p}}.$$
we hence deduce
\bes\begin{split}   
&|\int^{r(u,v)}_{r_0} [\bigg(\f{r(u',v)}{r_0}\bigg)^{{2\t D^2_1\cdot l(u',v)\cdot (M^{-1}v)^{-2p}}}-1] dr|\\
\leq &\int^{r(u,v)}_{r_0} [\bigg(\f{r(u',v)}{r_0}\bigg)^{{-3\t D^2_1\cdot (M^{-1}v)^{-2p}}}-1] dr+\int^{r(u,v)}_{r_0} [-\bigg(\f{r(u',v)}{r_0}\bigg)^{{3\t D_1^2\cdot (M^{-1}v)^{-2p}}}+1] dr\\
=&r_0\int^{\f{r(u,v)}{r_0}}_{1} [x^{{-3\t D_1^2\cdot (M^{-1}v)^{-2p}}}-1] dx-r_0\int^{\f{r(u,v)}{r_0}}_{1} [x^{{3\t D_1^2\cdot (M^{-1}v)^{-2p}}}-1] dx\\
=&r_0\cdot \bigg(\f{x^{1-3\t D_1^2\cdot (M^{-1}v)^{-2p}}}{1-3\t D_1^2\cdot (M^{-1}v)^{-2p}} \vert^{x=\f{r(u,v)}{r_0}}_{x=1}  \bigg)-r(u,v)\\
&+r_0-r_0\cdot \bigg(\f{x^{1+3\t D_1^2\cdot (M^{-1}v)^{-2p}}}{1+3\t D^2_1\cdot (M^{-1}v)^{-2p}}\vert^{x=\f{r(u,v)}{r_0}}_{x=1}  \bigg)+r(u,v)-r_0\\
=&r_0\cdot \bigg(\f{[\f{r(u,v)}{r_0}]^{1-3\t D_1^2\cdot (M^{-1}v)^{-2p}}}{1-3\t D_1^2\cdot (M^{-1}v)^{-2p}}  \bigg)-r_0\cdot \f{1}{1-3\t D_1^2\cdot (M^{-1}v)^{-2p}}\\
&-r_0\cdot \bigg(\f{[\f{r(u,v)}{r_0}]^{1+3\t D_1^2\cdot (M^{-1}v)^{-2p}}}{1+3\t D_1^2\cdot (M^{-1}v)^{-2p}}  \bigg)+r_0\cdot \f{1}{1+3\t D_1^2\cdot (M^{-1}v)^{-2p}}\\
=&r^{\f12}\cdot r_0^{\f12}\cdot \bigg(\f{[\f{r(u,v)}{r_0}]^{\f12-3\t D_1^2\cdot (M^{-1}v)^{-2p}}}{1{-3\t D_1^2\cdot (M^{-1}v)^{-2p}}}  \bigg)+r^{\f12}\cdot r_0^{\f12}\cdot \bigg(\f{[\f{r(u,v)}{r_0}]^{\f12+3\t D_1^2\cdot (M^{-1}v)^{-2p}}}{1+{3\t D_1^2\cdot (M^{-1}v)^{-2p}}}  \bigg)\\
&+r_0\cdot \f{-6\t D_1^2\cdot (M^{-1}v)^{-2p}}{(1{-3\t D_1^2\cdot (M^{-1}v)^{-2p}})(1+{3\t D_1^2\cdot (M^{-1}v)^{-2p}})}\\
\leq&2r_0^{\f12}\cdot r(u,v)^{\f12}+7r_0\t D_1^2(M^{-1}v)^{-2p}.
\end{split}\ees
For the first inequality above, we decompose $l(u,v)$ into negative parts and non-negative part. 

\noindent Back to (\ref{r partial v r}), with Proposition \ref{prop:metricest} 
$$\left|r \partial_vr-M+\frac{1}{2}r\right|(u_{r_0(v)},v)\leq C(\Delta_1,D_2,D_3,v_0) M (M^{-1}v)^{-2p+1},$$
we thus obtain  
\bes
\begin{split}
&|(r\partial_v r+M-\f12 r)(u,v)|\\
=&|(r\partial_v r+M-\f12 r)(u_{r_0(v)},v)-\int^u_{u_{r_0(v)}}\bigg(\f{\O^2}{4\partial_u r}+\f12\bigg) \partial_u r du|\\
\leq &C(\Delta_1,D_2,D_3,v_0) M (M^{-1}v)^{-2p+1}+2r_0^{\f12}\cdot r^{\f12}+(r_0+7r_0 \t D_1^2)\cdot (M^{-1}v)^{-2p}.
\end{split}
\ees
For $\t q=(u_{\t q}, v_{\t q})\in \mathcal{S}$, we have $r(u_{\t q}, v_{\t q})=0$. For $p>1$ and $v$ being sufficiently large, the above inequality gives
$$|(r\partial_v r)(u_{\t q}, v_{\t q})+M|\leq \f{M(M^{-1}v_{\t q})^{-p}}{2}.$$
Therefore, (\ref{d 1.12}) is proved. In the same fashion, we also obtain (\ref{d 1.11}).
\end{proof}

\appendix

\section{Basic estimates}
\label{app:basic}
\begin{proof}[Proof of Lemma \ref{lm:expint}]
We apply the fundamental theorem of calculus to obtain
\begin{equation*}
\begin{split}
\int_a^{\infty} e^{- \alpha x} f(x)\,dx\leq &\: B_2\int_a^{\infty} e^{- \alpha x} x^{-p_2} \,dx\\
=&\: -\alpha^{-1}B_2 \int_a^{\infty} \frac{d}{dx}\left(e^{- \alpha x} x^{-p_2}\right) \,dx-p_2\alpha^{-1} B_2 \int_a^{\infty}e^{- \alpha x} x^{-p_2-1} \,dx\\
\leq &\: \alpha^{-1} B_2 e^{- \alpha a} a^{-p_2}.
\end{split}
\end{equation*}

We estimate similarly 
\begin{equation*}
\begin{split}
\int_a^b e^{\alpha x} f(x)\,dx\leq &\: B_2\int_a^b e^{\alpha x} x^{-p} \,dx\\
=&\: \alpha^{-1}B_2 \int_a^b \frac{d}{dx}\left(e^{\alpha x} x^{-p_2}\right) \,dx+p_2\alpha^{-1} B_2 \int_a^be^{\alpha x} x^{-p_2-1} \,dx\\
\leq &\: \alpha^{-1} (1+ b^{-1} p_2 \alpha^{-1})B_2 e^{\alpha b} b^{-p_2}.
\end{split}
\end{equation*}
\end{proof}

\begin{proposition}[Reverse Gr\"onwall inequality]
\label{prop:revgron}
{\color{black}Let $t_0\in \R$ and let $\psi,\beta: [t_0,\infty)\to \R$ be positive continuous functions. Let $A>0$ be a positive constant and assume that $\p(t)$ satisfies:}
\be \label{appendix 1}
\p(t)\geq A+\int_{t_0}^t \b(s)\p(s)ds \quad \mbox{ for any } \quad t\geq t_0.
\ee
Then
$$\p(t)\geq A\cdot e^{\int_{t_0}^t \b(s)ds}.$$
\end{proposition}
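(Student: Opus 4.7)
\medskip

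\noindent\textbf{Proof plan.} The plan is to mimic the classical Gr\"onwall argument, using the positivity of $\beta$ to turn the integral lower bound for $\psi$ into a differential lower bound for the auxiliary quantity
\[
\Phi(t) := A + \int_{t_0}^{t} \beta(s)\,\psi(s)\,ds.
\]
By hypothesis, $\psi(t) \geq \Phi(t)$ for every $t \geq t_0$, and $\Phi$ is continuously differentiable in $t$ (since $\beta$ and $\psi$ are continuous) with $\Phi(t_0) = A$.

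Next I would differentiate: $\Phi'(t) = \beta(t)\,\psi(t)$. Since $\beta(t) > 0$ and $\psi(t) \geq \Phi(t) \geq A > 0$, this yields the pointwise inequality
\[
\Phi'(t) \geq \beta(t)\,\Phi(t), \qquad t \geq t_0.
\]
Because $\Phi(t) > 0$ throughout (it starts at $A > 0$ and is nondecreasing, as $\beta\psi > 0$), we may divide to obtain
\[
\frac{d}{dt}\log\Phi(t) = \frac{\Phi'(t)}{\Phi(t)} \geq \beta(t).
\]

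Finally, I would integrate this differential inequality from $t_0$ to $t$, giving $\log\Phi(t) - \log A \geq \int_{t_0}^{t}\beta(s)\,ds$, i.e.\ $\Phi(t) \geq A\cdot e^{\int_{t_0}^{t}\beta(s)\,ds}$. Combining with $\psi(t) \geq \Phi(t)$ yields the desired bound. There is no real obstacle: the only point to be mindful of is justifying that $\Phi$ stays strictly positive so the logarithm is admissible, which is immediate from $\Phi(t_0) = A > 0$ together with $\Phi' = \beta\psi \geq 0$.
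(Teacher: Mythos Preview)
Your argument is correct and essentially identical to the paper's: both introduce the auxiliary function $\Phi(t)=A+\int_{t_0}^t\beta(s)\psi(s)\,ds$, derive $\Phi'\geq\beta\Phi$, and integrate. The only cosmetic difference is that the paper multiplies by the integrating factor $e^{-\int\beta}$ and shows the product is nondecreasing, whereas you divide by $\Phi$ and integrate $(\log\Phi)'\geq\beta$; these are equivalent manipulations.
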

\begin{proof}
Let 
$$N(t):=A+\int_{t_0}^t \b(s)\p(s)ds.$$
Taking derivatives with respect to $t$ on both sides and using (\ref{appendix 1}), we have
$$N'(t)=\b(t)\p(t)\geq\b(t)\cdot N(t).$$
This gives
$$N'(t)-\b(t)N(t)\geq 0,$$
which is equivalent to
$$\bigg(e^{\int_{t_0}^t -\b(s)ds}N(t)\bigg)'=e^{\int_{t_0}^t -\b(s)ds}\cdot N'(t)-e^{\int_{t_0}^t -\b(s)ds}\cdot\b(t)\cdot N(t)\geq 0.$$
For $t\geq t_0$, we hence have
$$e^{\int_{t_0}^t -\b(s)ds}N(t)\geq e^{\int_{t_0}^{t_0} -\b(s)ds}N(t_0)=1\cdot A=A.$$
Together with (\ref{appendix 1}), this gives
$$\p(t)\geq N(t)\geq A\cdot e^{\int_{t_0}^t \b(s)ds}.$$
\end{proof}

\bibliographystyle{plain}
\bibliography{bibliography}

\end{document}